\newtheorem{thm}{Theorem}[section]
\newtheorem{cor}[thm]{Corollary}
\newtheorem{lem}[thm]{Lemma}
\newtheorem{prop}[thm]{Proposition}
\theoremstyle{definition}
\newtheorem{defn}[thm]{Definition}
\theoremstyle{remark}
\newtheorem{rem}[thm]{Remark}
\numberwithin{equation}{section}
\newcommand{\R}{\mathbb R}
\newcommand{\be}{\begin{equation}}
\newcommand{\ee}{\end{equation}}
\newcommand{\eps}{\epsilon}
\newcommand{\p}{\partial}
\begin{document}

\title{Global solutions to nonlinear two-phase free boundary problems}

\author{D. De Silva}
\address{Department of Mathematics, Barnard College, Columbia University, New York, NY 10027, USA}
\email{\tt  desilva@math.columbia.edu}
\author{O. Savin}
\address{Department of Mathematics, Columbia University, New York, NY 10027, USA}\email{\tt  savin@math.columbia.edu}
\thanks{O.~S.~is supported by  NSF grant DMS-1200701.}

\begin{abstract}We classify global Lipschitz solutions to two-phase free boundary problems governed by concave fully nonlinear equations, as either two-plane solutions or solutions to  a one-phase problem. \end{abstract}

\maketitle
\section{Introduction}

The classical two-phase free boundary problem studies critical functions $u$ of the energy
$$J(u)=\int_\Omega |\nabla u|^2 + \mathcal H^n(\chi_{\{u>0\}}) \, dx,$$
which appears in various models of fluid mechanics or heat conduction (see for example \cite{ACF, LW}).

The critical functions are harmonic except on their $0$ level set where a jump condition on $\nabla u$ is imposed due to the presence of the discontinuous second term in the energy. Precisely, the Euler-Lagrange equation reads
\begin{equation}\label{2f0}
\left\{
\begin{array}{ll}
\triangle u=0, & \hbox{in $\{u \ne 0 \}$} \\
\  &  \\
(u_{\nu }^{+})^2- (u_\nu^-)^2=1, &
\hbox{on $\Gamma(u):= \{u=0\}.$} \\
\end{array}
\right. 
\end{equation}
Here $u_{\nu }^{+}$ and $u_{\nu }^{-}$ denote the normal derivatives in the
inward direction to the positivity sets of $u^+$ and $u^-$ respectively, and $\Gamma(u)$ is the \emph{free boundary} of $u$.

The Lipschitz continuity of solutions to the free boundary problem \eqref{2f0} was obtained by Alt, Caffarelli and Friedman in \cite{ACF}. They discovered a monotonicity formula relating two nonnegative harmonic functions $u^\pm$ defined in disjoint domains $\Omega^\pm$ in say $B_2$, that vanish on the boundary $\p \Omega^\pm \cap B_1$. Precisely, the formula states that
$$\Phi(r):= \left( \frac{1}{r^2} \int_{B_r} \frac{|\nabla u^+|^2}{|x|^{n-2}} dx\right) ^\frac 12   \left( \frac{1}{r^2} \int_{B_r} \frac{|\nabla u^-|^2}{|x|^{n-2}} dx\right) ^ \frac 12$$
is monotone increasing in $r$, and $\Phi$ is constant if and only if $u^+$ and $u^-$ are linear functions. In the context of \eqref{2f0},  this formula implies that the energy at scale 1 bounds the product of the slopes at a free boundary point $0 \in \Gamma(u),$
$$ \Phi(1) \ge \Phi(0+) \, \simeq \, u_\nu^+(0) \cdot u_\nu^-(0),$$ 
and then it follows from the free boundary condition  that $u^+_\nu(0)$ is bounded above. 

The Lipschitz continuity of solutions to \eqref{2f0} is a crucial ingredient also in the study of the regularity of $\Gamma(u)$. Indeed,  after a blow-up analysis, the question of the regularity of $\Gamma(u)$ can be reduced to the classification of global Lipschitz solutions (see Caffarelli \cite{C2}). Another consequence of the ACF monotonicity formula (by letting $ r \to \infty$) is the classification of global ``purely two-phase" solutions (i.e. with $u^- \not \equiv 0$) as the trivial two-plane solutions $$p_{a,b}:=a (x\cdot \nu)^+ - b (x\cdot \nu)^-, \quad \quad a^2-b^2=1, \quad \quad a, b >0,$$
for some unit direction $\nu$. This implies that the only types of singularities for the free boundary $\Gamma(u)$ are the ones that occur in the one-phase setting when $u^- \equiv 0$.

The ACF monotonicity formula has been extensively used in various other contexts, however it is specific to the Laplace operator (see also \cite{CJK, MP}). In \cite{DS} and \cite{CDS} we investigated the questions of Lipschitz regularity of solutions for two-phase free boundary problems governed by fully nonlinear operators $\mathcal F(D^2u)$ and with a general isotropic free boundary condition $$u_\nu^+=G(u_\nu^-).$$

In \cite{DS} we obtained the Lipschitz continuity of solutions under the assumption that $G(t)$ behaves like $t$ for all $t$ large, and this condition is satisfied for example in the two-phase problem \eqref{2f0}. If in addition $\mathcal F$ is homogenous, it suffices to require $G(t)/t \to c_0$ as $t \to \infty$ for some constant $c_0$.

It turns out that in dimension $n=2$ the results can be improved significantly. This was shown by the authors in collaboration with Caffarelli in \cite{CDS} where
the Lipschitz continuity and the classification of global ``purely two-phase" solutions were obtained for linear equations with measurable coefficients under very general free boundary conditions $u_\nu^+=G(u_\nu^-,\nu,x)$.

In this paper we continue the study of the classification of global Lipschitz two-phase solutions in dimension $n \ge 3$. As mentioned above this problem is intimately connected to the regularity of the free boundary $\Gamma(u)$. In the nonlinear case the $C^{1,\alpha}$ regularity of $\Gamma(u)$ under perturbative assumptions was obtained by  several authors, in slightly different settings (see for example \cite{DFS, F1, Fe1,W1, W2}.)

We consider the two-phase free boundary problem governed by a fully nonlinear uniformly elliptic operator  $\mathcal{F}$
\begin{equation}
\left\{
\begin{array}{ll}
\mathcal{F}(D^{2}u)=0, & \hbox{in $B_1^+(u) \cup B_1^-(u),$} \\
\  &  \\
u_{\nu }^{+}=G(u_{\nu }^{-}), &
\hbox{on $\Gamma(u):= \partial
B^+(u) \cap B_1.$} \\
\end{array}
\right.  \label{fb}
\end{equation}%
Here $B_r \subset \mathbb{R}^{n}$ denotes the ball of radius $r$ centered at $0$ and 
\begin{equation*}
B_1^{+}(u):=\{x\in B_1 :u(x)>0\},\quad B_1^{-}(u):=\{x\in B_1
:u(x)\leq 0\}^{\circ },
\end{equation*}%
while as noted above, $u_{\nu }^{+}$ and $u_{\nu }^{-}$ denote the normal derivatives in the
inward direction to $B_1^{+}(u)$ and $B_1^{-}(u)$ respectively. 
The function $G: \R^+  \to \R^+$ is of class $C^1$ and it satisfies the usual ellipticity assumption, that is $G(t)$ is strictly increasing.
%\be\label{G_e}\mbox{ $G(t)$ is strictly increasing.}\ee
%It is assumed that $G \in C^1([0,\infty))$ and $G'$ has a universal modulus of continuity $\omega.$

Our main result is a Liouville theorem for global ``two-phase" Lipschitz solutions. 

\begin{thm}\label {T2}
Let $u$ be a Lipschitz continuous viscosity solution to \eqref{fb} in $\R^n$.
Assume that \be\label{F} \text{$\mathcal F$ is concave (or convex) and $\mathcal F$ is homogeneous of degree 1.}
\ee

Then either $u$ is a two plane-solution 
\begin{equation}\label{2p}
u=a ((x-x_0)\cdot \nu)^+ - b ((x-x_0) \cdot \nu)^- \quad \quad \mbox{with} \quad a, b>0,  a=G(b), 
\end{equation}
or 
\begin{equation}\label{1p}
u^- \equiv 0,
\end{equation} 
which means that $u$ solves the one-phase problem for $\mathcal F$.
\end{thm}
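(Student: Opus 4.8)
The plan is to combine a monotonicity-type argument with the homogeneity and concavity of $\mathcal F$. Since the classical ACF formula is unavailable for $\mathcal F(D^2u)$, the idea is to replace it by a dimensional-reduction / blow-down argument at a fixed free boundary point, say $0\in\Gamma(u)$, assuming $u^-\not\equiv 0$, and to show the blow-down is a two-plane solution; a separate rigidity step then propagates this back to $u$ itself. First I would record the basic properties of Lipschitz solutions established in \cite{DS}: $u$ is globally Lipschitz, both $u^+$ and $u^-$ are $\mathcal F$-caloric (resp.\ $\mathcal F$-harmonic) and nondegenerate near $\Gamma(u)$, so along any sequence $R_k\to\infty$ the rescalings $u_{R_k}(x):=R_k^{-1}u(R_kx)$ converge (in $C_{loc}$, with Hausdorff convergence of free boundaries) to a global Lipschitz solution $u_\infty$ of the same problem. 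Homogeneity of degree $1$ of $\mathcal F$ is exactly what makes each $u_{R_k}$ solve the same equation with the same $G$.

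The second step is to show $u_\infty$ is a \emph{cone}, i.e.\ homogeneous of degree $1$. Here I would use a Weiss-type monotonicity formula adapted to $\mathcal F(D^2u)=0$: for concave (or convex) $\mathcal F$ homogeneous of degree $1$, one can linearize $\mathcal F(D^2u)=\mathrm{tr}(A(x)D^2u)=0$ with $A(x)$ measurable uniformly elliptic, and on the positive and negative phases separately the functions $u^\pm$ are solutions of such equations; the homogeneity of $\mathcal F$ guarantees the associated ``energy'' $W(r)=r^{-n}\int_{B_r}(\dots)-r^{-n-1}\int_{\partial B_r}u^2$ is monotone in $r$ (this is the place where concavity enters, via the comparison with extremal Pucci operators and the scaling structure). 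Monotonicity of $W$ along the blow-down sequence forces $u_\infty$ to be $1$-homogeneous. Alternatively, and perhaps more robustly, I would argue by a sliding/comparison method: translates $u(x+te)$ can be compared to $u$ using that the free boundary condition is preserved under the ordering of normal derivatives (ellipticity: $G$ increasing), yielding that $u_\infty$ is invariant in the directions of its own free boundary, hence a cone over a lower-dimensional solution.

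The third step is the classification of $1$-homogeneous global two-phase solutions. A $1$-homogeneous Lipschitz function $u_\infty$ that is $\mathcal F$-harmonic in $\{u_\infty>0\}\cup\{u_\infty<0\}^\circ$ and vanishes continuously on $\Gamma(u_\infty)$: restricting to the unit sphere, $\{u_\infty>0\}$ and $\{u_\infty<0\}$ are open subsets of $S^{n-1}$, and the two-phase condition $u_\nu^+=G(u_\nu^-)>0$ on $\Gamma$ says both phases are genuinely present along the whole free boundary. I would show, using the nondegeneracy and the strong maximum principle together with the concavity of $\mathcal F$ (which gives that $u_\infty$ itself, not just $u_\infty^\pm$, is a subsolution or supersolution in a suitable sense, cf.\ \cite{CDS}), that $\Gamma(u_\infty)$ is a hyperplane through $0$ and $u_\infty=a(x\cdot\nu)^+-b(x\cdot\nu)^-$ with $a=G(b)$, $a,b>0$ — i.e.\ a two-plane solution. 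The main obstacle I expect here is ruling out ``conical'' free boundaries with a vertex, e.g.\ showing $\{u_\infty>0\}\cap S^{n-1}$ cannot be a spherical cap strictly smaller than a hemisphere on both sides; this is where the first eigenvalue of the spherical Laplacian (via the homogeneity exponent being exactly $1$) pins down the geometry, and the free boundary condition $u_\nu^+=G(u_\nu^-)$ with $G$ increasing forces the two caps to be complementary hemispheres.

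Finally I would upgrade from the blow-down to $u$: once the blow-down at $0$ is the two-plane $p_{a,b}$, the free boundary $\Gamma(u)$ is asymptotically flat at infinity, and a standard improvement-of-flatness / Liouville argument (as in \cite{DFS}) — again using concavity of $\mathcal F$ to get $C^{1,\alpha}$ estimates on $\Gamma(u)$ once it is flat — shows $u\equiv p_{a,b}$ globally, so we are in case \eqref{2p}. If instead $u^-\equiv 0$ we are in case \eqref{1p} and there is nothing to prove. The hardest single step is the homogeneity of the blow-down (the monotonicity formula for $\mathcal F(D^2u)=0$), since the natural Weiss energy is not obviously monotone without the degree-$1$ homogeneity and the one-sided concavity; getting the sign right there is the crux of the argument.
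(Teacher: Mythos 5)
Your approach is fundamentally different from the paper's, and it hinges precisely on the tool that is unavailable here. The crux of your proposal is Step 2: obtaining $1$-homogeneity of the blow-down via a Weiss-type monotonicity formula. But no Weiss formula (nor ACF-type formula) is known for fully nonlinear operators $\mathcal F(D^2u)$, even concave and $1$-homogeneous ones; the linearization $\operatorname{tr}(A(x)D^2u)=0$ has merely measurable coefficients $A(x)$, and the standard Weiss derivative computation (which requires the divergence structure $\operatorname{div}(a\nabla u)$) does not close. You flag this yourself as ``the crux,'' but a plan whose crux is an unproved and, as far as anyone knows, false formula is not a proof. Your fallback (sliding translates to get invariance) is not a substitute: comparison of $u(\cdot+te)$ with $u$ via the increasing free boundary condition does not yield translation invariance of the blow-down in any direction without additional structure. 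And even granting homogeneity, Step 3 leans on spherical eigenvalue arguments (``first eigenvalue of the spherical Laplacian pins down the geometry'') that are specific to $\triangle$ and have no counterpart for a general concave $\mathcal F$; the classification of $1$-homogeneous global solutions of the fully nonlinear transmission problem is itself a nontrivial question, not a black box.

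The paper's actual route sidesteps monotonicity entirely. It first shows (Lemma 3.1) that the suprema $a=\sup_{\{u>0\}}|\nabla u|$, $b=\sup_{\{u<0\}}|\nabla u|$ must satisfy $a=G(b)$ and that a suitable sequence of rescalings near an ``almost maximizing'' point converges to $ax_n^+-bx_n^-$; this uses only the strong maximum principle for $u_n\in\mathcal S(\lambda,\Lambda)$ and a barrier argument, not any energy monotonicity. After normalizing $a=b=1$, the heart of the proof is a \emph{reversed} improvement of flatness driven by the Nonlinear Dichotomy (Proposition \ref{Nonlinear_intro}): either the quadratic approximation to $u$ improves in a $C^{2,\alpha}$ fashion at a smaller scale, or $|\nabla u^+(0)|$ drops a definite amount below the maximum $1$. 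Iterating, combined with the Hopf-type linear decay of $1-u_n$ away from $\Gamma(u)$ (inequality \eqref{help}), forces a contradiction unless $u_n\equiv 1$ on the ray, and then Corollary \ref{eeta} plus the forward improvement of flatness of Proposition \ref{c1} gives $u$ equal to the two-plane at all large scales. The dichotomy itself rests on a weak Evans--Krylov-type estimate for the linearized transmission problem (Proposition \ref{dic}, Lemma \ref{global_lemma}), which is where concavity and $1$-homogeneity are really used — not in a monotonicity formula but in second-difference subsolution estimates and a rigidity lemma for global transmission solutions that are monotone in $x_n$. You would need to replace your Weiss-formula step with something of that nature for the argument to have a chance.
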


Theorem \ref{T2} can be extended to more general operators $\mathcal F(D^2 u,\nabla u , u)$ that depend also on $\nabla u$ and $u$ if appropriate assumptions are imposed on $\mathcal F$. Here we only state a version that applies to the $p$-Laplace equation. We consider quasilinear equations of the type
\be\label{pL}
a_{ij}\left(\frac{\nabla u}{|\nabla u|} \right) \, \, u_{ij}=0,\ee
with uniformly elliptic coefficients $a_{ij} \in C^1(S^{n-1})$. We remark that in \cite{DS} we established also the Lipschitz continuity of solutions to the two-phase free boundary problem governed by \eqref{pL} (see also the paper of Dipierro and Kharakhian \cite{DK} for the case of minimizers in the $p$-Laplace equation). 

\begin{thm}\label {T2.1}
The conclusion of Theorem $\ref{T2}$ holds if $u$ is a global Lipschitz continuous viscosity solution to \eqref{fb} governed by equation \eqref{pL} instead of $\mathcal F$.
\end{thm}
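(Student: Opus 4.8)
The plan is to reduce Theorem~\ref{T2.1} to Theorem~\ref{T2}, or rather to run the same proof scheme with the quasilinear operator in place of the fully nonlinear one. The equation \eqref{pL} is homogeneous of degree one in $D^2u$ and, since the coefficients $a_{ij}(\nabla u/|\nabla u|)$ depend only on the direction of the gradient, it is invariant under the scaling $u\mapsto u(rx)/r$ that preserves the free boundary condition; this is exactly the structural feature that Theorem~\ref{T2} exploits. First I would record the properties of \eqref{pL} that were actually used: uniform ellipticity (so that the operator is trapped between the Pucci extremal operators $\mathcal M^\pm$), the $1$-homogeneity, and the translation invariance in $x$. The one subtlety is that \eqref{pL} is only defined where $\nabla u\ne 0$; away from critical points it is a uniformly elliptic equation with $C^{\alpha}$ coefficients, so interior $C^{1,\alpha}$ estimates hold on $B_1^\pm(u)$, and on the free boundary the one-sided normal derivatives $u^\pm_\nu$ used in \eqref{fb} make sense because Lipschitz solutions that do not vanish identically on one side have nontrivial gradient near the free boundary.

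The core of the argument should parallel the proof of Theorem~\ref{T2}. I expect the proof of Theorem~\ref{T2} to proceed by: (i) a blow-down/blow-up analysis using the Lipschitz bound and the homogeneity to produce global solutions with extra structure (e.g. homogeneous of degree one, or with a monotonicity/flatness improvement); (ii) exploiting concavity (or convexity) of $\mathcal F$ to get $C^{1,1}$ or pointwise second-derivative control, pinning down the level sets; (iii) a dichotomy showing that either the negative phase is genuinely present, in which case rigidity forces the two-plane solution $p_{a,b}$ with $a=G(b)$, or $u^-\equiv 0$. For the quasilinear case, wherever the original argument invokes the Evans--Krylov-type $C^{2,\alpha}$ regularity for concave $\mathcal F$, I would instead use the Lipschitz regularity theory of \cite{DS} for \eqref{pL} together with the standard quasilinear $C^{1,\alpha}$ estimates of DiBenedetto/Tolksdorf/Uraltseva type away from $\{\nabla u=0\}$; and wherever concavity is used to pass linear combinations or differences of solutions into subsolutions, I would use instead that \eqref{pL} linearizes: differences of two solutions of \eqref{pL} satisfy a linear uniformly elliptic equation (with measurable coefficients, obtained by the fundamental-theorem-of-calculus trick on the coefficients), which is the substitute for the comparison principle features that concavity provides. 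This is why the case of \eqref{pL} is genuinely covered by the same scheme even though \eqref{pL} is neither concave nor convex as a function of $D^2u$.

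Concretely I would organize the deduction as follows. After the blow-down, one obtains a global Lipschitz solution $u_\infty$ of \eqref{fb} for \eqref{pL} that is homogeneous of degree one; one then classifies such homogeneous solutions — either a two-plane solution, or a one-phase solution — and propagates this back to $u$. The propagation step uses that the "two-phase part" of the free boundary, if nonempty, must have a flat blow-up at which the ellipticity of \eqref{pL} plus the free boundary condition $u^+_\nu=G(u^-_\nu)$ force, via the perturbative $C^{1,\alpha}$ free boundary regularity (e.g.\ \cite{DFS}) adapted to \eqref{pL}, that the two phases are linear near that point; a continuation/connectedness argument then gives \eqref{2p} globally. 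If instead no such two-phase point exists, $u^-\equiv 0$ and we are in case \eqref{1p}.

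The main obstacle will be the regularity input that replaces Evans--Krylov. For a genuinely fully nonlinear concave $\mathcal F$, interior $C^{2,\alpha}$ is available and gives strong control on the level sets; for \eqref{pL} we have no such estimate near $\{\nabla u=0\}$, so I would need to argue that the free boundary stays away from the degeneracy set — i.e.\ that $|\nabla u^\pm|$ is bounded below near $\Gamma(u)$ for a non-trivial two-phase solution — which should follow from the Lipschitz non-degeneracy established in \cite{DS} and the free boundary condition (since $G$ is strictly increasing and $C^1$, $u^+_\nu$ bounded below forces $u^-_\nu$ bounded below and conversely). Once the free boundary is uniformly inside the non-degenerate region, \eqref{pL} is, along the relevant blow-up sequences, as good as a uniformly elliptic equation with $C^\alpha$ coefficients, and the rest of the machinery from the proof of Theorem~\ref{T2} applies essentially verbatim; one then only needs to check that each quantitative estimate used there was stated (in \cite{DS}, \cite{DFS}, or here) in a form that allows variable $C^\alpha$ coefficients rather than constant coefficients, which is the bookkeeping that the formal proof must carry out.
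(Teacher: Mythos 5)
Your broad strategy --- run the proof scheme of Theorem~\ref{T2} with \eqref{pL} in place of $\mathcal F$, using the $1$-homogeneity, ellipticity, and the fact that \eqref{pL} linearizes to a uniformly elliptic equation as a substitute for concavity --- is the right idea and matches the paper in spirit. However, you miss the specific observation that makes the quasilinear case \emph{easier}, not harder, and you focus on an obstacle that the paper's argument does not actually encounter.

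The decisive point in the paper's proof is this: choose the approximating polynomials $P_{M,\nu}=x\cdot\nu+\tfrac12 x^TMx$ with the compatibility conditions $a_{ij}(\nu)m_{ij}=0$ and $M\nu=0$ (your proposal never states what should replace $\mathcal F(M)=0$). Then the $\eps$-linearization of the free boundary problem around $P_{M,\nu}$ produces a transmission problem whose interior operator is the fixed constant-coefficient \emph{linear} operator $\mathcal F^*(D^2u^*)=a_{ij}(\nu)u^*_{ij}$. For a linear transmission problem, Theorem~\ref{lineareg} and Proposition~\ref{dic} are immediate; in fact solutions are $C^\infty$ from each side of $L$, so one gets a genuine quadratic improvement of flatness and Propositions~\ref{Nonlinear_intro} and~\ref{c1} follow more easily than in the concave fully nonlinear case, where Evans--Krylov had to be replaced by the delicate dichotomy of Section~4. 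Your weaker remark that ``differences of two solutions of \eqref{pL} satisfy a linear equation with measurable coefficients'' does not, by itself, supply the $C^{1,\alpha}$ transmission estimates in the form the compactness argument needs.

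Your identified ``main obstacle'' --- the degeneracy of \eqref{pL} where $\nabla u=0$ --- does not arise in the actual argument. All the relevant blow-ups are centered near the free boundary after normalizing so that the slopes are close to $1$; in the linearized limit the coefficients are frozen at the fixed direction $\nu$, so the operator is uniformly elliptic with no degeneracy to worry about. Relatedly, the architecture you sketch for Theorem~\ref{T2} (blow-down to a homogeneous global solution, classify it, propagate by a continuation/connectedness argument) is not the one the paper uses: the proof of Theorem~\ref{T2} is a \emph{reversed} improvement of flatness obtained by iterating the Nonlinear Dichotomy (Proposition~\ref{Nonlinear_intro}) from a small initial scale up to all large scales. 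The adaptation to \eqref{pL} then amounts to checking (a) that Lemma~\ref{uxn} and Corollary~\ref{eeta} carry over (they do, because Lipschitz solutions to \eqref{pL} have interior $C^2$ estimates), and (b) that the compactness step of Propositions~\ref{Nonlinear_intro} and~\ref{c1} yields the linear transmission problem $a_{ij}(\nu)u^*_{ij}=0$. Your proposed continuation argument would require justification not present in either this paper or the cited references.
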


We roughly outline a formal idea of the proof of Theorem \ref{T2}. Notice that $|\nabla u^{+}|$ is a subsolution of the linearized equation in $\{u>0\}$ and then its supremum occurs on the boundary $\Gamma(u)$. Assume for the moment that $\Gamma(u)$ is of class $C^2$ and that  $|\nabla u^{+}|$ achieves its maximum at a point, say at $0 \in \Gamma(u)$. Suppose the normal to $\Gamma(u)$ at 0
(pointing in the positive phase) is $e_n$ and let $\kappa_1, \ldots, \kappa_{n-1}$ be the principal curvatures of $\Gamma(u)$ at $0$.
 Then the mixed derivatives vanish, i.e. $u^+_{in}(0)=0$, $i<n$,
 $$D^2u^{+}(0)=diag(\kappa_1u_n^{+}(0), \ldots, \kappa_{n-1}u^{+}_n(0), u_{nn}^{+}(0))$$
and similarly (in view of the free boundary condition)
 $$D^2u^{-}(0)=diag(-\kappa_1u_n^{-}(0), \ldots, -\kappa_{n-1}u^{-}_n(0), u_{nn}^{-}(0)).$$
Using that $\mathcal F(D^2u^+(0))= \mathcal F(-D^2u^-(0))=0$ and that $\mathcal F$ is homogenous of degree one we have
$$\mathcal F \left(diag \left(\kappa_1, \ldots, \kappa_{n-1}, \frac{u^+_{nn}(0)}{u_n^+(0)}\right) \right)= \mathcal F\left(diag \left(\kappa_1, \ldots, \kappa_{n-1}, -\frac{u^-_{nn}(0)}{u_n^-(0)} \right) \right)=0.$$
which by ellipticity of $\mathcal F$ gives $$\frac{u^+_{nn}(0)}{u_n^+(0)}= -\frac{u^-_{nn}(0)}{u_n^-(0)}.$$
On the other hand, by Hopf lemma applied to $u_n^+$, $u_n^-$ we have $$u_{nn}^{+}(0), u_{nn}^{-}(0) <0,$$ (unless $u_n^+$ and $u_n^-$ are constant) 
and we reach a contradiction.

The rigorous proof requires somewhat involved and technical arguments. One of the main steps consists in obtaining a weak Evans-Krylov type estimate for a nonlinear transmission problem. The assumptions that $\Gamma(u)$ is of class $C^2$ (even if $\mathcal F$ is concave) and that  $|\nabla u^{+}|$ achieves its maximum at a point (rather than at infinity) cannot be justified. One major difficulty is that $\Gamma(u)$ is not known to be better than $C^{1,\alpha}$ even in the perturbative setting.

The idea of proof of Theorem \ref{T2} is to show a ``reversed" improvement of flatness for the solution $u$, which means that if $u$ is sufficiently close to a two plane solution at a small scale then it remains close to the same two-plane solution at all larger scales.
The key ingredient in the proof of our main Theorem \ref{T2} is the Proposition \ref{Nonlinear_intro} below. 

Since $\mathcal F$ is homogeneous of degree one, we can multiply $u^+$ and $u^-$ by suitable constants and assume that $G(1)=1$ and $\|\nabla u\|_{L^\infty}=1$. Denote by $P_{M,\nu}$ quadratic approximations of slope 1 to our free boundary problem \eqref{fb},
\be\label{W_intro}  P_{M,\nu}(x):=x \cdot \nu + \frac 1 2 x^T M x,\ee
with $\nu$ a unit direction and 
$$\text{$M \in S^{n\times n}$ such that $M\nu=0$ and $\mathcal F(M)=0$}.$$
Throughout the paper constants depending on $n,$ the ellipticity constants $\lambda, \Lambda$ of $\mathcal F$, and the modulus of continuity $\omega$ of $G'$ on $[0,2]$, are called universal constants.

Proposition \ref{Nonlinear_intro} is a dichotomy result for solutions $u$ which are $\eps$-perturbations of polynomials $P_{M,\nu}$ at scale 1. It says that either $u$ can be approximated by another polynomial $P_{\bar M, \bar \nu}$ in a $C^{2,\alpha}$ fashion at a smaller scale, or that $|\nabla u(0)|$ has to be strictly below 1 an amount of order $\eps$. 

\begin{prop}[Nonlinear Dichotomy]\label{Nonlinear_intro} Assume that $0 \in F(u)$, $G(1)=1$ and $|\nabla u| \leq 1$.  There exist small universal constants, $\eps_0, \delta_0, r_0, c_0, \alpha_0 >0$ such that if 
\be \label{flat1}
|u(x) - P_{M, e_n}(x)|  \leq \eps \quad \text{in $B_1$}, \quad \eps \leq \eps_0
\ee
with
\be\label{hyp}
\|M\| \leq \delta_0 \eps^{1/2}, 
\ee
then one of the following alternatives holds:

(i) \be\label{flat2} |u - P_{\bar M, \nu}| \leq \eps r_0^{2+\alpha_0}, \quad \text{in $B_{r_0}$}\ee
for some $\bar M, \nu$ with $$\|M-\bar M\| \leq C \eps$$ and $C$ universal;

(ii)\be\label{flat3} |\nabla u^+(0)| \leq 1-c_0 \eps.\ee
\end{prop}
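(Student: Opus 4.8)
The plan is to follow a standard compactness/improvement-of-flatness scheme, but run \emph{backwards} in scale, and to extract the dichotomy from a linearized transmission problem. Suppose the statement fails: then there is a sequence $u_k$ solving \eqref{fb} with $0 \in F(u_k)$, $G(1)=1$, $|\nabla u_k| \le 1$, satisfying $|u_k - P_{M_k,e_n}| \le \eps_k$ in $B_1$ with $\eps_k \to 0$ and $\|M_k\| \le \delta_0 \eps_k^{1/2}$, for which \emph{neither} (i) \emph{nor} (ii) holds. Normalize the positive and negative phases by setting
$$
\tilde u_k^\pm(x) := \frac{u_k^\pm(x) - (x\cdot e_n)^\pm - \tfrac12 x^T M_k x \, \chi_{\{\pm u_k>0\}}}{\eps_k}.
$$
Because $\|M_k\| = o(\eps_k^{1/2})$, the quadratic correction is $o(\eps_k^{1/2}) = o(\eps_k)$ only on balls of size $O(1)$ — this is exactly why the hypothesis \eqref{hyp} is scaled at $\eps^{1/2}$ rather than $\eps$ — so the $\tilde u_k^\pm$ stay bounded. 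The free boundaries $F(u_k)$ are trapped in a slab of width $O(\eps_k)$ around $\{x_n=0\}$, so after passing to a subsequence, $\tilde u_k^+ \to \tilde u^+$ on $\{x_n>0\}$ and $\tilde u_k^- \to \tilde u^-$ on $\{x_n<0\}$ locally uniformly.

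The heart of the argument is to identify the limiting linearized problem. In each phase, $u_k^\pm$ solves $\mathcal F(D^2 u_k^\pm)=0$; since $D^2 P_{M_k,e_n} = M_k \to 0$ and $\mathcal F(0)=0$ (by homogeneity of degree one), the increments $\tilde u_k^\pm$ satisfy linearized equations whose coefficients converge — here one invokes concavity of $\mathcal F$ together with the interior $C^{2,\alpha}$ (Evans–Krylov) estimates to get that the limits $\tilde u^\pm$ are solutions of a fixed constant-coefficient linear elliptic equation $L^\pm \tilde u^\pm = 0$ (in fact, one can arrange $L^\pm = \mathcal F_{ij}(0)\partial_{ij}$ up to the freedom in choosing $M_k$ inside the null set $\{\mathcal F=0\}$). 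The free boundary condition $u_{k,\nu}^+ = G(u_{k,\nu}^-)$ linearizes, using $G \in C^1$ with $G(1)=1$ and $G'(1)=:g$, to a transmission/Neumann-type coupling on $\{x_n=0\}$ between $\tilde u^+$ and $\tilde u^-$ and the first-order geometry of the flattened free boundary; this is the ``weak Evans–Krylov estimate for a nonlinear transmission problem'' alluded to in the introduction, and it is the step I expect to be the main obstacle — one must show the coefficients genuinely converge (no loss of ellipticity across the interface) and that the limiting coupled system is well-posed with the expected regularity. Granting this, the linearized transmission problem has a $C^{2,\alpha}$ regularity theory: the pair $(\tilde u^+, \tilde u^-)$ is approximated at scale $r_0$ by a quadratic polynomial $P$, equivalently by a deformation $P_{\bar M, \nu}$ of the two-plane configuration.

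The final step is the dichotomy itself. From the $C^{2,\alpha}$ expansion of the limit one gets, at scale $r_0$ and for $k$ large, $|u_k - P_{\bar M_k, \nu_k}| \le \tfrac12 \eps_k r_0^{2+\alpha_0}$ — unless the quadratic part of the expansion is obstructed. The only obstruction comes from the normal second derivative at $0$: if $\tilde u^+$ (hence, by the linearized free boundary condition, $\tilde u^-$) has a strictly negative $\partial_{nn}$ at the origin, one cannot straighten the free boundary into a new $P_{\bar M,\nu}$, but in that case the value $\partial_n \tilde u^+(0)$ drops by a definite amount, which translates to $|\nabla u_k^+(0)| \le 1 - c_0 \eps_k$, i.e. alternative (ii). To see that one of the two must occur, note $|\nabla u_k^+|$ is a subsolution of the linearized equation in $\{u_k>0\}$, so in the limit $|\nabla \tilde u^+ \text{ (its linearization)}|$ is subharmonic for $L^+$ and, being $\le 0$ at its maximum over the (flattened) boundary, either it vanishes identically near $0$ — giving the $C^{2,\alpha}$ improvement and alternative (i) via Hopf applied in reverse — or it is strictly negative, feeding alternative (ii) through the Hopf lemma for $\partial_n \tilde u^\pm$ exactly as in the formal argument sketched in the introduction. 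Choosing $r_0$ so that $C r_0^{\alpha_0} \le \tfrac12$ in the $C^{2,\alpha}$ estimate, and then $\eps_0, \delta_0, c_0, \alpha_0$ universal, closes the contradiction and completes the proof.
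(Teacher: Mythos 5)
Your compactness scheme shares the paper's overall skeleton, but two substantive errors in how you identify and exploit the limiting transmission problem make the argument fail. First, you assert that the limits $\tilde u^\pm$ solve a constant-coefficient linear elliptic equation $L^\pm = \mathcal F_{ij}(0)\partial_{ij}$. This cannot be right: $\mathcal F$ is only concave and positively homogeneous of degree one, hence typically not differentiable at $0$ (the Pucci extremal operators, for example), and the rescaled operators $\mathcal F^*_k(N) = \tfrac{1}{\eps_k}\mathcal F_k(\eps_k N + M_k)$ converge to a limit $\mathcal F^*$ that is still a genuinely fully nonlinear concave operator. The limiting problem is the nonlinear transmission problem \eqref{Neumann_2}, not a linear one. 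Consequently your second claim --- that the limiting transmission problem has a clean $C^{2,\alpha}$ regularity theory --- is precisely what the paper does \emph{not} have; the absence of Evans--Krylov for the transmission problem is the whole reason the statement is a dichotomy. If a clean $C^{2,\alpha}$ estimate held, alternative (ii) would never be needed.

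What the paper proves instead is a linear-level dichotomy, Proposition~\ref{dic}: for a bounded solution of the concave transmission problem that is monotone decreasing in $e_n$, either $v_n^+(0)\le -c_0$, or $v$ is approximated at scale $r_0$ by a pure $x'$-quadratic. Its proof needs real work that your proposal does not supply: uniform $x'$-semiconcavity obtained from the weak Harnack inequality for the class $\underline{\mathcal S^*}$, a quadratic growth bound $\|\bar v\|_{L^\infty(B_r)}\le Cr^2$ proved by comparison with an explicit barrier, and a Liouville-type classification (Lemma~\ref{global_lemma}) of global monotone solutions with quadratic growth. Your route to the dichotomy --- ``$|\nabla u_k^+|$ is a subsolution, so in the limit either its linearization vanishes near $0$ or it is strictly negative'' --- is essentially the formal sketch from the Introduction, which the authors explicitly flag as non-rigorous because it presumes $C^2$ regularity of $\Gamma(u)$ and an attained interior maximum, neither of which is available. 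You also do not derive the monotonicity $(u^*_n)^\pm\le 0$ from $|\nabla u_k|\le 1$; this is a necessary hypothesis for Proposition~\ref{dic}, and obtaining it is a genuine step that requires the $C^{1,\alpha}$ estimates for $\tilde u_k$ together with the confinement $\Gamma(u_k)\subset\{|x_n|\le \eps_k^{1/2}\}$.
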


We remark that assumption \eqref{flat1} implies that $\Gamma(u) \in C^{1,\alpha}$ and $u$ is a classical solution, hence $\nabla u^+ (0)$ is well defined (see Section 2).

The paper is organized as follows. In Section 2 we provide notation and definitions, and we recall the flatness result established in \cite{DFS}. The proof of the main Theorem \ref{T2} is presented in Section 3, assuming that Proposition 
\ref{Nonlinear_intro} holds. In Section 4 we study the transmission problem which appears as the linearization to our free boundary problem, and we prove a linear version of Proposition \ref{Nonlinear_intro}. In the last section we provide the proof of Proposition \ref{Nonlinear_intro} and we also sketch a proof of Theorem \ref{T2.1}.
%To do so, we also need to obtain a refinement of the flatness result from \cite{DFS}. At the end we also sketch a proof of Theorem \ref{T2.1}.

%Also, 
%$$ 
%\mathcal{M}^-_{\lambda,\Lambda}(M)=  \inf_{A\in\mathcal{A}_{\lambda,\Lambda}}\mbox{Tr}(AM),\quad \mathcal{M}^+_{\lambda,\Lambda}(M)=\sup_{A\in\mathcal{A}_{\lambda,\Lambda}}\mbox{Tr}(AM),
%$$
%$$
%\mathcal{A}_{\lambda,\Lambda}=\{A\in\mathcal{S}^{n\times n}:\quad |x|^2\lambda\leq a_{ij} x_ix_j\leq \Lambda|x|^2, \quad \forall x\in \R^n \}. 
%$$ 
%If $\mathcal{F} \in \mathcal E(\lambda,\Lambda)$ then\begin{equation*}
%\mathcal{M}^-_{\frac{\lambda}{n},\Lambda}(M)\leq \mathcal{F}(M)\leq \mathcal{M}^+_{\frac{\lambda}{n},\Lambda}(M),
%\end{equation*}.

\section{Preliminaries}

%We recall some standard fact about fully nonlinear uniformly elliptic operators. For a comprehensive treatment of this topic, we refer the reader to \cite{CC}.

In this section we present some preliminary definitions and known results. 

First we define viscosity solutions to \eqref{fb}.
Recall that $\mathcal F: \mathcal{S}^{n\times n} \to \R$ is uniformly elliptic if 
there exist $0<\lambda\leq \Lambda$ such that for every  $M, N\in \mathcal{S}^{n\times n},$ 
$$
\mathcal M_{\lambda,\Lambda}^-(N) \leq \mathcal{F}(M+N)-\mathcal{F}(M)\leq \mathcal M_{\lambda,\Lambda}^+(N).
$$
Here $\mathcal{S}^{n\times n}$ denotes the set of real  $n\times n$ symmetric matrices and $\mathcal M^\pm_{\lambda,\Lambda}$ the extremal Pucci operators
$$\mathcal M^-_{\lambda,\Lambda}(N)= \Lambda \sum_{\mu_i<0} \mu_i +  \lambda \sum_{\mu_i>0} \mu_i , \quad \quad \quad
\mathcal M^+_{\lambda,\Lambda}(N)= \lambda \sum_{\mu_i<0} \mu_i +  \Lambda \sum_{\mu_i>0} \mu_i,$$
with $\mu_i$ denoting the eigenvalues of $N$.

The class of all uniformly elliptic operators with ellipticity constants $\lambda,\Lambda$ and such that $\mathcal F(0)=0$ will be denoted by $\mathcal{E}(\lambda, \Lambda).$

Given $u, \varphi \in C(B_1)$, we say that $\varphi$
touches $u$ by below (resp. above) at $x_0 \in B_1$ if $u(x_0)=
\varphi(x_0),$ and
$$u(x) \geq \varphi(x) \quad (\text{resp. $u(x) \leq
\varphi(x)$}) \quad \text{in a neighborhood $O$ of $x_0$.}$$ If
this inequality is strict in $O \setminus \{x_0\}$, we say that
$\varphi$ touches $u$ strictly by below (resp. above).

Let $\mathcal F \in \mathcal E(\lambda, \Lambda)$. If $v \in C^2(O)$, $O$ open subset in $\R^n,$ satisfies  $$\mathcal F(D^2 v) > 0  \  \ \ (\text{resp}. <0)\quad \text{in $O$,}$$ we call $v$ a (strict) classical subsolution (resp. supersolution) to the equation $\mathcal F(D^2 v) = 0 $ in $O$.

We recall that $u \in C(O)$ is a viscosity solution to 
\be\label{Fv}
\mathcal F(D^2 u) = 0  \quad \text{in $O$,}
\ee 
if $u$ cannot be touched by  below (resp. above) by a strict classical subsolution (resp. supersolution) at a point $x_0 \in O.$ Similarly we can define viscosity subsolutions/supersolutions.

We refer the reader to \cite{CC} for  a comprehensive treatment of the theory of fully nonlinear elliptic equations. In particular, if $u$ is a viscosity solution to \eqref{Fv} then $u \in C^{1,\alpha}$ and any directional derivative $u_e$ belongs to the class ${\mathcal{S}}(\lambda,\Lambda)$ of ``solutions" to linear equation with measurable coefficients, and therefore satisfies the Harnack inequality. 

If in addition $\mathcal F$ is concave then $u \in C^{2,\alpha}$ by the Evans-Krylov theorem, and any second directional derivative $u_{ee}$ belongs to the class of subsolutions  $\underline{\mathcal{S}}(\lambda,\Lambda)$  to linear equation with measurable coefficients, and therefore it satisfies the Weak Harnack inequality.

We now turn to the free boundary condition. 

\begin{defn}\label{def} We say that  $u$ satisfies the free boundary condition 
$$ u_\nu^+=G(u_\nu^-), $$
at a point $y_0 \in \Gamma(u)$ if for any unit vector $\nu$, there exists no function $\psi \in C^2 $ defined in a neighborhood of $y_0$ with $\psi(y_0)=0$, $\nabla \psi(y_0)=\nu$ such that either of the following holds:

(1) $a \psi^+ - b \psi^- \le u$ with  $a>0$, $b > 0$  and $a> G(b)$ (i.e. $u$ is a supersolution);

(2) $a \psi^+ - b \psi^- \ge u$ with $a>0$, $b > 0$ and $a < G(b)$ (i.e. $u$ is a subsolution).
\end{defn}

We only use comparison functions which cross the $0$ level set transversally and therefore have a nontrivial negative part. For this reason the free boundary condition is preserved when taking uniform limits. It is straightforward to check that a uniform limit of solutions of \eqref{fb} satisfies \eqref{fb} as well (see Lemma 2.3 in \cite{DS}).

Next we state the flatness theorem obtained by De Silva, Ferrari and Salsa in \cite{DFS}.

\begin{defn}A {\it two-plane solution} $U$ to \eqref{fb} is given by
$$U(x)=U_{a^\pm, \nu}(x):=a^+(x\cdot \nu)^+- a^-(x \cdot \nu)^-,$$
for some $\nu \in S^1$ and with $$a^+=G(a^-), \quad \quad \mbox{and} \quad a^+>0, a^->0.$$\end{defn}

%From the viscosity definition above and the Hopf lemma we see that a two-plane solution $U$ cannot touch $u$ by above (or below) on the free boundary $\{U=0\}$ unless $u$ and $U$ coincide. 

The following result was established in \cite{DFS}. 
%Here universal constants depend also on $\alpha_0^-, \alpha_1^-.$

\begin{thm}[DFS]\label{flatness} Let $u$ be a viscosity solution to \eqref{fb} satisfying \be\label{flat} |u(x)-U(x)| \leq \eps \quad  \text{in $B_1$}, \quad  0 < a^-_0 \leq a^- \leq a^-_1.\ee There exists a constant $\bar \eps(a_0^-,a_1^-)$ such that if $\eps \leq \bar \eps$
then $\Gamma(u)$ is $C^{1, \alpha}$ in $B_{1/2}$, and the $C^{1,\alpha}$ norm of $\Gamma(u)$ is bounded by a universal constant.
\end{thm}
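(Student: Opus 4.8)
\emph{Proof idea.} The plan is to derive the theorem from an \emph{improvement of flatness} dichotomy, applied at every free boundary point and iterated over dyadic scales. First I would establish the following: there are constants $\bar\eps_1,\,r_0\in(0,1),\,C>0,\,\alpha\in(0,1)$, depending only on $n,\lambda,\Lambda,G$ and on a fixed interval $[\tilde a_0,\tilde a_1]$, such that if $0\in\Gamma(u)$, $|u-U_{a^\pm,e_n}|\le\eps$ in $B_1$ with $\eps\le\bar\eps_1$ and $a^-\in[\tilde a_0,\tilde a_1]$, then there is a two-plane solution $U_{\bar a^\pm,\bar\nu}$ with $|\bar\nu-e_n|\le C\eps$, $|\bar a^\pm-a^\pm|\le C\eps$, and $|u-U_{\bar a^\pm,\bar\nu}|\le\tfrac12\,\eps\,r_0$ in $B_{r_0}$. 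Granting this with $[\tilde a_0,\tilde a_1]:=[a_0^-/2,\,2a_1^-]$, the rescalings $v_k(x):=r_0^{-k}u(r_0^k x)$ satisfy the same hypotheses at every dyadic scale with flatness $\eps\,2^{-k}$, slopes that remain in $[\tilde a_0,\tilde a_1]$, and directions $\nu_k$ with $|\nu_{k+1}-\nu_k|\le C\eps\,2^{-k}$; hence $\nu_k\to\nu_\infty$ geometrically and $\Gamma(u)$ is trapped, at scale $r_0^k$, in a slab of width $\lesssim\eps\,2^{-k}r_0^k$ about $\{x\cdot\nu_\infty=0\}$. Writing $2^{-k}=r_0^{k\alpha}$ with $\alpha=\log 2/\log(1/r_0)$, this is exactly pointwise $C^{1,\alpha}$-differentiability of $\Gamma(u)$ at $0$ with a universal bound. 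Running the argument with center at each $x_0\in\Gamma(u)\cap B_{1/2}$ --- legitimate since $|u-U|\le\eps$ in $B_1$ forces $\Gamma(u)$ into a slab of width $C\eps/a_0^-$ about $\{x\cdot\nu=0\}$, so after translating to $x_0$ and mildly shrinking $\bar\eps$ the flatness hypothesis holds in $B_{1/2}(x_0)$ --- and keeping track that every constant depends only on $n,\lambda,\Lambda,G,a_0^-,a_1^-$, yields the uniform $C^{1,\alpha}$ estimate on $\Gamma(u)\cap B_{1/2}$.

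I would prove the dichotomy by compactness. Suppose it fails: there are $\eps_k\to0$, two-plane solutions $U_k=U_{a_k^\pm,e_n}$ with $a_k^-\in[\tilde a_0,\tilde a_1]$, and solutions $u_k$ with $0\in\Gamma(u_k)$ and $|u_k-U_k|\le\eps_k$ in $B_1$, admitting no improvement; along a subsequence $a_k^\pm\to a^\pm$ with $a^+=G(a^-)$. The heart of the matter is a \emph{partial Harnack inequality} at the free boundary: if $U_k(x+\sigma e_n)\le u_k(x)\le U_k(x+\tau e_n)$ in $B_{2\rho}(x_0)$ with $\tau-\sigma$ small, then the oscillation improves to $(1-c)(\tau-\sigma)$ in $B_{\eta\rho}(x_0)$, for universal $c,\eta>0$. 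This follows from the interior Harnack inequality for $u_k^\pm$ --- each solves a uniformly elliptic equation in its own phase, away from $\Gamma(u_k)$ --- applied where $u_k$ detaches quantitatively from one of the barriers, combined at the free boundary with the comparison principle of Definition~\ref{def} tested against the vertically translated two-plane solutions $U_k(\cdot+s e_n)$, which remain sub/supersolutions of \eqref{fb} since translating preserves the slopes. Iterating over dyadic balls shows the normalized functions
\[
\tilde u_k^\pm(x):=\frac{u_k(x)-a_k^\pm x_n}{a_k^\pm\,\eps_k}\qquad\text{on }\{\pm x_n>\eps_k\}\cap B_{1/2}
\]
are uniformly H\"older, with the rescaled free boundaries $\Gamma(u_k)$ converging in Hausdorff distance to $\{x_n=0\}$; along a further subsequence $\tilde u_k^\pm\to\tilde u^\pm$ locally uniformly in $\{\pm x_n>0\}$, and the two limits share a common trace $g(x'):=\tilde u^+(x',0)=\tilde u^-(x',0)$, because on $\Gamma(u_k)$ both normalizations record the same vertical displacement.

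The limit $(\tilde u^+,\tilde u^-)$ solves the \emph{linearized transmission problem}: in $\{\pm x_n>0\}$, $\tilde u^\pm$ solves the constant-coefficient equation obtained by linearizing $\mathcal F$ at $0$; on $\{x_n=0\}$ the traces agree, $\tilde u^+=\tilde u^-=g$; and linearizing $u_\nu^+=G(u_\nu^-)$ about $U_{a^\pm,e_n}$ yields the flux relation $a^+\,\partial_n\tilde u^+=G'(a^-)\,a^-\,\partial_n\tilde u^-$ on $\{x_n=0\}$ (in the viscosity/weak sense, valid in the limit since the free boundary condition survives uniform limits of solutions that cross the zero level transversally). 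Since $a^++G'(a^-)a^->0$, a standard argument --- odd-type reflection across $\{x_n=0\}$ using these conditions, or direct regularity for this translation-invariant problem --- forces $g$ to solve a translation-invariant uniformly elliptic equation in $\R^{n-1}$, hence $g$ is smooth and $|g(x')-g(0)-\nabla'g(0)\cdot x'|\le C|x'|^2$; consequently $|\tilde u^\pm(x)-(q'\cdot x'+d^\pm x_n)|\le C|x|^2$ near $0$, with $g(0)=0$ as $0\in\Gamma(u)$ and $q'=\nabla'g(0)$. Feeding this back, for $k$ large $u_k$ is $\tfrac12\eps_k r_0$-close in $B_{r_0}$ to the two-plane solution with direction $\bar\nu\simeq(\eps_k q',1)/|(\eps_k q',1)|$ and slopes $\bar a^\pm\simeq a^\pm(1+\eps_k d^\pm)$, provided $Cr_0^2\le\tfrac12 r_0$; this contradicts the non-improvement hypothesis and proves the dichotomy.

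The step I expect to be the main obstacle is this compactness argument, and within it two points: (i) the partial Harnack inequality near the free boundary, which must simultaneously handle both phases and the nonlinear condition $u_\nu^+=G(u_\nu^-)$ and control the portion of $\Gamma(u_k)$ where $u_k$ touches its barriers; and (ii) the precise identification of the limiting transmission problem together with its interior regularity --- in particular checking that the weak transmission conditions pass to the limit and pin down $g$. The remaining ingredients --- the dyadic iteration and the upgrade from pointwise differentiability of $\Gamma(u)$ to a uniform $C^{1,\alpha}$ bound on $B_{1/2}$ --- are routine.
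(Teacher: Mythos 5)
The paper does not prove Theorem~\ref{flatness}: it quotes it from [DFS], and only proves the quadratic refinement (Proposition~\ref{c1}) by the same improvement-of-flatness scheme you describe --- partial Harnack at the free boundary (Lemma~\ref{HI}), compactness of the $\eps$-normalized functions to a limiting transmission problem, regularity for that problem (Theorem~\ref{lineareg}), and dyadic iteration. So your proposal is essentially the DFS argument and matches the paper's method where the paper does give a proof.

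Two imprecisions worth flagging. First, the limit of the rescaled operators $N\mapsto\eps_k^{-1}\mathcal F_k(\eps_k N)$ is in general another \emph{fully nonlinear} operator in $\mathcal E(\lambda,\Lambda)$, not ``the constant-coefficient equation obtained by linearizing $\mathcal F$ at $0$'': uniform ellipticity does not make $\mathcal F$ differentiable at $0$, and Theorem~\ref{flatness} does not assume concavity or homogeneity. Consequently your suggested ``odd-type reflection'' across $\{x_n=0\}$ does not directly give regularity of the transmission problem; that step --- $\mathcal C^{1,\alpha}$ estimates for the fully nonlinear transmission problem \eqref{tra}, stated here as Theorem~\ref{lineareg} --- is the genuinely nontrivial input from [DFS] and is proved there by its own compactness/improvement argument, not by reflection. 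Second, the translated two-plane solutions $U_k(\cdot+se_n)$ are exact solutions, not strict sub/supersolutions; to run the partial Harnack one needs to perturb them into strict comparison functions (in the paper this is done via the family of Lemma~\ref{sub_1} and Remark~\ref{rem_w}). Neither point changes the architecture of your argument, but both are where the real work hides.
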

%A necessary step toward the proof of the dichotomy Proposition \ref{Nonlinear_intro} 
We need a refinement of Theorem \ref{flatness} in which the approximation of $u$ in a $C^{1,\alpha}$ fashion is done by using two-phase quadratic polynomials rather than two-plane solutions. 
For this we first introduce the family $V_{a^\pm, M, \nu}$ of quadratic approximations with general slopes $a^+,a^- \in [1/2, 2] $ as
\be\label{V} V_{a^\pm, M, \nu}(x):= a^+P_{M,\nu}^+(x) - a^-P_{M,\nu}^-(x), \quad P_{M,\nu}(x):=x \cdot \nu + \frac 1 2 x^T M x,\ee
with $\nu$ a unit direction and 
$$a^+ = G(a^-), \quad \text{$M \in S^{n\times n}$ such that $M \nu=0$ and $\mathcal F(M)=0$}.$$
Notice that 
\be\label{W}V_{1,1, M\nu}=P_{M, \nu},\ee
while
$$V_{a^\pm, 0, \nu}= U_{a^\pm, \nu}.$$
Next we approximate $u$ by elements of the family $V$ with quadratic part $M$ of order $\eps^{1/2} \gg \eps$.

%Our key result is the next ``improvement of flatness" proposition.

 \begin{prop}\label{c1} Assume that $0 \in \Gamma(u),$ and $\mathcal F$ satisfies \eqref{F}. There exists a universal constant $r_1>0$, such that if $u$ satisfies 
\be \label{flatflat}
|u - V_{a^\pm, M, e_n}|  \leq \eps \quad \text{in $B_1$}, 
\ee
with
\be\label{hyp2}
\|M\| \leq \delta \eps^{1/2},
\ee
for some $0< \eps \leq \eps_1$, then 
 \be\label{flat2flat2} |u - V_{\bar a^\pm, \bar M, \bar\nu}| \leq \eps r_1^{1+\alpha_1} \quad \text{in $B_{r_1}$}\ee
for some $\alpha_1$ universal, with $$|\bar a^+-a^-|, |\bar \nu-e_n|, \|M-\bar M\| \leq C\eps,$$ and $C$ universal.
\end{prop}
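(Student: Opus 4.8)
The plan is to prove Proposition \ref{c1} by adapting the compactness / improvement‑of‑flatness scheme of De Silva--Ferrari--Salsa \cite{DFS}, now linearizing the free boundary problem \eqref{fb} around the two‑phase quadratic $V_{a^{\pm},M,e_n}$ instead of a two‑plane solution. The key structural point is that the hypothesis $\|M\|\le\delta\eps^{1/2}$ makes $V_{a^{\pm},M,e_n}$ an \emph{approximate} solution of \eqref{fb} with $O(\eps)$‑size errors: its positive and negative parts solve $\mathcal F(D^2\cdot)=0$ exactly, since $\mathcal F(a^{\pm}M)=a^{\pm}\mathcal F(M)=0$ by homogeneity, while on its free boundary $\{P_{M,e_n}=0\}$, using $Me_n=0$, the inward normal derivatives equal $a^{+}\sqrt{1+|My|^{2}}$ and $a^{-}\sqrt{1+|My|^{2}}$, so the condition $u_\nu^+=G(u_\nu^-)$ fails only by $O(\|M\|^{2})=O(\delta^{2}\eps)$. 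As a preliminary, since in $B_1$ the quadratic part of $P_{M,e_n}$ has size $\le\|M\|/2$, we get $|u-U_{a^{\pm},e_n}|\le\eps+C\delta\eps^{1/2}$ in $B_1$, below the threshold $\bar\eps(1/2,2)$ of Theorem \ref{flatness} once $\eps_1,\delta$ are small (recall $a^{\pm}\in[1/2,2]$ in \eqref{V}); hence $\Gamma(u)$ is $C^{1,\alpha}$ in $B_{1/2}$ with universal bounds, $u$ is a classical solution there with $\nabla u^{\pm}$ continuous up to $\Gamma(u)$, and (by Evans--Krylov, using concavity of $\mathcal F$) $u^{\pm}\in C^{2,\alpha}$ locally in its phase, so all objects in \eqref{flat2flat2} are meaningful and $u$ is a genuine two‑phase solution near $0$.

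I would then argue by contradiction. If the assertion fails for a universal $r_1$ to be fixed at the end, there are $\eps_k\to0$, solutions $u_k$, slopes $a_k^{\pm}\in[1/2,2]$ with $a_k^{+}=G(a_k^{-})$, and matrices $M_k$ with $M_ke_n=0$, $\mathcal F(M_k)=0$, $\|M_k\|\le\delta\eps_k^{1/2}$, with $|u_k-V_{a_k^{\pm},M_k,e_n}|\le\eps_k$ in $B_1$ but \eqref{flat2flat2} violated by all admissible $(\bar a^{\pm},\bar M,\bar\nu)$. Along a subsequence $a_k^{\pm}\to a_0^{\pm}\in[1/2,2]$ with $a_0^{+}=G(a_0^{-})$, and $M_k\to0$, so $u_k\to U_{a_0^{\pm},e_n}$ uniformly and $\Gamma(u_k)\to\{x_n=0\}$, in fact in $C^{1,\alpha}$ by Theorem \ref{flatness}. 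Set $\tilde u_k:=(u_k-V_{a_k^{\pm},M_k,e_n})/\eps_k$ on $B_{1/2}^{\pm}(u_k)$; then $|\tilde u_k|\le1$ and $\tilde u_k(0)=0$ since $0\in\Gamma(u_k)$. On each phase $\tilde u_k^{\pm}$ solves the uniformly elliptic linear equation obtained by writing $\mathcal F(D^2u_k^{\pm})-\mathcal F(a_k^{\pm}M_k)=0$ and dividing by $\eps_k$, hence it belongs to $\mathcal S(\lambda,\Lambda)$ and enjoys interior Harnack and $C^{1,\beta}$ estimates; combined with the partial Harnack inequality at the free boundary of \cite{DFS} (after flattening the $C^{1,\alpha}$‑converging free boundaries), a subsequence of the graphs of $\tilde u_k^{\pm}$ converges locally uniformly in $\overline{B_{1/2}\cap\{\pm x_n>0\}}$ to functions $w^{\pm}$. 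Since concavity of $\mathcal F$ gives $u_k^{\pm}\to a_0^{\pm}x_n$ locally in $C^{2}$, we have $\eps_kD^2\tilde u_k^{\pm}=D^2u_k^{\pm}-a_k^{\pm}M_k\to0$ locally uniformly and the linearized coefficients converge to $\mathcal F_{ij}(0):=\partial_{M_{ij}}\mathcal F(0)$; therefore $w^{\pm}$ solves the constant‑coefficient equation $\mathcal F_{ij}(0)\,\partial_{ij}w^{\pm}=0$ in $B_{1/2}\cap\{\pm x_n>0\}$, and passing to the limit in the free boundary condition of Definition \ref{def} across the moving free boundaries yields, on $\{x_n=0\}$, exactly the linearized transmission conditions relating $w^{+}$ and $w^{-}$ (a continuity relation and a flux identity with coefficients built from $a_0^{\pm}$ and $G'(a_0^{-})$) arising in the flatness analysis of \cite{DFS}.

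Next I would invoke the regularity of this limit problem: $w^{\pm}$ is $C^{1,\beta}$ up to $\{x_n=0\}$ with universal bounds, so it admits a first‑order Taylor expansion $w^{\pm}(x)=\ell^{\pm}\cdot x+O(|x|^{1+\beta})$ at $0$ (no constant term, since $w^{\pm}(0)=0$). The pair $(\ell^{+},\ell^{-})$ satisfies the tangential‑compatibility and flux relations of the linearized transmission problem, which are precisely the relations satisfied by the linear parts of $\partial_{a^{-}}V$ and $\partial_{\nu}V$ at $(a_0^{\pm},0,e_n)$; hence there exist $\dot a^{-}\in\mathbb R$ and $\dot\nu\perp e_n$ with $|\dot a^{-}|,|\dot\nu|\le C$ whose associated linearization of the family \eqref{V} reproduces $\ell^{\pm}\cdot x$. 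Set $\bar a^{-}:=a_k^{-}+\eps_k\dot a^{-}$, $\bar a^{+}:=G(\bar a^{-})$, $\bar\nu:=(e_n+\eps_k\dot\nu)/|e_n+\eps_k\dot\nu|$, and let $\bar M$ be the unique nearby matrix obtained from $M_k$ by an $O(\eps_k\|M_k\|)=O(\eps_k^{3/2})$ correction so that $\bar M\bar\nu=0$ and $\mathcal F(\bar M)=0$; then $\|\bar M-M_k\|,\ |\bar a^{+}-a_k^{-}|,\ |\bar\nu-e_n|\le C\eps_k$ and $\eps_k^{-1}\big(V_{\bar a^{\pm},\bar M,\bar\nu}-V_{a_k^{\pm},M_k,e_n}\big)\to\ell^{\pm}\cdot x$ locally uniformly on $\{\pm x_n>0\}$. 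Consequently, for $r\le1/4$,
$$\limsup_{k\to\infty}\ \eps_k^{-1}\sup_{B_r}\,|u_k-V_{\bar a^{\pm},\bar M,\bar\nu}|\ \le\ \max_{\pm}\ \sup_{B_r\cap\{\pm x_n>0\}}|w^{\pm}(x)-\ell^{\pm}\cdot x|\ \le\ C_0\,r^{1+\beta}.$$
Fixing $\alpha_1\in(0,\beta)$ and then a universal $r_1$ with $C_0r_1^{1+\beta}\le\tfrac12 r_1^{1+\alpha_1}$ contradicts, for large $k$, the failure of \eqref{flat2flat2}, and the proposition follows.

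The main obstacle is the step of identifying and using the limiting linearized transmission problem: passing to the limit in the free boundary condition of Definition \ref{def} through the moving (only $C^{1,\alpha}$) free boundaries, with compactness supplied by the Harnack‑type machinery of \cite{DFS}, and knowing that this limit problem is well posed and $C^{1,\beta}$ up to the interface. By contrast, everything involving $M$ is soft: because $\|M\|\le\delta\eps^{1/2}$, the quadratic term contributes only $O(\eps)$ errors to the equation and the free boundary condition and disappears in the limit, so it only has to be transported consistently along the iteration — which is why the conclusion permits $\|\bar M-M\|\le C\eps$ and yields the first‑order rate $r^{1+\alpha_1}$ rather than a second‑order one.
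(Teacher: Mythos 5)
Your overall plan matches the paper's Section~5 proof: argue by compactness with a contradicting sequence, define normalized differences $\tilde u_k$, show they converge to a solution of a transmission problem on $\{x_n=0\}$, apply Theorem~\ref{lineareg} to get a first--order Taylor expansion at $0$, and translate this back into an improved two--phase quadratic approximation of $u_k$. This is the right scheme.

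However, one step is genuinely wrong as written. You claim that, in the limit, $w^{\pm}$ solves the constant--coefficient \emph{linear} equation $\mathcal F_{ij}(0)\,\partial_{ij}w^{\pm}=0$, with $\mathcal F_{ij}(0):=\partial_{M_{ij}}\mathcal F(0)$. But a (genuinely nonlinear) operator $\mathcal F$ that is homogeneous of degree~$1$ is \emph{never} differentiable at the zero matrix: if it were, then $D\mathcal F(0)[M]=\lim_{t\to0^{+}}\mathcal F(tM)/t=\mathcal F(M)$ would force $\mathcal F$ to be linear. (Pucci's extremal operators are the model example.) Consequently the correct limit is not a linear constant--coefficient equation; it is the fully nonlinear equation $\mathcal F^{*}(D^{2}w^{\pm})=0$ where $\mathcal F^{*}$ is the (subsequential) locally uniform limit of the rescaled operators $\mathcal F^{*}_{k}(N):=\eps_{k}^{-1}\mathcal F_{k}(\eps_{k}N+M_{k})$, which remains uniformly elliptic, concave, and vanishes at~$0$, but need not be linear. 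This is exactly why the paper formulates and proves the results in Section~4 (Theorem~\ref{lineareg}, Proposition~\ref{dic}, etc.) for a general concave $\mathcal F\in\mathcal E(\lambda,\Lambda)$ rather than for a Laplacian--type operator. The good news is that your subsequent use of the interface regularity is exactly the nonlinear statement of Theorem~\ref{lineareg}, so once you replace ``$\mathcal F_{ij}(0)\,\partial_{ij}w^{\pm}=0$'' with ``$\mathcal F^{*}(D^{2}w^{\pm})=0$'' the rest of your Step survives.

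Two further points you gloss over are where the paper actually spends its effort. First, the compactness of $\tilde u_k$ up to the moving interface: the paper does not flatten the $C^{1,\alpha}$ free boundaries but instead proves Lemma~\ref{HI}, a Harnack--type inequality obtained directly by comparison with the explicit sub/supersolutions of Lemma~\ref{sub_1}; your proposal to flatten and invoke the partial boundary Harnack of \cite{DFS} should work, but it brings in variable coefficients from the change of variables and you would need to track that they stay uniformly elliptic with the right bounds. Second, passing to the limit in the free boundary condition in the viscosity sense is the technical heart of the argument; the paper carries this out by touching $u^{*}$ with translates of the $\eps$--scale sub/supersolutions constructed in Lemma~\ref{sub_1}, and the inequality $a^{+}p>a^{-}G'(a^{-})q+k\delta^{2}$ there is precisely how the $O(\delta^{2}\eps)$ error of $V_{a^{\pm},M,e_n}$ at the interface (which you correctly identified) gets absorbed; your proposal only asserts this limit passage. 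Neither of these is a wrong idea, but as stated they are sketches where the paper has an argument, whereas the misidentification of the limit operator is an actual error that must be corrected.
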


The proof of Proposition \ref{c1} is postponed till Section 5. We will follow the arguments in \cite{DFS} where the same result was obtained without the presence of the quadratic part, i.e. in the case $M=0$, $\bar M=0$.

Notice that the conclusion of Proposition \ref{c1} can be iterated indefinitely. Indeed, if $u$ satisfies \eqref{flat2flat2}, then the rescaling 
$$u_r(x):= \frac{u(rx)}{r}, \quad \quad r=r_1,$$
will satisfy
$$|u - V_{\bar a^\pm, r\bar M, \bar \nu}|\leq \eps_r:= \eps r^{\alpha_1}, \quad \text{in $B_1$}.$$
In order to apply Proposition \ref{c1} once again, we need to check that
$$\|r\bar M\|\leq \delta \eps_r^{1/2}, \quad \eps_r \leq \eps \leq \eps_1.$$
This is clearly satisfied since (by choosing $\eps_1$ possibly smaller)
$$\|r \bar M\| \leq \|rM\| + C \eps r \le r \delta \eps^{1/2} + C \eps r \leq \delta \eps_r^{1/2}= \delta (\eps r^{\alpha_1})^{1/2}.$$
A consequence of Proposition \ref{c1} is the corollary below which is a refined version of Theorem \ref{flatness}.

\begin{cor}\label{maincor}There exist universal constants, $\eps_1, \delta>0$ such that if \eqref{flatflat}-\eqref{hyp2} hold with $0<\eps \leq \eps_1, $ then $\Gamma(u)$ has small $C^{1,\alpha}$ norm in $B_{1/2}$ and 
\be\label{cor1}
|\nabla u^+(0)| \leq a^+ + C\eps
\ee
\be\label{cor2}\left |\frac{\nabla u(0)}{|\nabla u(0)|} - e_n\right | \leq C \eps 
\ee
with $C>0$ universal.
\end{cor}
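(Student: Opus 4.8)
The plan is to iterate Proposition \ref{c1} at the origin, as the paragraph preceding the statement already indicates. Assume \eqref{flatflat}--\eqref{hyp2} hold with $0 < \eps \le \eps_1$ and set $r := r_1$. A first application of Proposition \ref{c1} yields an approximation $V_{a_1^\pm, M_1, \nu_1}$ of $u$ in $B_r$; the rescaling $u_r(x) = u(rx)/r$ then again satisfies hypotheses of the form \eqref{flatflat}--\eqref{hyp2} with $\eps$ replaced by $\eps r^{\alpha_1}$, as checked in the excerpt, so Proposition \ref{c1} applies once more. Iterating indefinitely produces slopes $a_k^\pm$ (with $a_0^\pm = a^\pm$) and unit directions $\nu_k$ (with $\nu_0 = e_n$) such that
\be\label{iter}
|u(x) - U_{a_k^\pm,\nu_k}(x)| \le \eps\, r^{k(1+\alpha_1)} + C\eps^{1/2} r^{k(1+\alpha_1/2)} \qquad \text{in } B_{r^k},
\ee
the second term being the contribution of the small quadratic part $M_k$ (whose normalized norm is bounded by $\delta\eps^{1/2}r^{k\alpha_1/2}$), together with the one-step bounds $|a_k^\pm - a_{k-1}^\pm| + |\nu_k - \nu_{k-1}| \le C\eps\, r^{(k-1)\alpha_1}$.

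The next step is to sum the resulting geometric series. Since $\sum_{k}\eps r^{k\alpha_1} \le C\eps$, the sequences $a_k^\pm$ and $\nu_k$ are Cauchy and converge to limits $a_\infty^\pm, \nu_\infty$ with $|a_\infty^+ - a^+| \le C\eps$ and $|\nu_\infty - e_n| \le C\eps$, and with tails $|a_k^\pm - a_\infty^\pm| + |\nu_k - \nu_\infty| \le C\eps\, r^{k\alpha_1}$. Feeding these tails back into \eqref{iter} and using $|U_{a_k^\pm,\nu_k} - U_{a_\infty^\pm,\nu_\infty}| \le C\eps\, r^{k\alpha_1}|x|$ on $B_{r^k}$, we obtain
$$
|u(x) - U_{a_\infty^\pm,\nu_\infty}(x)| \le C\eps^{1/2}\, r^{k(1+\alpha_1/2)} \qquad \text{in } B_{r^k}.
$$
Dividing by $|x| \sim r^k$ and interpolating between consecutive scales (using $\|\nabla u\|_{L^\infty} \le C$) shows the right-hand side is $o(|x|)$; hence $u$ is differentiable at $0$ in the two-phase sense, with $\nabla u^+(0) = a_\infty^+\, \nu_\infty$. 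Since $|\nabla u^+(0)| = a_\infty^+ \le a^+ + C\eps$ and $\nabla u(0)/|\nabla u(0)| = \nu_\infty$, this gives \eqref{cor1} and \eqref{cor2}.

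For the statement about $\Gamma(u)$, one reruns the same iteration centered at an arbitrary point $x_0 \in \Gamma(u) \cap B_{1/2}$: restricting \eqref{flatflat} to $B_{1/2}(x_0) \subset B_1$ and rescaling to unit size, the flatness worsens only by a universal factor that can be absorbed by taking $\eps_1$ smaller, while \eqref{hyp2} is preserved. The iteration then shows that near $x_0$ the set $\Gamma(u)$ is a $C^{1,\alpha}$ graph in a direction $\nu(x_0)$ with $|\nu(x_0) - e_n| \le C\eps$; the one-step bounds also give $|\nu(x_0) - \nu(x_1)| \le C\eps\, |x_0 - x_1|^{\alpha}$ for $x_0, x_1 \in \Gamma(u) \cap B_{1/2}$. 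Hence $\Gamma(u)$ has $C^{1,\alpha}$ norm at most $C\eps$ in $B_{1/2}$, which is small once $\eps \le \eps_1$.

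The genuine content is Proposition \ref{c1}, whose proof is deferred to Section 5; granting it, the corollary is bookkeeping. The points to watch are keeping all constants in the summation universal; matching the radius $r^k$ to the iteration depth $k$ when converting the scale-by-scale flatness \eqref{iter} into the pointwise identity $\nabla u^+(0) = a_\infty^+\nu_\infty$, since the quadratic term decays at the slower rate $r^{k(1+\alpha_1/2)}$ and governs the error; and verifying that recentering at a general free boundary point costs only a universal factor in \eqref{flatflat}. I expect none of these to pose a serious obstacle.
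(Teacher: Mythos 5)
Your proof is correct and fills in, via the standard bookkeeping, the iteration of Proposition \ref{c1} that the paper uses implicitly but does not write out; in particular the paper offers no independent argument for this corollary, treating it as a direct consequence of that proposition. The only caution worth flagging is that the convergence of the normals you invoke gives Hölder exponent $\alpha_1$ (not the $\alpha$ from Theorem \ref{lineareg}), and your extra $\eps^{1/2}r^{k(1+\alpha_1/2)}$ term correctly accounts for discarding the quadratic part $M_k$ when comparing to a two-plane function — both points you already noted.
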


\section{The proof of Theorem \ref{T2}}

In this section we will provide the proof of our main Theorem \ref{T2}. In Lemma \ref{uxn} we show that $u$ can be well-approximated by a two-plane solution with maximal slopes at some scale, possibly small. Then we use Proposition \ref{Nonlinear_intro} to obtain a ``reversed" improvement of flatness property and conclude that the solution $u$ is well approximated by the same two-plane solution at all large scales.

Let $u$ be a Lipschitz viscosity solution to \eqref{fb} in $\R^n$.
Call,
\be\label{ab}a:= \sup_{\{u>0\}}|\nabla u|, \quad b:=\sup_{\{u < 0\}}|\nabla u|.\ee

The next lemma provides an initial flatness condition for $u$ at some arbitrary scale.

\begin{lem}\label{uxn} Assume that $a, b>0.$Then, $$a=G(b).$$ Moreover, there exists a sequence $u_k(x):=\frac{u(x_k+d_k x)}{d_k}$ of rescalings of $u$ in $B_2,$ with 
$$u_k \to ax_n^+-bx_n^-$$ uniformly on compacts of $B_2.$
\end{lem}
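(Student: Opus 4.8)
The plan is to extract the two-plane solution as a blow-up of $u$ at points where $|\nabla u|$ nearly realizes its supremum $a$ on the positive phase, and then to use the normalized flatness together with the free boundary condition to force the equality $a=G(b)$. Since $a=\sup_{\{u>0\}}|\nabla u|$ (with $a,b<\infty$ by the Lipschitz hypothesis), I would first choose a sequence of points $y_k\in\{u>0\}$ with $|\nabla u(y_k)|\to a$. Because $u_e\in\mathcal S(\lambda,\Lambda)$ for every direction $e$, interior estimates force $|\nabla u^+|$ to be a subsolution-type quantity whose size is controlled by the distance to $\Gamma(u)$; hence either $\mathrm{dist}(y_k,\Gamma(u))\to\infty$ (in which case $|\nabla u^+|$ would be an interior solution achieving a max in the interior, and by the Liouville/Harnack argument for the class $\mathcal S$ one gets $u^+$ linear with slope $a$ and, since $u^-\not\equiv 0$ is forced by $b>0$ — wait, $b>0$ is the hypothesis — one still has to glue a negative phase), or, more usefully, one sets $d_k:=\mathrm{dist}(y_k,\Gamma(u))$ and picks $x_k\in\Gamma(u)$ with $|y_k-x_k|=d_k$. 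Define the rescalings $u_k(x):=u(x_k+d_kx)/d_k$ on $B_2$. These are uniformly Lipschitz with constant $\max(a,b)$, so by Arzelà–Ascoli a subsequence converges uniformly on compacts to a global Lipschitz viscosity solution $u_\infty$ of \eqref{fb} (using that the free boundary condition passes to uniform limits, as recalled after Definition \ref{def}), with $0\in\Gamma(u_\infty)$ and $\|\nabla u_\infty\|_{L^\infty}\le\max(a,b)$.

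Next I would identify $u_\infty$. By construction $y_k$ sits at distance $d_k$ from $\Gamma(u)$, so in the rescaled picture the point $(y_k-x_k)/d_k$ is a unit-distance point from the origin lying in $\{u_k>0\}$, and $|\nabla u_k|$ at that point equals $|\nabla u(y_k)|\to a$. Thus $|\nabla u_\infty^+|=a$ is attained at an interior point of $\{u_\infty>0\}$ at distance $1$ from $0$. Since $u_\infty$ solves a concave (or convex) uniformly elliptic equation, $u_{\infty,e}$ lies in $\mathcal S(\lambda,\Lambda)$, and $|\nabla u_\infty^+|^2$ (or a suitable modification) is a subsolution in $\{u_\infty>0\}$; its interior maximum value $a=\sup|\nabla u^+|$ is also the global sup, so by the strong maximum principle for the class $\mathcal S$ — applied after noting $a$ is already the sup of $|\nabla u_\infty^+|$ over the whole connected positivity component containing the max point — the function $|\nabla u_\infty^+|$ is constant $\equiv a$ on that component, which forces $u_\infty^+$ to be the linear function $a(x\cdot\nu)^+$ for some unit $\nu$, and $\Gamma(u_\infty)$ to be the hyperplane $\{x\cdot\nu=0\}$ near $0$; a connectedness/unique-continuation argument then propagates this to all of $\R^n$. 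On the negative side, the free boundary condition $(u_\infty)_\nu^+=G((u_\infty)_\nu^-)$ holds at $0$, so $u_\infty^-$ is the half-plane solution $b'(x\cdot\nu)^-$ with $a=G(b')$; and $b'\le b$ since $b$ bounds $|\nabla u^-|$. This gives $u_\infty=a(x\cdot\nu)^+-b'(x\cdot\nu)^-$ with $a=G(b')$, $b'\le b$. A symmetric argument applied to points where $|\nabla u^-|$ nearly realizes $b$ produces a two-plane limit with the full slope $b$ on the negative side, slope $G^{-1}$-related on the positive side, no larger than $a$; combining the two inequalities with monotonicity of $G$ forces $b'=b$, hence $a=G(b)$. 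At that point the first blow-up sequence already has $u_k\to a x_n^+-bx_n^-$ after rotating so that $\nu=e_n$, which is the claimed statement.

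The main obstacle I expect is the rigorous justification that $|\nabla u_\infty^+|$ being constant on the component implies $u_\infty^+$ linear, and more generally the strong-maximum-principle step in the class $\mathcal S(\lambda,\Lambda)$ for a quantity that is only a subsolution with measurable coefficients: one cannot just cite the classical Hopf lemma. The clean route is: $|\nabla u_\infty^+|$ attains its global maximum at an interior point, each component $\partial_e u_\infty^+\in\mathcal S(\lambda,\Lambda)$, so by the strong maximum principle for $\mathcal S$ each $\partial_e u_\infty^+$ that attains an interior max/min is constant; choosing $e$ to be the gradient direction at the max point shows that directional derivative is constant and equals $a$, and then an orthogonality/Pythagoras argument (all other directional derivatives are bounded by $a$ in absolute value but their squares sum with $a^2$ to $\le a^2$) forces them to vanish, giving $\nabla u_\infty^+\equiv a\nu$. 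A secondary technical point is ensuring the convergence $u_k\to u_\infty$ does not lose the free boundary — this is exactly the content of the remark after Definition \ref{def}, so it can be invoked — and checking that $d_k$ does not degenerate: if $d_k\to\infty$ along a subsequence one instead works directly with $u$ (no rescaling) and runs the same maximum-principle argument on $\{u>0\}$ itself, which is even simpler. Finally, the equality $a=G(b)$ needs both one-sided comparisons, so I would be careful to record the symmetric blow-up for the negative phase explicitly.
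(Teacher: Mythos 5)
Your overall blow-up strategy matches the paper's: pick points $y_k$ in the positive phase where $|\nabla u|$ is nearly $a$, rescale by $d_k=\mathrm{dist}(y_k,\Gamma(u))$ so that the free boundary sits at unit distance, pass to a limit $u_\infty$ solving \eqref{fb}, and use that $u_{\infty,e}\in\mathcal S(\lambda,\Lambda)$ together with the strong maximum principle and the Pythagoras-type observation to conclude $u_\infty^+$ is linear with slope $a$. This is exactly the paper's first half.

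Where you diverge, however, there is a genuine gap. After establishing $u_\infty^+ = a(x\cdot\nu)^+$, you assert that ``the free boundary condition holds at $0$, so $u_\infty^-$ is the half-plane solution $b'(x\cdot\nu)^-$ with $a=G(b')$.'' This does not follow from the viscosity free boundary condition (Definition \ref{def}) without further work: one would first need boundary $C^{1,\alpha}$ regularity for $u_\infty^-$ on the flat interface to even speak of its slope at $0$, then a careful touching argument to show that slope equals $G^{-1}(a)$, and yet another Liouville-type argument to upgrade from ``slope $b'$ at $0$'' to ``$u_\infty^-$ is linear in all of the negative phase'' (which is what you need for the second claim of the lemma, $u_k\to a x_n^+-bx_n^-$). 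You also never rule out the degenerate possibility $u_\infty^-\equiv 0$. The paper sidesteps all of these issues with a cleaner contradiction structure: \emph{assume} $a>G(b)$, set $b_0=G^{-1}(a)>b$, and observe that the two-plane $p=a(x_n+1)^+-b_0(x_n+1)^-$ lies below $\bar u$ with touching along $\{x_n=-1\}$ (because $|\nabla \bar u|\le b<b_0$ in the negative phase); the viscosity free boundary condition from Definition \ref{def} is then violated by the intermediate comparison functions $a(x_n+1)^+-b''(x_n+1)^-$ with $b<b''<b_0$, giving the contradiction, and the same touching argument with $b_0=b$ later yields $\bar u\equiv p$, proving the negative phase is linear with no extra regularity input.

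Your symmetric-blow-up-plus-monotonicity idea ($a=G(b')$, $b'\le b$; $a''=G(b)$, $a''\le a$; combine) is a nice alternative route to $a=G(b)$ and does avoid the explicit contradiction hypothesis, but it stands on the unjustified step above, which is precisely what the paper's barrier argument is designed to replace. You should either prove the slope identity at $0$ (via Krylov-type boundary regularity on a hyperplane plus a two-sided viscosity touching argument, and separately rule out $u_\infty^-\equiv 0$) or adopt the paper's comparison with a two-plane barrier $p$, which handles both $a=G(b)$ and the identification of the negative phase in one stroke. One minor point: the blow-up handles $d_k\to\infty$ automatically through the rescaling, so your side remark about treating that case ``without rescaling'' is unnecessary.
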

\begin{proof} Assume that
\be \label{ab2} a>G(b).\ee
Given $\eps>0$, let $x_\eps \in \{u>0\}$ be such that
$$|\nabla u(x_\eps)| \geq a-\eps.$$ Let $d$ be the distance from $x_\eps$ to $\Gamma(u)$ and consider the rescalings
$$u_\eps(x):= \frac{u(x_\eps+d x)}{d}.$$
These rescalings will still satisfy  \eqref{fb} say in $B_2,$ with $B_1\subset B_2^+(u_\eps)$ being tangent to $\Gamma(u_\eps)$. In fact, after a rotation we can also assume that
$$\nabla u_\eps(0)= t_\eps e_n, \quad a \geq t_\eps \geq a-\eps.$$
Since the $u_\eps$ are uniformly Lipschitz (up to extracting a subsequence) we can conclude that
$$u_\eps \to \bar u$$
uniformly on compacts in $B_2$. In fact, since the $u_\eps$ are uniformly $C^{2,\alpha}$ in the interior, the convergence is in the $C^{2,\alpha}$ norm on compact subsets of $B^+_2(\bar u) \cup B^-_2(\bar u).$

In particular, $\bar u$ solves \eqref{fb} and it satisfies
$$\bar u_n(0)=a, \quad |\nabla \bar u| \leq a \quad \text{in $B_2^+(\bar u)$},$$
with $B_1 \subset B_2^+(\bar u)$ tangent to $\Gamma(\bar u).$ In fact $\bar u$ solves $\mathcal F(D^2 \bar u)=0$ in $B_1$ and $\bar u \geq 0$ in $B_1$ but it is not identically zero because $\bar u_n(0)=a>0$. Thus $\bar u>0$ in $B_1.$ Moreover, since all the $u_\eps$ vanish at a point on $\p B_1$ it follows that $B_1$ is tangent to $\Gamma(\bar u).$

Now by Proposition 5.5 in \cite{CC}, $\bar u_n \in \mathcal S(\lambda,\Lambda)$, hence by the strong maximum principle
$$\bar u_n \equiv a \quad \text{in $\mathcal O$}$$
where $\mathcal O$ is the connected component on $B_2^+(\bar u)$ containing $B_1.$
On the other hand,
$$|\nabla \bar u| \leq a \quad \text{in $\mathcal O,$}$$ hence  
$$\bar u = a x_n + constant \quad \text{in $\mathcal O.$}$$
Moreover $\bar u =0$ at the point where the unit ball is tangent to $F(\bar u)$ thus
\be\label{ubar}\bar u = a (x_n +1)  \quad \text{in $B_2 \cap \{x_n >-1\}$}.\ee
Now, call (see \eqref{ab} and recall $a,b>0$)$$b_0=G^{-1}(a)>b.$$ Since $|\nabla \bar u| \leq b$ in $B_2^-(\bar u)$ and \eqref{ubar} holds, the two-plane solution $$p:=a(x_n+1)^+-b_0(x_n+1)^-$$ touches $\bar u$ by below on $B_2 \cap \{x_n=-1\}$. This is possible only if $p \equiv \bar u$ and this contradicts that $|\nabla \bar u| \leq b$ in $B_2^-(\bar u).$

Similarly, if we assume that $a< G(b)$ we can argue as above, starting with a point $x_\eps \in \{u < 0\}$ where $|\nabla u (x_\eps)| > b-\eps$ and reach a contradiction. Thus, we have shown that $$a=G(b)$$ and moreover
$$\bar u= a(x_n+1)^+-b(x_n+1)^-,$$  which proves the second part of the lemma.
\end{proof}

Since $\mathcal F$ is homogeneous of degree 1, after multiplication by a constant we can assume without loss of generality that $$a=b=1, \quad G(1)=1, \quad |\nabla u| \le 1,$$ 
and this assumption will be made throughout the paper from now on. Then, the following corollary holds.

\begin{cor}\label{eeta} Assume $0 \in \Gamma(u)$ and $t e_n \in B_2^+(u)$ for all $t \in (0,1]$. If
$$u_n(t e_n) \geq 1-\eta \quad \quad \quad \forall t \in (0,1],$$ for some $\eta >0$, then
$$|u-x_n| \leq \eps(\eta) \quad \text{in $B_2,$}$$ with $\eps(\eta) \to 0$ as $\eta \to 0.$
\end{cor}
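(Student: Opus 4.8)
The plan is a compactness/contradiction argument. Suppose the statement fails: there are $\eps_0>0$, $\eta_k\downarrow 0$, and global Lipschitz viscosity solutions $u_k$ of \eqref{fb} (with the running normalization $|\nabla u_k|\le 1$, $G(1)=1$, $a=b=1$) such that $0\in\Gamma(u_k)$, $te_n\in B_2^+(u_k)$ and $u_{k,n}(te_n)\ge 1-\eta_k$ for all $t\in(0,1]$, but $\|u_k-x_n\|_{L^\infty(B_2)}\ge \eps_0$. Since the $u_k$ are uniformly Lipschitz with $u_k(0)=0$, after passing to a subsequence $u_k\to u_\infty$ locally uniformly, and by the interior $C^{2,\alpha}$ estimates ($\mathcal F$ concave) the convergence is $C^2_{loc}$ on $B^\pm(u_\infty)$. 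By stability of viscosity solutions of \eqref{fb} under uniform limits (Lemma 2.3 in \cite{DS}; recall that the comparison functions in Definition \ref{def} carry a genuine negative part), $u_\infty$ is a global viscosity solution of \eqref{fb} with $|\nabla u_\infty|\le 1$, $0\in\Gamma(u_\infty)$, and $\|u_\infty-x_n\|_{L^\infty(\overline{B_2})}\ge\eps_0$. It then suffices to prove $u_\infty\equiv x_n$.

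\emph{Positive phase.} Integrating $u_{k,n}$ along the axis (where $u_k$ is $C^1$, as the segment lies in $B_2^+(u_k)$) gives $(1-\eta_k)t\le u_k(te_n)\le t$ on $(0,1]$, hence $u_\infty(te_n)=t$ on $[0,1]$; combined with $|\nabla u_\infty|\le 1$ this forces $\nabla u_\infty\equiv e_n$ on the open segment $\{se_n:0<s<1\}\subset B^+(u_\infty)$. Since $\partial_n u_\infty\in\mathcal S(\lambda,\Lambda)$ in $B^+(u_\infty)$ (as in the proof of Lemma \ref{uxn}, via Proposition 5.5 in \cite{CC}) and $\partial_n u_\infty\le 1$, the strong maximum principle gives $\partial_n u_\infty\equiv 1$ on the connected component $\mathcal O$ of $B^+(u_\infty)$ containing this segment; then $|\nabla u_\infty|\le 1$ yields $\nabla u_\infty\equiv e_n$ in $\mathcal O$, so $u_\infty=x_n$ in $\mathcal O$ (the constant is $0$ by the normalization on the axis). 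A topological argument identical to the one in Lemma \ref{uxn} — using $u_\infty=0$ on $\partial\mathcal O$ and the connectedness of $\{x_n>0\}$ — shows $\mathcal O=\{x_n>0\}$, hence $u_\infty=x_n$ on $\{x_n\ge 0\}$.

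\emph{Negative phase.} One first pins down the geometry: $\Gamma(u_\infty)=\{x_n=0\}$. Indeed, the bound $|\nabla u_\infty|\le1$, the Hopf lemma (so that the one–sided normal derivative on the positive side of any free boundary point is strictly positive) and $G(1)=1$ make Definition \ref{def} fail at any putative free boundary point lying off $\{x_n=0\}$, or at a point of $\{x_n=0\}$ around which the negative phase does not fill in — e.g. at $y_0\in\{x_n=0\}$ one tests with $\psi=x_n$, $a$ slightly below $1$, and $b$ small, using $G(b)<G(1)=1$. Thus $B^-(u_\infty)=\{x_n<0\}$, the free boundary condition holds classically on $\{x_n=0\}$ with normal $e_n$, and $u^+_\nu=1$ gives $u^-_\nu=G^{-1}(1)=1$, i.e. $\nabla u_\infty=e_n$ on $\{x_n=0\}$ from the negative side as well (using $C^{2,\alpha}$ regularity of $u_\infty$ up to the flat boundary). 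Now put $v:=u_\infty-x_n$. From $|\nabla u_\infty|\le1$ and $u_\infty=0$ on $\{x_n=0\}$ one gets $u_\infty\ge x_n$, so $v\ge 0$ in $\{x_n<0\}$; moreover $v\in\mathcal S(\lambda,\Lambda)$ there (a solution minus a linear function), and $v=0$, $\nabla v=0$ on $\{x_n=0\}$. If $v$ were positive somewhere, the strong minimum principle would give $v>0$ on all of the connected set $\{x_n<0\}$, and then Hopf's lemma at a point $y_0\in\{x_n=0\}$ would force $\partial_n v(y_0)<0$, contradicting $\nabla v(y_0)=0$. Hence $v\equiv 0$, so $u_\infty\equiv x_n$, contradicting $\|u_\infty-x_n\|_{L^\infty(\overline{B_2})}\ge\eps_0$.

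\emph{Main obstacle.} The delicate point is the rigidity of the limit $u_\infty$ in the negative phase — in particular excluding spurious components of $\{u_\infty>0\}$ and interior points of $\{u_\infty=0\}$ inside $\{x_n<0\}$, so that $\Gamma(u_\infty)=\{x_n=0\}$; once this is established the free boundary condition (comparison against the two–plane solution $x_n$) forces $\nabla u_\infty=e_n$ on the crease, and Hopf's lemma finishes. The positive–phase step, by contrast, is a routine strong–maximum–principle argument paralleling Lemma \ref{uxn}.
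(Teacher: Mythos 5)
Your overall strategy matches the paper's: argue by contradiction, extract a locally uniform limit $u_\infty$ from the equi‑Lipschitz sequence, and show $u_\infty\equiv x_n$; the paper's proof of Corollary \ref{eeta} is exactly this, deferring the classification of the limit to the argument of Lemma \ref{uxn}. Your positive‑phase step (integration along the axis, $\partial_n u_\infty\in\mathcal S(\lambda,\Lambda)$, strong maximum principle and the topological identification $\mathcal O=\{x_n>0\}\cap B_2$) reproduces the Lemma \ref{uxn} argument faithfully.

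The negative phase is where your proof departs from the paper and where it has a real gap. You want to ``pin down the geometry'' $\Gamma(u_\infty)=\{x_n=0\}$ and then run Hopf's lemma on $v:=u_\infty-x_n$ in $\{x_n<0\}$. But the only thing your test function ($\psi=x_n$, $a$ slightly below $1$, $b$ small, using $a>G(b)$ since $G(b)<G(1)=1$) actually shows is that the set $\{u_\infty<0\}$ accumulates at every point of $\{x_n=0\}$ from below; it does \emph{not} exclude a ``spurious'' component of $\{u_\infty>0\}$, or a set of interior zeroes, sitting strictly inside $\{x_n<0\}$. The phrase ``the Hopf lemma and $G(1)=1$ make Definition \ref{def} fail at any putative free boundary point lying off $\{x_n=0\}$'' is not backed by a test function — at such a point Hopf only gives $u_\nu^+>0$, $u_\nu^-=G^{-1}(u_\nu^+)>0$, which is perfectly compatible with the free boundary condition and with $|\nabla u_\infty|\le 1$. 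And the identification $B^-(u_\infty)=\{x_n<0\}$ is exactly what you need for your Hopf step, because $v=u_\infty-x_n$ solves the linearized equation only on $B_2\setminus\Gamma(u_\infty)$: if a piece of $\Gamma(u_\infty)$ sits inside $\{x_n<0\}$, then $v\notin\mathcal S(\lambda,\Lambda)$ across it and the strong minimum principle/Hopf chain breaks. You flag this yourself as the ``main obstacle'' but do not resolve it, so the proof is incomplete at precisely the delicate step.

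By contrast, the paper (in Lemma \ref{uxn}, which the corollary's proof invokes) does not attempt to identify the free boundary first. After establishing $\bar u = a(x_n+1)$ in $\{x_n>-1\}$, it observes that the two–plane solution $p=a(x_n+1)^+-b_0(x_n+1)^-$ with $b_0=G^{-1}(a)$ touches $\bar u$ by below along $\{x_n=-1\}$, and this forces $p\equiv\bar u$. In the normalized corollary setting ($a=b=1$, $G(1)=1$, translated so the plane is $\{x_n=0\}$) this yields $\bar u=x_n$ directly, without first proving $\Gamma(\bar u)=\{x_n=0\}$. If you want to salvage your route, you would need a genuine argument ruling out positive components and interior zero sets of $u_\infty$ inside $\{x_n<0\}$; alternatively, switch to the paper's two–plane touching argument, which bypasses this issue.
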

\begin{proof} This follows immediately by compactness from Lemma \ref{uxn}. 
%Since $u(0)=0$ and $|\nabla u| \leq 1,$ we only need to show that $\eps(\eta) \to 0$ as $\eta \to 0.$ 

Assume by contradiction that there exist a sequence $\eta_k \to 0$ and a sequence of equi-Lipschitz solutions $u_k$ to a sequence of problems with operators $\mathcal F_k \in \mathcal E(\lambda, \Lambda)$ and such that 
\be\label{close}(u_k)_n(te_n) \geq 1-\eta_k \quad \text{for all $t \in (0,1]$, $t e_n \in B_2^+(u_k),$}\ee
but
\be\label{far}|u_k - x_n| > \delta \quad \text{at some point in $B_2$, for some fixed $\delta>0.$}\ee  
Then, from $u_k(0)=0$ and \eqref{close} we conclude that
$$u_k(e_n) \geq 1-\eta_k.$$ Since $|\nabla u_k| \leq 1$ this implies that
$$u_k >0 \quad \text{in $B_{1-\eta_k}(e_n)$}.$$

We can now argue as in the previous lemma and extract a subsequence which will converge uniformly on $\overline {B}_2$ to $\bar u=x_n$ (since $a=b=1$) and contradict \eqref{far}.

\end{proof}

We are now ready to prove Theorem \ref{T2}, assuming that Proposition \ref{Nonlinear_intro} holds.

First, we remark that if alternative $(i)$ of Proposition \ref{Nonlinear_intro} is satisfied, then we can rescale and iterate one more time. Precisely,  if $u$ satisfies alternative $(i)$, then the rescaling 
$$u_r(x):= \frac{u(rx)}{r}, \quad \quad \quad r=r_0,$$
will satisfy
$$|u_r - P_{r\bar M, \nu}|\leq \eps_r:= \eps r^{1+\alpha_0}, \quad \text{in $B_1$}.$$
In order to apply Proposition \ref{Nonlinear_intro} once again, we need to check that
$$\|r\bar M\|\leq \delta_0 \eps_r^{1/2}, \quad \eps_r \leq \eps \leq \eps_0.$$
This is clearly satisfied since (for $\eps_0$ small enough)
$$\|r \bar M\| \leq r \delta_0 \eps^{1/2} + C r \eps  \leq \delta_0 \eps_r^{1/2}= \delta_0 (\eps r^{1+\alpha_0})^{1/2}.$$

\medskip

{\it Proof of Theorem $\ref{T2}.$}
Let $\eps_0$ be a sufficiently small universal constant so that the conclusions of Proposition \ref{Nonlinear_intro}, Proposition \ref{c1} and Corollary \ref{maincor} hold for all $\eps \le \eps_0$, and let $\eps(\eta)$ be as in Corollary \ref{eeta}. 

According to Lemma \ref{uxn}, after a translation, a rotation and a rescaling, we can assume that
\be  \mbox{$0 \in \Gamma(u)$ and} \quad \quad |u - x_n| \leq \eps', \quad \text{in $B_1$,}\ee
for $\eps'$ small to be made precise later. If $\eps' \leq \eps_0$, then $\Gamma(u)$ is locally $C^{1,\alpha}$ and $u$ is a classical solution by Corollary \ref{maincor}.

By iterating Proposition \ref{c1} (say with $M \equiv 0$) and by interior $C^{1,\alpha}$ estimates for fully nonlinear equations it follows that,
\be\label{eps''} u_n(t e_n) \geq 1 -C \eps' \quad \quad \quad \forall t \in (0, 1/2], \ee
and
\be\label{eps'} u_n^+(0):= \lim_{t \to 0^+} \frac{u(t e_n)}{t} > 1- C  \eps',\ee
for some $C$ universal.

 We choose  $\bar \eta$ universal, small enough so that $$\eps(\bar \eta) \leq\eps_0,$$
and we claim that

\begin{lem}\label{bo}
\be\label{unen} u_n(t e_n) \geq 1-\bar \eta, \quad \text{for all $t > 0$.} 
\ee 
provided that $\eps'$ is chosen sufficiently small.
\end{lem}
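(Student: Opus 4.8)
\textbf{Proof plan for Lemma \ref{bo}.} The goal is a global lower bound $u_n(te_n)\ge 1-\bar\eta$ for \emph{all} $t>0$, upgrading the bound \eqref{eps''} which only holds for $t\in(0,1/2]$. The plan is to run a continuous-induction / first-time-of-failure argument along the ray $\{te_n : t>0\}$ using Proposition \ref{Nonlinear_intro} (the Nonlinear Dichotomy) as the propagation engine, together with the rigidity coming from the fact that the slope can never exceed $1$ (since $|\nabla u|\le 1$ by the normalization $a=b=1$).

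First I would set up the dichotomy at dyadic scales. Fix a small universal $\eps'$ and suppose for contradiction that \eqref{unen} fails with $\bar\eta$; let $R>0$ be (essentially) the first scale at which it fails, i.e.\ $u_n(te_n)\ge 1-\bar\eta$ for $t$ up to roughly $R$ but $u_n(Re_n)$ drops below $1-\bar\eta$. On the range where the slope stays close to $1$, Corollary \ref{eeta} (with $\eta$ comparable to the current deviation, which is $\le\bar\eta\le$ the value making $\eps(\bar\eta)\le\eps_0$) tells us that at the appropriate scale $u$ is $\eps_0$-close to a plane, so Proposition \ref{c1}/Corollary \ref{maincor} apply and $u$ is a classical solution with $\Gamma(u)\in C^{1,\alpha}$, and moreover $u$ is close to a polynomial $P_{M,\nu}$ with $\|M\|$ small — small enough to satisfy the quantitative hypothesis \eqref{hyp}, $\|M\|\le\delta_0\eps^{1/2}$, because the quadratic part picked up from Proposition \ref{c1} is of order $\eps$ while the required bound is of order $\eps^{1/2}\gg\eps$. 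Now invoke Proposition \ref{Nonlinear_intro}: at each scale either alternative $(i)$ holds, so $u$ improves to closeness to a new polynomial $P_{\bar M,\nu}$ at the smaller scale $r_0$ with $\|M-\bar M\|\le C\eps$, and (as observed in the paragraph just before the proof of Theorem \ref{T2}) this can be iterated, keeping the quadratic part controlled; or alternative $(ii)$ holds, $|\nabla u^+(0)|\le 1-c_0\eps$ at the center of that scale.

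The crux is to combine the two alternatives to force a contradiction. If alternative $(i)$ held at \emph{every} dyadic scale from $1$ down along the ray, then summing the increments $\|M_j-M_{j+1}\|\le C\eps_j$ with $\eps_j=\eps r_0^{(1+\alpha_0)j}$ geometrically decaying, the polynomials $P_{M_j,\nu_j}$ converge and $u$ would be asymptotically a two-plane solution $x_n$ near $0$ with full slope $1$; feeding this back up the ray (rescaling the other way, $u_r(x)=u(rx)/r$ for $r$ large) one concludes $u_n(te_n)$ stays within $C\eps'$ of $1$ on the whole ray, contradicting the assumed failure at $R$ for $\bar\eta$ chosen with $\bar\eta> C\eps'$. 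Hence alternative $(ii)$ must occur at some scale $r_0^{k}$, giving, after undoing that rescaling, a point $te_n$ (with $t\sim r_0^{k}$, well inside the good range) at which $u_n(te_n)\le 1-c_0\eps_k < 1-c_0\eps r_0^{(1+\alpha_0)k}$. But $u_n$ is a nonnegative-below-$1$ function in the solution class $\mathcal S(\lambda,\Lambda)$, and by the Harnack inequality for $1-u_n\ge 0$ a definite drop below $1$ at one point on the ray propagates: comparing with \eqref{eps'}, $u_n^+(0)>1-C\eps'$, and Harnack chained along the ray between $0$ and $te_n$ forces $1-u_n(0)$ to be at least a universal fraction of $1-u_n(te_n)$, i.e.\ $\ge c\,c_0\eps_k$ — which is fine unless we can also show $\eps_k$ cannot be too small, i.e.\ that alternative $(ii)$ must in fact occur at a \emph{bounded} number of steps.

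The main obstacle, and where I would spend the most care, is exactly this last point: controlling \emph{when} alternative $(ii)$ first triggers. The right way to see it is to observe that alternative $(i)$ strictly improving the polynomial approximation while the slope at scale-centers stays within $C\eps$ of $1$ cannot continue indefinitely \emph{and} be consistent with $u_n(Re_n)<1-\bar\eta$ at a bounded outer scale $R$ — so $(ii)$ must fire at one of the first $O(1)$ dyadic scales, making $\eps_k\gtrsim\eps'$ universal; then Harnack for $1-u_n$ along the ray between $0$ and that scale gives $1-u_n^+(0)\ge c\eps'$, i.e.\ $u_n^+(0)\le 1-c\eps'$, contradicting \eqref{eps'} once the constants are lined up ($c>C$ is arranged by choosing $\eps'$ small and tracking that the Harnack constant in $(ii)\Rightarrow$ drop-at-$0$ beats the loss constant in \eqref{eps'}). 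I expect the honest version to require care with: (a) which center and which scale the dichotomy is applied at as we march outward along the ray versus inward toward $0$; (b) verifying \eqref{hyp} is preserved under the outward rescalings as well (the quadratic part grows linearly in $r$ but the threshold is $\eps^{1/2}$, and the iteration bookkeeping from before Theorem \ref{T2}'s proof handles this); and (c) making sure $\bar\eta$, which must satisfy $\eps(\bar\eta)\le\eps_0$, is chosen \emph{before} $\eps'$, and that $\eps'$ is then taken small in terms of $\bar\eta$ and all universal constants. Modulo this bookkeeping, the contradiction is: propagation via Proposition \ref{Nonlinear_intro} either makes $u$ look like the full-slope plane $x_n$ everywhere on the ray (contradicting the failure of \eqref{unen}) or produces a definite slope deficit that, pulled back to $0$ by Harnack, contradicts \eqref{eps'}.
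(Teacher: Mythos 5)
Your high-level picture is right in broad strokes (first failure, Harnack for $1-u_n$, iteration of Proposition \ref{Nonlinear_intro}), but the mechanism of contradiction you propose is the wrong one, and the missing ingredient is a specific Hopf-lemma/barrier estimate that the paper's argument hinges on. Concretely, after rescaling so the first failure is at $\bar t=1$, the paper uses Harnack (giving $1-u_n\ge c\bar\eta$ on an interior ball) followed by a comparison with a solution of $\mathcal M^+(D^2w)=0$ and Hopf's lemma in the $C^{1,\alpha}$ domain $B_1^+(u)$ to obtain a \emph{linear} decay of $u_n$ toward $\Gamma(u)$, namely $u_n(te_n)\le 1-c\bar\eta\,t$, and hence, integrating, a universal \emph{quadratic} deficit $u(te_n)\le t-c_1t^2$ for $t\in[0,1/2]$. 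This quadratic deficit is the whole point: it survives at all small scales along the ray. Your sketch replaces this with a chain-Harnack heuristic that only gives pointwise bounds, not the quantitative, scale-invariant concavity that the paper exploits.

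The second problem is that you aim to show alternative (ii) of Proposition \ref{Nonlinear_intro} must fire at an $O(1)$ dyadic scale and then propagate the slope deficit to $0$. The paper actually argues the opposite: because $u_n^+(0)>1-\eps''$ with $\eps''$ chosen tiny in terms of $N,\eps_0,c_0,r_0$, alternative (ii) \emph{cannot} fire during $N$ consecutive iterations — firing (ii) at any of those scales would force $|\nabla u^+(0)|\le 1-c_0\eps_0r_0^{(1+\alpha_0)k}$, incompatible with $u_n^+(0)>1-\eps''$. So (i) fires $N$ times, which yields $|u-P_{M,\nu}|\le\eps_0r^{2+\alpha_0}$ at scale $r=r_0^N$; since $M\nu=0$ and the normal $\nu$ is within $O(\eps_0r^{1+\alpha_0})$ of $e_n$, the approximating polynomial has essentially no quadratic term in the $e_n$ direction, so $u(\tfrac r2e_n)$ is within $O(r^{2+\alpha_0})$ of $\tfrac r2$. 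This contradicts the quadratic deficit $u(\tfrac r2e_n)\le\tfrac r2-c_1(\tfrac r2)^2$ once $r$ is small universal. Your claim that (i) firing indefinitely ``cannot be consistent with $u_n(Re_n)<1-\bar\eta$ at a bounded outer scale'' is not justified and, as stated, circular; the actual incompatibility lives between the near-linearity produced by iterating (i) and the Hopf-driven concavity, not between (i) and the outer slope drop per se. Also, the proposed ``feeding the improvement back up the ray'' is not available: Proposition \ref{Nonlinear_intro} propagates inward only; the outward control is done separately via Corollary \ref{eeta}, which is a compactness statement and not an iteration of the dichotomy.
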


Then, according to Corollary \ref{eeta},
\be 
|u- x_n| \leq \eps(\bar \eta) R \le \eps_0 R, \quad \quad \mbox{in $B_R,$}
\ee
for all $R$ large. This, combined with the improvement of flatness Proposition \ref{c1} (say with $M \equiv0$) implies that $u$ must be a two-plane solution and concludes the proof of our main Theorem \ref{T2}.

\medskip

We are left with the proof of Lemma \ref{bo}.

\medskip

{\it Proof of Lemma $\ref{bo}$.} Denote $C \eps'$ from \eqref{eps''} by $\eps'':=C \eps'$ and we will choose $\eps''$ (and therefore $\eps'$) later, so that $\eps'' \ll \bar \eta $.

Let $\bar t$ be the first $t$ for which \eqref{unen} fails, and assume without loss of generality after a rescaling that $\bar t=1$, i.e.
\be\label{equal}u_n(e_n)=1- \bar \eta, \quad u_n(t e_n) \geq 1-\bar \eta \quad \forall t \in [0,1].\ee 
Notice that after rescaling \eqref{eps'} is still satisfied, i.e.
$$u_n^+(0)> 1-\eps''.$$

From Corollary \ref{eeta} and \eqref{equal}, we conclude that
\be\label{starting}
|u-x_n| \leq \eps(\bar \eta) \leq \eps_0 \quad \text{in $B_2$.}
\ee
Corollary \ref{maincor} gives that $\Gamma(u) \cap B_1$ is a $C^{1,\alpha}$ graph in the $e_n$ direction with small norm.
% and it is contained in a strip of width $O(\eps(\bar \eta))$ in a direction $\nu^*$ with $|\nu^* - e_n| \leq C\eps(\bar \eta).$

Since $1-u_n \geq 0$ belongs to the class of solutions to linear equations with measurable coefficients $\mathcal S(\lambda,\Lambda)$ in $\{u>0\}$, the Harnack inequality and $(1-u_n)(e_n) = \bar \eta$ give that 
$$1-u_n \geq c \bar \eta \quad \text{in $B_{1/4}(\frac 12e_n) \subset B_1^+(u)$}.$$
In $B_1^+(u) \setminus B_{1/4}(e_n/2)$ we compare $1-u_n$ with the solution to $\mathcal M^+(D^2 w)=0$ which equals zero on $\p B_1^+(u)$ and $c \bar \eta$ on $\p B_{1/4}(\frac 12 e_n)$. Using the Hopf lemma in $C^{1,\alpha}$ domains together with $C^{1,\alpha}$ estimates up to the boundary we conclude that $w$ grows linearly away from $\Gamma(u)$ hence
\be u_n(te_n) \leq 1- c \bar \eta t, \quad t \in [0,1/2].
\ee
Integrating in the $e_n$ direction and using $u(0)=0$, and $c$, $\bar \eta$ are universal we find
\be\label{help}
u(te_n) \leq t - c_1 t^2, \quad \quad \forall t \in [0,1/2].
\ee
for some small universal constant $c_1>0$.

From \eqref{starting} we have
$$|u-x_n| \le \eps_0 \quad \quad \mbox{in $B_1$}.$$
We now apply Proposition \ref{Nonlinear_intro} and conclude that either alternative $(i)$ or $(ii)$ is satisfied. However if $\eps''$ is small enough, then alternative $(ii)$ cannot hold since otherwise by \eqref{eps'} 
$$1- c_0 \eps_0 \geq |\nabla u^+(0)| \geq u_n^+(0) \geq 1- \eps'',$$
and we reach a contradiction.
Similarly, after applying the conclusion of Proposition \ref{Nonlinear_intro} a number of $N$ times, we obtain that if $\eps''$ is sufficiently small depending on $N$ and $\eps_0$, $c_0$, $r_0$, then only alternative $(i)$ can hold for the $N$-iterations and conclude that 
\be \label{uW}|u-P_{M, \nu}| \leq \eps_0 r^{2+\alpha_0}, \quad \text{in $B_r$}, \quad r:=r_0^N,\ee
with \be\label{finalnu}
\left|\frac{\nabla u(0)}{|\nabla u (0)|} - \nu\right|\leq C \eps_0 r^{1+\alpha_0}, \quad \text{and} \quad M\nu=0.
\ee
Here we also used Corollary \ref{maincor} (see \eqref{cor2}).
Now notice that,
$$1-\eps'' \leq e_n \cdot \nabla u^+(0) \leq |\nabla u^+(0)| \leq 1,$$
which implies
\be
\left|\frac{\nabla u(0)}{|\nabla u (0)|} - e_n\right |\leq (2\eps'')^{1/2}.
\ee
Thus, if $\eps''$ is small enough depending on $r$, the inequality above together with \eqref{finalnu} gives that
\be |\nu - e_n| \leq 2 C \eps_0 r^{1+\alpha_0}.\ee
Thus, since $M\nu=0$ and $\|M\| \leq 1,$
$$\left |P_{M,\nu}(\frac r 2 e_n) - \frac r 2\right |=\left |\frac r 2 e_n \cdot (\nu-e_n)+ \frac{r^2}{8}(e_n-\nu)^T M (e_n-\nu)\right | \leq C' r^{2+\alpha_0} $$ which combined with \eqref{uW} gives that
$$\left |u(\frac r 2 e_n)-\frac r 2 \right | \leq 2C' r^{2+\alpha_0}.$$
This contradicts \eqref{help} for $t=r/2$, as long as $r$ is small enough universal, i.e. $N$ is large enough and $\eps''$ is small enough.
\qed

\section{The transmission problem}

In this section we study properties of solutions to the nonlinear transmission problem \eqref{tra} below. This type of transmission problem appears as the linearization to the free boundary problem \eqref{fb} and our goal is to obtain a version of the key Proposition \ref{Nonlinear_intro} in this linearized setting. In Section 5 we will use compactness methods and extend the result to the nonlinear setting. 
%the proofs require some ad-hoc Evans-Krylov type estimates                                                             

Consider the transmission problem
\be\label{tra}
\begin{cases}
\mathcal F(D^2 v) =0 \quad \text{in $B_1 \cap \{x_n \neq 0\},$}\\
v_n^+= b v_n^- \quad \text{on $B_1 \cap \{x_n = 0\}$,}\\
\end{cases}
\ee
with $0< b_0\leq  b \leq b_1$ and say $\|v\|_\infty \leq 1.$ Here $\mathcal F \in \mathcal E(\lambda, \Lambda)$.

By abuse of notation, in this section we denote by $v^\pm$ the restrictions of $v$ to $B_1 \cap \{x_n \geq 0\}$ and respectively $B_1 \cap \{x_n \leq0\}.$
Call,
$$\mathcal C^{1,\alpha}(B_1^\pm)= C^{1,\alpha}(B_1 \cap \{x_n\geq 0\}) \cap C^{1,\alpha}(B_1 \cap \{x_n\leq 0\})$$

In \cite{DFS} the authors showed that solutions to the transmission problem above belong to the class $\mathcal C^{1,\alpha}(B_1^\pm)$, for some universal $\alpha$. We recall here the definition of viscosity solutions to the problem \eqref{tra} and the precise result from \cite{DFS}. Constants depending on $n, \lambda, \Lambda, b_0, b_1$ are called universal.

\begin{defn}\label{def_visc}
We say that $v \in C(B_1)$ is a viscosity subsolution (resp. supersolution) to \eqref{tra} if

(i)   $ \mathcal{F} (D^2 v^\pm) \geq 0  \ (\text{resp. $ \leq 0$})\quad \text{in $B^\pm_1$},$ in the viscosity sense;

(ii) $v_n^+ \ge b \, v_n^-$ on $B_1 \cap \{x_n = 0\}$ in the viscosity sense which means that if 
$$P(x')+px_n^+ - qx_n^-$$ touches $v$ by above (resp. by below) at $x_0 \in \{x_n=0\}$ for some quadratic polynomial $P(x')$ in $x'$, then $$ p - b \, \, q \geq 0 \quad (\text{resp.$ \leq 0$}).$$

\end{defn}

If $v$ is both a viscosity subsolution and supersolution to \eqref{tra}, we say that $v$ is a viscosity solution to \eqref{tra}.

%Equivalently, the condition (ii) above can be replaced by the following one:
%(ii')  If $$\varphi \in C^2(\overline{B}_\delta^\pm(x_0))$$ touches $v$ by above (resp. by below) at $x_0 \in \{x_n=0\}$, then $$ \varphi_n^+(x_0) - b \varphi_n^-(x_0) \geq 0 \quad (\text{resp.$ \leq 0$}).$$

The next result is contained in Theorems 3.2 and 3.3 in \cite{DFS}.

\begin{thm}[DFS]\label{lineareg}Let $v$ be a viscosity solution to \eqref{tra} such that $\|v\|_\infty \leq 1$. Then $u\in \mathcal C^{1,\alpha}(B_{1/2}^\pm)$ with a universal bound on the norms. In particular, there exists a universal constant $C$ such that
\be\label{lr}|v(x) - v(0) -(\nabla_{x'}v(0)\cdot x' + px_n^+ -  qx_n^-)| \leq C r^{1+\alpha}, \quad \text{in $B_r$}\ee for all $r \leq 1/4$ and with
\begin{equation}\label{idapbq=0}   p - b \, \,  q=0.\end{equation}
\end{thm}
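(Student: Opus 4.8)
The plan is to establish \eqref{lr}--\eqref{idapbq=0} as a one-sided $C^{1,\alpha}$ expansion at the boundary point $0$ of the flat interface, via the chain: boundary Hölder and Lipschitz estimates across $\{x_n=0\}$ $\Rightarrow$ a Liouville classification of entire solutions $\Rightarrow$ the interior $C^{1,\alpha}$ estimate by a blow-up iteration $\Rightarrow$ the full $\mathcal C^{1,\alpha}(B_{1/2}^\pm)$ bound by interior theory and a translation argument along the interface. The basic objects to keep in mind are the affine transmission profiles $\ell(x)=a\cdot x'+px_n^+-qx_n^-$ with $p-bq=0$: these solve \eqref{tra} (their Hessian vanishes off $\{x_n=0\}$ and $\mathcal F(0)=0$, and they satisfy the transmission condition with equality), and the assertion is precisely that $v$ is approximated by one of them to order $1+\alpha$ at $0$.

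\textbf{Step 1: boundary estimates via barriers.} I would first show that a viscosity solution of \eqref{tra} with $\|v\|_\infty\le 1$ is $C^{0,\gamma}(B_{3/4})$, and in fact Lipschitz in $B_{3/4}$, with universal bounds. Interior Krylov--Safonov handles the regions away from $\{x_n=0\}$; the point is to barrier across the interface. One compares $v$ with functions of the form $\Gamma x_n^+-\gamma x_n^- \pm K|x'-x_0'|^2 \mp N x_n$, choosing $\Gamma\ge b\gamma$ (resp.\ $\le$) so that the comparison function satisfies the transmission inequality in the viscosity sense, and choosing the quadratic and linear corrections so that it is a strict super- (resp.\ sub-) solution of $\mathcal F(D^2\cdot)=0$ in each half-ball (this uses only uniform ellipticity and $\mathcal F(0)=0$). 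Sliding such barriers yields the modulus of continuity and, with linear-growth barriers, the Lipschitz bound; the same construction applied to the Pucci operators gives the analogous estimates for the \emph{linear} transmission problem, i.e.\ for functions lying in $\mathcal S(\lambda,\Lambda)$ on each half and satisfying $w_n^+=b\,w_n^-$.

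\textbf{Step 2: tangential regularity and a Liouville theorem.} For $e\perp e_n$, the difference quotients $\delta_h^e v$ lie in $\mathcal S(\lambda,\Lambda)$ on each half-ball (as differences of solutions of $\mathcal F(D^2\cdot)=0$) and satisfy the \emph{homogeneous} linear transmission condition; by Step 1 applied to this Pucci transmission problem they enjoy a uniform-in-$h$ $C^{0,\gamma}$ bound up to $\{x_n=0\}$, so $\nabla_{x'}v$ exists and is Hölder near the interface. Combining this with Step 1 I would classify entire solutions: if $v$ solves \eqref{tra} in all of $\R^n$ for some $\mathcal F\in\mathcal E(\lambda,\Lambda)$ with $|v(x)|\le C(1+|x|)$, then $v$ is an affine transmission profile with $p-bq=0$. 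Indeed, by scaling invariance of Step 1 the bounded functions $\partial_e v$ ($e\perp e_n$) are entire solutions of the Pucci transmission problem with a universal Hölder bound at every scale, hence constant; so $v(x',x_n)=a_0\cdot x'+g(x_n)$, and $g''\equiv 0$ on each side because the only $s$ with $\mathcal F(\mathrm{diag}(0,\dots,0,s))=0$ is $s=0$ by strict ellipticity in the $nn$-direction. Thus $g(x_n)=p x_n^+-q x_n^-$, and substituting the exact profile $a_0\cdot x'+p x_n^+-q x_n^-$ into the viscosity transmission condition forces $p-bq=0$.

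\textbf{Step 3: $C^{1,\alpha}$ estimate and conclusion.} From this classification the pointwise estimate \eqref{lr}--\eqref{idapbq=0} follows by a standard Campanato-type blow-up iteration: if the rescaled excesses $r^{-(1+\alpha)}\inf_\ell \|v-v(0)-\ell\|_{L^\infty(B_r)}$ (infimum over admissible profiles) were unbounded along some sequence of scales and solutions, one rescales at the worst scale, uses the uniform Lipschitz bound of Step 1 to pass to a limit, and produces an entire linear-growth solution whose excess does not decay, contradicting the classification; the matching $p-bq=0$ of the limiting profile is automatic since it holds for every profile along the iteration. Running this at every point of $\{x_n=0\}\cap B_{1/2}$ with uniform constants and combining with the interior $C^{1,\alpha}$ estimate of Caffarelli--Cabré on the two half-balls upgrades the pointwise bound to $v\in\mathcal C^{1,\alpha}(B_{1/2}^\pm)$ with universal norm. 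I expect the main obstacle to be the barrier constructions of Step 1: the transmission condition is a Neumann/jump-type condition on a codimension-one set, and the comparison functions must be super/subsolutions of it in the viscosity sense, which is exactly where $p=bq$ enters quantitatively; one must also check that the viscosity transmission condition is stable under the uniform limits taken in Steps 2 and 3 (true because admissible test profiles cross $\{x_n=0\}$ transversally). A secondary point is keeping all constants independent of $\mathcal F\in\mathcal E(\lambda,\Lambda)$ and $b\in[b_0,b_1]$, which the compactness scheme handles automatically, together with a careful bookkeeping of signs so that the matching reads $p-bq=0$ as in \eqref{idapbq=0}.
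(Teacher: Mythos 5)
The theorem you are proving is not actually proved in this paper: it is quoted verbatim from De Silva--Ferrari--Salsa, with the explicit attribution ``The next result is contained in Theorems 3.2 and 3.3 in \cite{DFS}.'' So there is no in-paper proof to compare against; I will instead assess your proposal on its own terms. Your overall architecture (barrier estimates across $\{x_n=0\}$, then a Liouville theorem for entire solutions of linear growth, then a Campanato-type blow-up) is one of the standard routes to boundary Schauder estimates and is a legitimate alternative to the Harnack/improvement-of-flatness scheme that DFS (and this paper, in its nonlinear propositions) actually run. The Liouville classification in Step~2 is correct as stated: for an entire solution of the form $a_0\cdot x'+g(x_n)$, strict ellipticity in the $nn$-direction and $\mathcal F(0)=0$ force $g''\equiv 0$ on each side, and testing the affine profile against itself gives $p-bq=0$.

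The genuine gap is in Step~1, and it propagates into Step~2. Sliding a barrier of the form $\Gamma x_n^+-\gamma x_n^-\pm K|x'-x_0'|^2\mp N x_n$ can give a universal \emph{H\"older} modulus across the interface, but it cannot produce the \emph{Lipschitz} bound you claim. To dominate $v-v(x_0)$ on $\partial B_\rho(x_0)$ using only the a priori H\"older information $|v-v(x_0)|\le C\rho^\gamma$ one is forced to take $\Gamma,\gamma\sim \rho^{\gamma-1}$, which blows up as $\rho\to0$; there is no choice of bounded $\Gamma,\gamma$ that keeps the barrier above $v$ near $x_0$ on the negative side, so the slide only yields an exponent $\gamma<1$. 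This matters because Step~2 explicitly invokes a \emph{uniform-in-$h$} $C^{0,\gamma}$ bound on the tangential difference quotients $\delta_h^e v$ to conclude that $\nabla_{x'}v$ exists and is H\"older, and the only way to bound $\|\delta_h^e v\|_\infty$ uniformly in $h$ is to already have the Lipschitz estimate: without it, $\|\delta_h^e v\|_\infty\lesssim h^{\gamma-1}\to\infty$. Similarly, the Liouville theorem uses boundedness of $\partial_e v$ for an entire linear-growth solution, which again requires a scale-invariant Lipschitz estimate, not a H\"older one (rescaling the H\"older estimate at radius $R$ gives a Lipschitz constant $\sim R^{1-\gamma}$). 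The way to break this circularity is not a single ``linear-growth barrier'' but an iteration of incremental quotients in the spirit of Caffarelli--Cabr\'e, Lemma 5.6: using only the H\"older estimate for the Pucci transmission class applied to $\delta_h^e v/h^\gamma$, one upgrades $C^\gamma$ to $C^{\gamma'}$ with $\gamma'>\gamma$, and iterates finitely many times past exponent~$1$; only then does one have the uniform-in-$h$ bound that your Step~2 assumes, and only then does the Liouville classification and the Campanato scheme of Step~3 close. This iteration is a substantive missing step, not a cosmetic one. As a minor point, the sign bookkeeping in your barrier is off: the branch with $+K|x'-x_0'|^2$ is a \emph{subsolution} of $\mathcal F(D^2\cdot)=0$ (since $\mathcal F(2KI_{x'})>0$), not the supersolution your parenthetical labels it; and the $\mp Nx_n$ term contributes nothing to the Hessian, so you would want a $\mp Nx_n^2$ correction to make the super/subsolution inequality strict.
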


 We also recall the definitions from \cite{DFS} of the general classes of subsolutions $\underline{\mathcal S^*}$, supersolutions $\overline{\mathcal S^*}$ and solutions ${\mathcal S^*}$ suited for the transmission problem \eqref{tra}.
 In what follows,
$$L:=\{x_n=0\}.$$
We denote by $\underline{\mathcal{S^*}}(\lambda,\Lambda)$ the class of functions $w \in C(B_1)$ such that   $$\mathcal M^+_{\lambda,\Lambda}(D^2w) \geq 0 \quad \text{in $B_1 \setminus L$}, \quad \quad\mbox{and} \quad w_n^+ - b w_n^- \geq 0, \quad  \hbox{on $B_1 \cap L.$}$$
Analogously, $\overline{\mathcal{S^*}}(\lambda,\Lambda)$ denotes the class of functions 
$w \in C(B_1)$ such that $$-w \in \underline{\mathcal{S^*}}(\lambda,\Lambda),$$
and $$\mathcal{S^*}(\lambda,\Lambda):=\underline{\mathcal{S^*}}(\lambda,\Lambda)\cap \overline{\mathcal{S^*}}(\lambda,\Lambda)$$
% $$\mathcal M^-_{\lambda,\Lambda}(D^2v) \leq 0 \quad \text{in $B_1 \setminus L$}, \quad \quad\mbox{and} \quad v_n^+ - b v_n^- \leq 0, \quad  \hbox{on $B_1 \cap L.$}$$
%Finally we denote by $$\mathcal{S^*}(\lambda,\Lambda):= \underline{\mathcal{S^*}}(\lambda,\Lambda) \cap \overline{\mathcal{S^*}}(\lambda,\Lambda).$$
For simplicity of notation we will drop the dependence on $\lambda, \Lambda$. 

We restrict our attention to the case of concave operators.

\begin{lem}\label{vtt}
Let $v$ be a viscosity solution of \eqref{tra} and assume $\mathcal F$ is concave. 
Then the tangential second order quotients $v^h_{\tau\tau}$ are subsolutions, i.e.
$$ v^h_{\tau\tau}(x):= \frac{ v(x+h\tau)+  v(x-h\tau) -2  v(x)}{h^2}  \quad \in \underline{\mathcal {S^*}}.$$
Here $\tau$ is a unit direction with $\tau \perp e_n$, and $h>0$.
\end{lem}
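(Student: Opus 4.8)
The plan is to verify the two defining properties of the class $\underline{\mathcal S^*}$ separately for the function $w:=v^h_{\tau\tau}$, namely that $\mathcal M^+(D^2 w)\ge 0$ in $B_{1-h}\setminus L$ in the viscosity sense, and that $w_n^+-b\,w_n^-\ge 0$ on $L$ in the viscosity sense of Definition \ref{def_visc}(ii). For the interior inequality I would invoke the standard fact that since $\mathcal F$ is concave, a second difference quotient of a solution to $\mathcal F(D^2v)=0$ satisfies $\mathcal M^+_{\lambda,\Lambda}(D^2 w)\ge 0$ in the viscosity sense on each side $B_1^\pm$ separately: indeed $\mathcal F(D^2 v(x+h\tau))=\mathcal F(D^2 v(x-h\tau))=0$, and concavity gives $\mathcal F\big(\tfrac12(D^2v(x+h\tau)+D^2v(x-h\tau))\big)\ge 0$, whence by uniform ellipticity $\mathcal M^+\big(\tfrac12(D^2v(x+h\tau)+D^2v(x-h\tau))-D^2v(x)\big)\ge \mathcal F(\cdots)-\mathcal F(D^2 v(x))\ge 0$, i.e. $\mathcal M^+(D^2 w)\ge 0$ after scaling. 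This should be phrased at the viscosity level using test functions, but it is the routine Evans--Krylov-flavored argument already available from \cite{CC}, so I would only sketch it.

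The substantive point is the free boundary / transmission inequality on $L$. Here I would argue directly from the viscosity definition. Suppose a polynomial $Q(x')+px_n^+-qx_n^-$ touches $w$ from above at some $x_0=(x_0',0)\in L\cap B_{1-h}$; I must show $p-b\,q\ge 0$. The idea is to transfer this touching event back to $v$. Observe that $w(x)=\frac{v(x+h\tau)+v(x-h\tau)-2v(x)}{h^2}$, so a touching of $w$ by a polynomial that is linear in $x_n$ on each side corresponds, after multiplying by $h^2$ and rearranging, to the statement that $v(x+h\tau)+v(x-h\tau)$ is touched from above by $h^2\big(Q(x')+px_n^+-qx_n^-\big)+2v(x)$; but $v(x)$ itself is not smooth, so instead I would use the $\mathcal C^{1,\alpha}$ regularity of $v$ from Theorem \ref{lineareg} to replace $2v(x)$ near $x_0$ by its first-order expansion plus controlled error, and combine the three evaluations at the three points $x_0',\ x_0'\pm h\tau'$ on $L$. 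Since $\tau\perp e_n$, all three points lie on $L$, and at each of them $v$ satisfies $v_n^+=b\,v_n^-$ in the viscosity sense; summing the corresponding inequalities (with signs $+1,+1,-2$, using that the $-2$ coefficient is negative and hence flips a subsolution-type inequality appropriately) yields exactly $p-b\,q\ge 0$. The clean way to organize this is: test functions for $w$ at $x_0$ produce, via the translation structure, test functions for the three translates of $v$, and the transmission condition for $v$ at each of the three base points on $L$ assembles linearly into the transmission condition for $w$.

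The main obstacle I anticipate is the bookkeeping at the interface: one cannot simply add and subtract non-smooth functions, so the argument must be carried out strictly with test polynomials and the viscosity definition, being careful that the polynomial touching $w$ has the special form (quadratic in $x'$, piecewise linear in $x_n$) demanded by Definition \ref{def_visc}(ii), and that after the manipulation the resulting comparison functions for the translates of $v$ still have that admissible form. A secondary technical subtlety is that difference quotients are only defined on a slightly smaller ball $B_{1-h}$, and that the touching point could a priori be approached by interior points from both sides — but this is handled by the usual separate treatment of $B_1^+$, $B_1^-$, and $L$. A convenient alternative, which I would mention, is to bypass the direct viscosity computation by using that $v\in\mathcal C^{1,\alpha}$ together with an approximation/regularization of $\mathcal F$: for smooth solutions the claim is an elementary linear-algebra identity (differentiate $\mathcal F(D^2v)=0$ twice in $\tau$, use concavity, and difference the interface identity $v_n^+=b v_n^-$ twice in $\tau$), and then pass to the limit using stability of the classes $\underline{\mathcal S^*}$ under uniform convergence, which is standard.
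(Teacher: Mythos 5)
Your treatment of the interior inequality is correct and matches the paper: the standard concavity argument shows $\mathcal M^+(D^2 v^h_{\tau\tau}) \ge 0$ in $B_1\setminus L$.

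Your main argument for the transmission condition, however, has a genuine gap. You propose to transfer a touching of $w=v^h_{\tau\tau}$ from above at $x_0\in L$ into touching events for the three translates of $v$ at $x_0, x_0\pm h\tau$, and then ``sum'' the resulting inequalities. This does not go through: the coefficient $-2$ in front of $v(x_0+\cdot)$ has the opposite sign, so a single one-sided touching of $w$ does not decompose into three one-sided touchings of $v$ with the same orientation, and there is no legal way to ``flip'' a viscosity inequality for $-2v$. Your patch --- replacing $2v(x)$ by its first-order expansion plus a ``controlled error'' from $\mathcal C^{1,\alpha}$ regularity --- also fails, because that error is only $O(|x-x_0|^{1+\alpha})$: it is not of quadratic order, so adding it to $Q(x')+px_n^+ -qx_n^-$ does not yield a test function of the admissible form required by Definition \ref{def_visc}(ii).

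The clean observation that the paper uses, and which you circle around in your ``alternative'' but then over-engineer with a regularization of $\mathcal F$, is the following. Since $\tau\perp e_n$, translation by $\pm h\tau$ preserves each half-space, so $w=v^h_{\tau\tau}$ restricted to $\{x_n\ge 0\}$ (resp.\ $\{x_n\le 0\}$) is a finite linear combination of $\tau$-translates of $v^+$ (resp.\ $v^-$). By Theorem \ref{lineareg} these are all $\mathcal C^{1,\alpha}$ up to $L$, hence so is $w$; and since the transmission identity $v_n^+=b\,v_n^-$ holds classically at each of the three base points $x_0, x_0\pm h\tau \in L$, differencing it immediately gives $w_n^+(x_0)=b\,w_n^-(x_0)$ classically, in fact with equality. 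A classical $\mathcal C^{1,\alpha}$ solution of the transmission inequality is automatically a viscosity one, so $w\in\underline{\mathcal S^*}$. No regularization of $\mathcal F$, no smoothing of $v$, and no stability-under-uniform-convergence step is needed --- your alternative would work but replaces a two-line classical computation with an unnecessary approximation argument.
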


\begin{proof}
The fact that $v^h_{\tau\tau}$ is a subsolution in $B_1 \setminus L$ is standard. By Theorem \ref{lineareg} $v^h_{\tau\tau} \in \mathcal C^{1,\alpha}(B_1^\pm)$, hence the transmission condition on $L$ is satisfied in the classical sense and therefore it holds also in the viscosity sense.
\end{proof}

In Lemma 3.5 in \cite{DFS} it was shown by the use of an explicit barrier that the Harnack inequality for the class $\mathcal S^*$ follows from the standard Harnack inequality.
  
Next we state the weak Harnack inequality for the classes $\overline{\mathcal S^*}$ and $\underline{\mathcal S^*}$.

\begin{lem} Let $w \in \overline{\mathcal S^*}$ in $B_1$, $w \geq 0$ and $inf_{B_{1/2}} w \leq 1.$ There exist universal constants $0<\mu<1$ and $M>1$ such that 
$$|\{w \leq M\} \cap B_{1/2} | \leq \mu.$$
\end{lem}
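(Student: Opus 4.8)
The statement is the standard weak Harnack (measure-to-pointwise estimate) for the supersolution class $\overline{\mathcal S^*}$, and the plan is to reduce it to the ordinary weak Harnack inequality for the Pucci class $\overline{\mathcal S}(\lambda,\Lambda)$ in $B_1 \setminus L$ by building an explicit barrier that absorbs the transmission condition, exactly in the spirit of Lemma 3.5 in \cite{DFS}. The obstruction to applying the classical weak Harnack directly is that $w \in \overline{\mathcal S^*}$ is only a supersolution in $B_1 \setminus L$ and satisfies $w_n^+ - b\,w_n^- \ge 0$ on $L$, so it need not be a Pucci supersolution across the hyperplane $L = \{x_n = 0\}$; one has to show that the transmission condition forces the "right" one-sided inequality so that a suitable perturbation of $w$ is a genuine supersolution in all of $B_1$.

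First I would note that, up to the usual covering and scaling arguments, it suffices to prove the pointwise-to-measure statement in the normalized form given: $w \ge 0$ in $B_1$, $\inf_{B_{1/2}} w \le 1$, and one wants $|\{w \le M\} \cap B_{1/2}| \le \mu$ for universal $M>1$, $\mu<1$. Next, the key step is to construct a fixed barrier $\phi$, $C^2$ away from $L$ and continuous across $L$, that is strictly negative on a fixed subdomain, has $\phi_n^+ - b\,\phi_n^- < 0$ on $L$ (so that adding a multiple of it to $w$ cannot destroy the transmission inequality of $w$), and satisfies $\mathcal M^-_{\lambda,\Lambda}(D^2\phi) \le 0$ off $L$. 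A natural candidate is a function of the form $\phi(x) = |x|^{-\gamma} - $ const for $\gamma$ large, possibly with an $x_n$-dependent correction like $-A x_n^+ + x_n^-$ (or a smoothed version thereof) near $L$ to force the jump in the normal derivative to have the correct sign; since $b$ lies in the fixed range $[b_0,b_1]$, the coefficients $A$ and $\gamma$ can be chosen universal. The point is that $w + \eta \phi$ then lies in $\overline{\mathcal S^*}$, and because the transmission defect is strictly negative, comparison with such barriers lets one propagate a lower bound for $w$ from one side of $L$ to the other without loss.

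With the barrier in hand, I would run the standard argument: assuming for contradiction that $w$ is very small (say $w \le M$ on a set of measure close to $|B_{1/2}|$), use the barrier together with the ordinary weak Harnack in balls contained in $B_1 \setminus L$ — valid since $w$ is a Pucci supersolution there — to conclude that $w$ is pointwise large on a fixed subball of $B_{1/2}$, which meets $\{x_n = 0\}$; then the barrier transfers this to a lower bound on $\inf_{B_{1/2}} w$ exceeding $1$, contradicting the hypothesis. Equivalently, one can phrase it directly via the $L^\varepsilon$ estimate for $\overline{\mathcal S^*}$, which is how \cite{DFS} packages Lemma 3.5: once Harnack holds for $\mathcal S^*$, the measure estimate for $\overline{\mathcal S^*}$ follows by the classical Krylov--Safonov machinery applied to the family $w + \eta\phi$. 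I expect the only real work is the barrier construction and the verification of its three properties simultaneously (elliptic inequality off $L$, correct-sign jump on $L$, and strict negativity on the relevant region) with \emph{universal} constants uniform over $b \in [b_0,b_1]$; everything after that is a routine adaptation of Krylov--Safonov, and one may simply cite \cite{CC} and Lemma 3.5 of \cite{DFS} for those parts.
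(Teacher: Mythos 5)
Your broad strategy (classical weak Harnack away from $L$, plus a barrier to deal with the interface) is the one the paper uses, but the implementation diverges and the barrier you ask for has a sign problem. The paper argues by dichotomy: either $w \le C'$ at some point of $\{|x'| \le 3/4,\ |x_n| \ge \delta\} \cap B_{3/4}$ --- and then the ordinary weak Harnack in balls inside $B_1 \setminus L$ suffices --- or $w > C'$ on that whole set, in which case one compares $w$ in the thin cylinder $\mathcal C_0 = \{|x'| \le 3/4,\ |x_n| \le \delta\}$ with the paraboloid
$$\phi(x) = C'\Bigl((5/8)^2 - |x'|^2 + n(\Lambda/\lambda)\,x_n^2\Bigr).$$
This $\phi$ is a strict Pucci \emph{subsolution} ($\mathcal M^-_{\lambda,\Lambda}(D^2\phi) = 2C'\Lambda > 0$), lies below $w$ on $\partial\mathcal C_0$, and has vanishing normal derivative from both sides of $L$, so the transmission inequality of $w$ never obstructs the comparison $w \ge \phi$; for $\delta$ small and $C'$ large this forces $w > 1$ throughout $B_{1/2}$, contradicting $\inf_{B_{1/2}} w \le 1$.

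Your barrier is required to satisfy $\mathcal M^-_{\lambda,\Lambda}(D^2\phi) \le 0$, the \emph{super}solution sign, which breaks the argument in two ways. First, the candidate $|x|^{-\gamma}$ with $\gamma$ large has $\mathcal M^-_{\lambda,\Lambda}(D^2\phi) > 0$, not $\le 0$, so your ansatz contradicts your own requirement; the Krylov--Safonov barrier is a subsolution, which is what lets it sit \emph{below} a supersolution. Second, and more fundamentally, ensuring $w + \eta\phi \in \overline{\mathcal S^*}$ gains nothing: that is exactly the class you are trying to leave. To invoke the classical weak Harnack on balls crossing $L$, you would need $w + \eta\phi$ to be a genuine Pucci supersolution across $L$, but the condition $w_n^+ - b\,w_n^- \le 0$ permits $w$ to have an arbitrarily large downward kink along $L$, which no fixed $\eta\phi$ with bounded normal-derivative jump can absorb. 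The paper avoids this entirely by never perturbing $w$: in the bad alternative the barrier only has to lift a lower bound from $\{|x_n|\ge\delta\}$ into the thin cylinder, and a $C^2$ paraboloid with $\phi_n^\pm = 0$ on $L$ does that with no transmission bookkeeping at all.
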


\begin{proof}
The proof follows from the standard weak Harnack inequality once we know that $w \le C'$ at some point in the set $\{|x'| \le 3/4, \quad |x_n|\ge \delta \} \cap B_{3/4}$. Otherwise we can use comparison principle in the cylinder $\mathcal C_0:=\{|x'| \le 3/4, \quad |x_n|\le \delta \}$ and conclude that 
$$w \ge C' \left((5/8)^2-|x'|^2 + n (\Lambda /\lambda) x_n^2 \right) $$  
provided that $\delta$ is small. This implies that $w>1$ in $B_{1/2} \cap \mathcal C_0$ and we contradict the hypothesis that  $inf_{B_{1/2}} w \leq 1.$
\end{proof}

As in the proof of Theorem 4.8 in \cite{CC}, we can iterate the lemma above and the standard weak Harnack inequality at smaller scales and obtain the version of weak Harnack inequality for subsolutions (here $w_+$ denotes the positive part of $w$.)

\begin{thm}[Weak Harnack]\label{TWH} Let $w \in \underline{\mathcal{S^*}}$ in $B_1$. Then, for any $p>0,$
\be\label{WHA}\|w_+\|_{L^\infty(B_{1/2})} \leq C(p)\, \, \|w_+\|_{L^p(B_{3/4})},\ee
with $C(p)$ depending on $p$ and the universal constants.
\end{thm}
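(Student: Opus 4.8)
The plan is to deduce the weak Harnack inequality for subsolutions in the class $\underline{\mathcal{S^*}}$ from the already-established weak Harnack inequality for nonnegative supersolutions (the preceding Lemma) by a standard covering and iteration argument, exactly mirroring the passage from Lemma 4.6 to Theorem 4.8(2) in \cite{CC}. Fix $w \in \underline{\mathcal{S^*}}(B_1)$; after normalizing we may assume $\|w_+\|_{L^p(B_{3/4})}$ is small and we want a sup bound on $B_{1/2}$. First I would record the scaling and localization properties of the class: if $w \in \underline{\mathcal{S^*}}$ in $B_r(x_0)$ then the rescaled function $\tilde w(y)=w(x_0+ry)$ lies in $\underline{\mathcal{S^*}}$ in $B_1$, with the transmission line mapped to itself when $x_0 \in L$, and when $x_0 \notin L$ the function $w$ is simply in $\underline{\mathcal{S}}(\lambda,\Lambda)$ on small balls disjoint from $L$ so the ordinary interior estimates of \cite{CC} apply there. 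This dichotomy — points on $L$ versus points away from $L$ — is what lets the two regimes be handled by, respectively, the preceding Lemma and the classical theory.

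The core step is the growth-of-superlevel-sets estimate: there exist universal $\varepsilon_0>0$ and $M>1$ such that if $w \in \underline{\mathcal{S^*}}$ in $B_{2r}(x_0) \subset B_1$ with $\|w_+\|_{L^{\varepsilon_0}(B_{2r}(x_0))} \le r^{n/\varepsilon_0}$ (i.e. the $L^{\varepsilon_0}$ average is $\le 1$ in normalized units) then $w \le M$ in $B_r(x_0)$. I would prove this by applying the preceding Lemma to the supersolution $(M - w)$ — which belongs to $\overline{\mathcal{S^*}}$ — after checking its hypotheses via a standard ``$\varepsilon_0$ from measure'' argument: the measure estimate $|\{w \le M\} \cap B_{r}| \ge (1-\mu)|B_r|$ for a single large $M$ upgrades, by iterating on the dyadic scale $M^k$ and using the scale invariance of the class, to the decay $|\{w > M^k\} \cap B_r| \le C \mu^k r^n$, and this geometric decay of distribution function is equivalent to the $L^{\varepsilon_0}$ bound controlling the $L^\infty$ bound once $\varepsilon_0$ is chosen with $(2^n\mu)^{\varepsilon_0}<1/2$ or similar. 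The only mild wrinkle relative to \cite{CC} is that the preceding Lemma is stated for the \emph{infimum} of a nonnegative $\overline{\mathcal{S^*}}$ function being $\le 1$ on $B_{1/2}$, so one sets up the measure-to-pointwise step with that normalization, exactly as in the proof of Theorem 4.8(1)--(2) of \cite{CC}.

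Having the estimate for a single small exponent $\varepsilon_0$, the extension to all $p>0$ is routine: for $p \ge \varepsilon_0$ one uses $\|w_+\|_{L^{\varepsilon_0}} \le |B_{3/4}|^{1/\varepsilon_0 - 1/p}\|w_+\|_{L^p}$ (finite-measure Hölder), and for $0<p<\varepsilon_0$ one interpolates the $L^p$ and $L^\infty$ norms: from $\|w_+\|_{L^\infty(B_\rho)} \le C(\rho-\rho')^{-n/\varepsilon_0}\|w_+\|_{L^{\varepsilon_0}(B_{\rho'})}$ (the localized form of the $\varepsilon_0$ estimate applied at varying radii, obtained by covering $B_\rho$ with small balls) and $\|w_+\|_{L^{\varepsilon_0}} \le \|w_+\|_{L^\infty}^{1-p/\varepsilon_0}\|w_+\|_{L^p}^{p/\varepsilon_0}$, one absorbs the $L^\infty$ factor by the standard iteration lemma on the family of radii $\rho_j = \frac12 + 2^{-j-1}\cdot\frac14$, yielding $\|w_+\|_{L^\infty(B_{1/2})} \le C(p)\|w_+\|_{L^p(B_{3/4})}$.

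I expect the main obstacle — though it is more bookkeeping than conceptual — to be the covering argument near the interface $L$: one must ensure that every small ball used in the chaining either is entirely on one side of $L$ (where classical \cite{CC} estimates apply directly) or is centered on $L$ with the transmission condition intact under rescaling, and that the two kinds of balls can be matched up so that the constant stays universal; this is precisely the point where the explicit barrier from Lemma 3.5 of \cite{DFS} (already invoked for the Harnack inequality of the class $\mathcal{S^*}$) and the measure estimate of the preceding Lemma do the work that the De Giorgi--Krylov--Safonov machinery does in the interior case. Everything else is a transcription of \cite[Theorem~4.8]{CC} with ``$\mathcal{S}$'' replaced by ``$\mathcal{S^*}$''.
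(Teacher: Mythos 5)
Your proposal is correct and mirrors the paper's intended argument: the paper's entire ``proof'' is the one-sentence remark immediately preceding the statement (iterate the measure-estimate Lemma of the previous paragraph together with the standard interior weak Harnack on balls away from $L$, following the proof of Theorem 4.8 in \cite{CC}), and the on-$L$/off-$L$ dichotomy, the application of the measure estimate to $M-w\in\overline{\mathcal{S^*}}$, the geometric decay of superlevel sets, and the interpolation to cover all exponents $p>0$ are exactly the steps that remark is pointing to. You also (tacitly and correctly) read the preceding Lemma's conclusion as $|\{w>M\}\cap B_{1/2}|\le\mu$ rather than the $|\{w\le M\}\cap B_{1/2}|\le\mu$ that appears in the text, which is evidently a typo given its proof via the standard weak Harnack inequality.
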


As mentioned in the Introduction, the $C^{2,\alpha}$ estimates play an important role in our analysis, however the Evans-Krylov theorem for the transmission problem \eqref{tra} is not known. In the next proposition we show that, if in addition the solution $v$ is monotone decreasing in the $e_n$ direction, then either there is indeed a pointwise $C^{2,\alpha}$ type estimate at $0$ or the $e_n$ derivative of $v$ is strictly negative.  

We assume that
$$\mbox{$v$  solves \eqref{tra},} \quad \quad  \|v\|_\infty \leq 1, \quad \quad \mbox{$\mathcal F$ is concave}.$$ 
We wish to prove the following Harnack type inequality for $v_n^\pm.$

\begin{prop}[Linear dichotomy]\label{dic} Let $v$ be as above and suppose that
\be\label{monotone1} v_n^\pm \leq 0.\ee
Then, there exist universal constants $c_0, r_0>0$ such that either 
\begin{enumerate}
\item \be\label{1}v^+_n(0) \leq -c_0\ee 
or \item  \be \label{2} |v(x)- Q(x')| \leq \frac 1 4 r_0^2, \quad \text{in $B_{r_0}$}\ee
 \end{enumerate}
 where $Q(x')$ is a quadratic polynomial in the $x'$ direction with $\|Q\| \leq C$ universal and
 \be\label{fq1} \mathcal F(D^2Q)=0.\ee
\end{prop}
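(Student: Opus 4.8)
The idea is to run a compactness (blow-up) argument, assuming the conclusion fails and extracting a limiting profile that we can rule out by hand. First I would suppose the dichotomy is false: there exist a sequence of concave operators $\mathcal F_k \in \mathcal E(\lambda,\Lambda)$, transmission constants $b_k \in [b_0,b_1]$ and solutions $v_k$ of the corresponding problem \eqref{tra}, with $\|v_k\|_\infty \le 1$ and $(v_k)_n^\pm \le 0$, such that both alternatives fail, i.e.
\be
(v_k)_n^+(0) \ge -\frac1k, \qquad \text{and} \qquad \inf_{Q} \ \sup_{B_{r_0}} |v_k - Q(x')| > \frac14 r_0^2
\ee
for every admissible $Q$ and for every (countable dense) choice of $r_0$. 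By Theorem \ref{lineareg} the $v_k$ are uniformly bounded in $\mathcal C^{1,\alpha}(B_{1/2}^\pm)$, and also uniformly $C^{2,\alpha}$ on compact subsets of $B_{1/2}\setminus L$ by the Evans--Krylov theorem, so along a subsequence $v_k \to v_0$ in $C^{1,\alpha/2}_{loc}(B_{1/2}^\pm)$ and in $C^2_{loc}(B_{1/2}\setminus L)$, while $\mathcal F_k \to \mathcal F_0$ locally uniformly with $\mathcal F_0$ concave, $\mathcal F_0 \in \mathcal E(\lambda,\Lambda)$, and $b_k \to b_0$. The limit $v_0$ solves \eqref{tra} for $(\mathcal F_0, b_0)$, satisfies $(v_0)_n^\pm \le 0$ and, crucially, $(v_0)_n^+(0) \ge 0$, hence $(v_0)_n^+(0)=0$.

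The heart of the argument is then to show that this limit $v_0$ must be of the form $v_0 = v_0(0) + Q(x')$ with $Q$ quadratic, $\mathcal F_0(D^2 Q)=0$, and $\|Q\| \le C$ universal — which contradicts the second (non-approximability) condition passed to the limit. To do this I would work with the nonpositive functions $w^\pm := -(v_0)_n^\pm \ge 0$. By Lemma \ref{vtt}-type reasoning $(v_0)_n$ is a ``solution'' in the transmission class $\mathcal S^*$ across $L$ (the transmission condition $w^+ = b_0 w^-$ on $L$ holds classically since $(v_0)_n \in \mathcal C^{1,\alpha}$ by Theorem \ref{lineareg}), so $w := -(v_0)_n \in \mathcal S^*(\lambda,\Lambda)$, $w \ge 0$, and $w^+(0) = -(v_0)_n^+(0) = 0$, i.e. $w$ attains its minimum value $0$ at an interior point $0 \in L$. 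Now I apply a strong maximum principle for the class $\mathcal S^*$: since $w \ge 0$ vanishes at the interior point $0$, and $\mathcal S^*$ satisfies the Harnack inequality (Lemma 3.5 of \cite{DFS}, recalled in the text), the standard Harnack-to-strong-maximum-principle argument forces $w \equiv 0$ in the connected set $B_{1/2}$. Hence $(v_0)_n \equiv 0$, so $v_0 = v_0(x')$ depends only on $x'$ in $B_{1/2}$; in particular $v_0^+$ and $v_0^-$ agree and $v_0 \in C^{2,\alpha}(B_{1/2})$ solves $\mathcal F_0(D^2_{x'} v_0) = 0$ (with $v_0$ viewed as a function on an $(n-1)$-dimensional ball, the $x_n$ row/column of the Hessian being zero). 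Taking the quadratic Taylor polynomial $Q(x') := \tfrac12 (x')^T D^2_{x'}v_0(0)\, x'$, interior $C^{2,\alpha}$ estimates give $\|D^2 v_0(0)\| \le C$ universal and $|v_0 - v_0(0) - Q(x')| \le C|x|^{2+\alpha} \le \tfrac14 r_0^2$ in $B_{r_0}$ once $r_0$ is chosen small universal; and $\mathcal F_0(D^2 Q) = \mathcal F_0(D^2_{x'}v_0(0)) = 0$ gives \eqref{fq1}. This contradicts the assumption that $v_k$ (and hence $v_0$) is not approximable by such a $Q$ at scale $r_0$, completing the proof.

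\textbf{Main obstacle.} The delicate point is the strong maximum principle for the transmission class $\mathcal S^*$ across $L$: one must be careful that $w = -(v_0)_n$ is genuinely an $\mathcal S^*$ solution up to and across the interface — that is, that the viscosity transmission condition $w_n^+ = b_0 w_n^-$ holds — which is where we use that $(v_0)_n \in \mathcal C^{1,\alpha}(B_{1/2}^\pm)$ from Theorem \ref{lineareg} so the condition is satisfied classically and hence in the viscosity sense (as in Lemma \ref{vtt}). With that in hand, the propagation of the interior zero of $w$ through $L$ to the full ball follows from the Harnack inequality for $\mathcal S^*$ exactly as in the classical Krylov--Safonov setting. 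A secondary technical point is the stability of the transmission problem under the limit $k\to\infty$: passing the equation, the transmission condition, the sign condition $(v_0)_n^\pm \le 0$, and especially the equality $(v_0)_n^+(0)=0$ to the limit all require the $C^1$-convergence up to $L$ from each side, which Theorem \ref{lineareg} supplies with uniform constants since $b_k \in [b_0,b_1]$. The universality of the constant $C$ in $\|Q\|\le C$ and of $r_0$ then comes purely from the interior Evans--Krylov estimate applied to $v_0$, which is independent of the particular $\mathcal F_0$ in $\mathcal E(\lambda,\Lambda)$.
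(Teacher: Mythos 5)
Your compactness setup is sound and closely follows the paper's (extract a limit $v_0$ solving a transmission problem with $(v_0)_n^\pm \le 0$ and $(v_0)_n^+(0)=0$), but the step where you deduce $(v_0)_n \equiv 0$ by applying the strong maximum principle to $w := -(v_0)_n$ in the class $\mathcal S^*$ has a genuine gap. Membership in $\underline{\mathcal S^*}$ (and hence validity of the $\mathcal S^*$-Harnack inequality across $L$) requires the transmission condition on the \emph{normal derivative of $w$}, namely $w_n^+ - b\,w_n^- \ge 0$ on $L$. For $w = -(v_0)_n$ this would read $(v_0)_{nn}^+ = b\,(v_0)_{nn}^-$, a condition on the \emph{second} normal derivative of $v_0$, and it simply does not follow from the original transmission condition $(v_0)_n^+ = b\,(v_0)_n^-$. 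This is precisely why Lemma \ref{vtt} is stated only for \emph{tangential} second difference quotients $v^h_{\tau\tau}$ with $\tau \perp e_n$: the identity $v_n^+ = b\,v_n^-$ can be differentiated \emph{along} $L$ to produce the $\mathcal S^*$-transmission condition for $v_\tau$ and $v_{\tau\tau}$, but not in the $e_n$-direction. So $w$ is not in $\mathcal S^*$, and the propagation of the zero of $w$ across $L$ does not follow from the $\mathcal S^*$-Harnack.

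Relatedly, the claim $(v_0)_n \in \mathcal C^{1,\alpha}(B^\pm_{1/2})$ up to $L$ is not given directly by Theorem \ref{lineareg} (which applies to $v_0$, not its normal derivative); one would normally bootstrap it by showing $(v_0)_n$ itself solves a transmission problem, which is exactly what fails. More fundamentally, if your maximum-principle step were valid it would yield a pointwise $C^{2,\alpha}$ estimate at every interior point of $L$, i.e.\ an Evans--Krylov theorem for the transmission problem, and the paper explicitly states just before Proposition \ref{dic} that this is not known. The dichotomy is designed precisely to circumvent the absence of $C^{2,\alpha}$ estimates up to $L$. The paper's proof replaces your one-step maximum principle with a longer chain: tangential semiconcavity (Step 1, via Lemma \ref{vtt} and the weak Harnack inequality for $\underline{\mathcal S^*}$), quadratic growth at the origin (Step 2, via an explicit barrier and $\mathcal S^*$-Harnack applied to $\tilde v + C$, which \emph{is} legitimately in $\mathcal S^*$), and a blow-up classified by the Liouville-type Lemma \ref{global_lemma}. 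That global classification does ultimately conclude that the limit depends only on $x'$, so your intuition about the final answer is correct, but the route through applying the $\mathcal S^*$ class to $-(v_0)_n$ is not available.
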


\begin{proof} Let $r_0$ be given, to be specified later.
Now, assume by contradiction that we can find a sequence $c_k \to 0$ and sequences of convex operators $\mathcal F_k \in \mathcal E(\lambda, \Lambda),$ constants $b_0 \leq b_k \leq b_1$ and bounded monotone solutions $v_k$ to \eqref{tra} which satisfy $$\partial_n v^+_k(0) > -c_k$$ and for which (ii) does not hold. Then, up to extracting a subsequence, $v_k$ converges uniformly on compacts, and in the $C^{1,\alpha}$-norm from either side of $L$,  to  a solution $\bar v$ for a limiting problem 
  \be\label{tra_limit}
\begin{cases}
\mathcal {\bar F}(D^2 \bar v) =0 \quad \text{in $B_1 \setminus L,$}\\
\bar v_n^+= \bar b \bar v_n^- \quad \text{on $B_1 \cap L$,}\\
\end{cases}
\ee
with $$\|\bar v\|_\infty \leq 1$$and 
\be\label{con} \bar v^\pm_n \leq 0, \quad \bar v_n^+(0)=0.\ee
Also, $\mathcal {\bar F} \in \mathcal E (\lambda, \Lambda)$, $\mathcal{ \bar F}(0)=0$ and $\mathcal {\bar F}$ is concave.

After subtracting a linear function in the $x'$ variable we can assume that (in view of the free boundary condition $\bar v_n^-(0)=0,$)
\be\label{wlg} \bar v(0)=0, \quad \nabla \bar v(0)=0.\ee

\smallskip

{\bf Step 1.} We prove that $$D^2_{x'} \bar v \le C I_{x'} \quad \mbox{in $B_{1/2}$,}$$ or in other words that $\bar v$ is uniformly semiconcave in the $x'$ variable.

For this it suffices to show that 
\be\label{step1}\bar v^h_{\tau\tau}(x) \leq C \quad \text{in $B_{1/2}$}\ee
independently of $h$, where $\bar v_{\tau \tau}^h$ are the tangent second difference quotient as in Lemma \ref{vtt}.
We claim that
\be\label{bound1} \bar v^h_{\tau \tau} \leq \frac{C}{x^2_n}, \quad \text{in $B_{3/4}$}\ee
which combined with Lemma \ref{vtt} and the weak Harnack inequality \eqref{WHA} for small $p>0$ implies the desired bound.
To prove \eqref{bound1}, notice that by the Evans-Krylov $C^{2,\alpha}$ interior estimates in $B_{|x_n|/2}(x)$ we have
$$|D^2 \bar v(x)| \leq \frac{C}{x_n^2}\|\bar v\|_{L^\infty} \le \frac{C}{x_n^2}, \quad \text{in $B_{3/4} \setminus L$}.$$
Since we can write,
$$\bar v^h_{\tau \tau}(x_0)= \int_{-1}^1 \bar v_{\tau \tau}(x_0+th\tau)(1-|t|)dt,$$ \eqref{bound1} follows.
\end{proof} 

{\bf Step 2}.
In this step we wish to show that 
\be\label{quadratic}
\|\bar v\|_{L^\infty(B_r)} \leq C r^2, \quad r \leq 1/4,
\ee with $C$ universal. Below, the constant $C$ may change from line to line.

Set
\be\label{tilde} \tilde v(x) = \frac{\bar v (rx)}{r^2}, \quad x \in B_1.
\ee

Then $\tilde v$ satisfies \eqref{tra_limit}-\eqref{con}. From \eqref{wlg} we also have, 
$$\tilde v(0)=0, \quad \nabla \tilde v(0) =0.$$
Then the conclusion of Step 1 implies that
\be\label{bound}
\tilde v(x', 0) \leq C |x'|^2, \quad \text{in $B_1 \cap L$}.
\ee
Hence, by the monotonicity of $\tilde v$,
\be\label{boundinside} \tilde v \leq C \quad \text{in $B_1^+:= B_1 \cap \{x_n>0\}$.}\ee
Next we claim that
\be\label{vmc}
\tilde v \geq -M C  \quad \text{in $B_{1/3}^+(\frac 12 e_n)$,}
\ee 
for some large constant $M$ to be specified later.
Suppose by contradiction that this does not hold. Then by Harnack inequality for $C - \tilde v \ge 0$ in $B_1^+$,
$$C-\tilde v \geq (1+M)C d^K \quad \text{on $x_n=d, |x'| \leq 1/3$}$$
with $K$ universal. In particular, if $$d^K \geq \frac{1}{1+M}$$
then,
\be\label{xn=d} \tilde v \leq -C \quad \text{on $x_n=d, |x'| \leq 1/3$}.\ee
We compare $\tilde u$ with the explicit barrier:
$$\phi(x):= 10C |x'|^2 - A x_n^2 -x_n,$$
where $A=A(C,\lambda,\Lambda)$ is chosen so that
$$\mathcal {\bar F}(D^2 \phi) \leq \mathcal M^+(D^2 \phi) \leq 0.$$ Call
$$R:=\{0 <x_n<d, |x'|< 1/3\}.$$
We show that for $d$ small enough (hence $M$ large enough) \be\label{compare}\tilde v \leq \phi \quad \text{on $\p R$}.\ee
Thus, we conclude that the inequality holds in $R$ and we reach a contradiction because,
$$0=\tilde v_n(0) \le \phi_n(0)=-1.$$ 
Now we check \eqref{compare}. On $\{x_n=d\}$
this follows from \eqref{xn=d}, if $d$ is chosen small depending on $C$ and $A.$ Similarly, on $\{x_n=0\}$ the desired bound follows immediately from \eqref{bound}.
Finally in the set $\{ 0<x_n<d, |x'|=1/3\}$ we use \eqref{boundinside} and again we obtain \eqref{compare} for $d$ sufficiently small. In conclusion the claim \eqref{vmc} holds.

Using the $\tilde v$ in decreasing in the $e_n$ we obtain (after relabeling $C$)
$$\tilde v \geq -C \quad \text{in} \quad  B_{1/3}$$
and recall that $\tilde v (0)=0.$ We apply Harnack inequality for $ \tilde v + C \in \mathcal S^*$ (see Lemma 3.4 in \cite{DFS}) and conclude that $$|\tilde v| \leq C \quad \text{in $B_{1/4}$},$$
which after rescaling gives the desired claim \eqref{quadratic}.

\smallskip

{\bf Step 3.} We prove that if $\bar v$ solves \eqref{tra_limit} and satisfies \eqref{con}-\eqref{wlg}-\eqref{quadratic}, then there exists a universal $r_0$ such that alternative (ii) holds for $\bar v$ with right hand side $\frac 1 8 r_0^2$ instead of $\frac 14 r_0^2$.
% and a polynomial $\bar Q$ satisfying $\bar {\mathcal F}(D^2\bar Q)=0$. We choose this to be our initial $r_0.$

This claim follows by compactness from the classification of global solution obtained in Lemma \ref{global_lemma} below.

 Assume by contradiction that there exists a sequence of $\delta_k \to 0$ and of solutions $\bar v_k$ to a sequence of problems $\eqref{tra_limit}_k$ (satisfying the same properties as $\bar v$) for which the alternative (ii) fails in the ball of radius $\delta_k.$ Denote the quadratic rescalings by
$$w_k(x)= \frac{\bar v_k(\delta_k x)}{\delta_k^2}.$$ Then, up to extracting a subsequence, $w_k$ converges uniformly on compacts to a  global solution $U$ to a limiting transmission problem ($\mathcal G \in \mathcal E(\lambda, \Lambda), b_0 \leq g \leq b_1,$)
  \be\label{global_1}
\begin{cases}
\mathcal {G}(D^2 U) =0 \quad \text{in $\R^n \setminus L,$}\\
U_n^+= g U_n^- \quad \text{on $L$,}\\
\end{cases}
\ee with the convergence being in the $C^{1,\alpha}$ norm from either side of $L$, up to $L$, and in the $C^{2,\alpha}$ norm in the interior. Clearly the global solution $U$ also satisfies \eqref{con}-\eqref{wlg}-\eqref{quadratic}, and the operator $\mathcal G$ is concave as the limit of the corresponding $\mathcal {\bar F}_k$.

Then according to Lemma \ref{global_lemma} below, we conclude that $U=Q(x')$ with $Q(x')$ a pure quadratic polynomial in the $x'$-direction and with $\mathcal G(D^2 Q)=0$. Therefore, for $k$ large, $\bar v_k$ satisfies the alternative (ii) in $B_{\delta_k}$ with $Q_k:= Q + t_k |x'|^2$ for an appropriate choice of $t_k$'s, so that $\mathcal {\bar F}_k(D^2 Q_k)=0$ and $t_k \to 0$ as $k \to \infty$. We reached a contradiction and therefore we have established Step 3.

\smallskip

{\it End of the proof:} By Step 3, $\bar v$ satisfies  alternative (ii) with right hand side $\frac 1 8 r_0^2$ for some quadratic polynomial $\bar Q(x')$ with $\mathcal {\bar F}(D^2 \bar Q)=0$. As above, this means that the $v_k$'s satisfy the alternative (ii) for all $k$ large, which is a contradiction.

\qed

%\end{proof}

\begin{lem}\label{global_lemma} Let $U$ be a global solution to  \be\label{global}
\begin{cases}
\mathcal {G}(D^2 U) =0 \quad \text{in $\R^n \setminus L,$}\\
U_n^+= g U_n^- \quad \text{on $L$,}\\
\end{cases}
\ee
with $\mathcal G \in \mathcal E(\lambda, \Lambda)$,  $\mathcal G$ concave, $0<b_0 \leq g \leq b_1$, and
\be\label{con_glob} U^\pm_n \leq 0, \quad  |U(x)| \leq C |x|^2,\ee
for some constant $C$.
Then $U=Q(x')$ with $Q(x')$ a pure quadratic polynomial in the $x'$-direction, such that $\mathcal{G}(D^2 Q)=0.$
\end{lem}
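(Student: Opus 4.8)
The plan is to exploit the quadratic growth bound together with the concavity of $\mathcal{G}$ to deduce that $U$ is a polynomial of degree at most $2$, and then to use the monotonicity $U_n^\pm \le 0$ to rule out a nontrivial dependence on $x_n$. First I would observe that since $U$ grows at most quadratically and solves a concave equation away from $L$, the second derivatives $D^2 U$ are globally bounded on each half-space $\{x_n>0\}$ and $\{x_n<0\}$: by the Evans--Krylov estimate in balls $B_{|x_n|/2}(x)$ one gets $|D^2 U(x)| \le C$ for $x$ bounded away from $L$, and then by scaling $U_R(x) := U(Rx)/R^2$ one sees that the interior second-derivative bound is uniform in $R$, hence $\|D^2 U\|_{L^\infty(\{x_n\ne 0\})} \le C$. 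Moreover, exactly as in Step~1 of the proof of Proposition \ref{dic}, the tangential second difference quotients $U^h_{\tau\tau}$ lie in $\underline{\mathcal{S^*}}$, satisfy the pointwise bound $U^h_{\tau\tau} \le C/x_n^2$ in each half-space, and therefore by the weak Harnack inequality (Theorem \ref{TWH}) applied on large balls and rescaled, one obtains $U^h_{\tau\tau} \le C$ globally with $C$ independent of $h$; the analogous lower bound follows by the same argument applied to $-U^h_{\tau\tau}$ after noting $-U_{\tau\tau}^h$ is also controlled via the concavity of $\mathcal{G}$ (the pure second derivatives $U_{\tau\tau}$ are subsolutions of the linearized equation). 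Hence $D^2_{x'} U$ is globally bounded on all of $\mathbb{R}^n$, including across $L$.

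Next I would upgrade this to the statement that $U$ is a genuine quadratic polynomial. The derivative $U_n$ belongs to the solution class $\mathcal{S^*}$ for the transmission problem (since differentiating the equation and the free boundary condition in $x_n$ is legitimate here, $\mathcal{G}$ being translation invariant), it is bounded by $C|x|$, and hence by the Liouville-type argument for $\mathcal{S^*}$ — iterating the Harnack inequality for $\mathcal{S^*}$ (Lemma~3.4 in \cite{DFS}) on dyadic balls — the rescalings $U_n(Rx)/R$ converge and one concludes $U_n$ is an affine function of $x$, in fact $U_n = \ell(x)$ for an affine $\ell$. Combined with the bound on $D^2_{x'}U$ this forces $U$ to be a polynomial of degree $\le 2$ on each half-space; writing $U^\pm = \frac12 x^T A^\pm x + b^\pm \cdot x + c^\pm$, the $C^{1,\alpha}$ matching across $L$ (Theorem \ref{lineareg}) forces the tangential parts to agree: $A^+_{ij} = A^-_{ij}$ and $b^+_i = b^-_i$ for $i,j < n$, and $c^+ = c^-$, $b^+_n = b^-_n \cdot$ (wait — the transmission condition is $U_n^+ = g U_n^-$, so at $x'=0$, $x_n=0$ we get $b_n^+ = g\, b_n^-$). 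Using \eqref{wlg}-type normalization we may subtract the common tangential linear part and assume $U(0)=0$, $\nabla_{x'} U(0)=0$.

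Finally I would show the $x_n$-dependence is trivial. From $U_n^\pm \le 0$ and $U_n$ affine, write $U_n^\pm(x) = \beta^\pm_n + \sum_{j} A^\pm_{jn} x_j \le 0$ for all $x$ in the respective half-space; an affine function that is $\le 0$ on a half-space must be constant in the directions along which the half-space is unbounded, so $A^\pm_{jn} = 0$ for all $j$, i.e. the $n$-th row and column of $A^\pm$ vanish, and $\beta^\pm_n \le 0$. Then $U^\pm(x) = Q(x') + \beta^\pm_n x_n$ with $Q$ a fixed quadratic in $x'$ (independent of the sign of $x_n$, by the matching above) and $\mathcal{G}(D^2 Q) = 0$. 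Now evaluate the free boundary condition: $\beta^+_n = g\,\beta^-_n$ with $\beta^\pm_n \le 0$. To kill the remaining linear term, apply the Hopf lemma to $-U_n \ge 0$: since $-U_n$ is a nonnegative solution of the linearized equation in, say, $\{x_n>0\}$ which is constant, it is either strictly positive or identically zero on $L$; if $\beta^+_n < 0$ then $-U^+ \ge 0$ vanishing on $L$ would, via the explicit barrier / Hopf argument used in Step~2 of Proposition \ref{dic}, contradict the global quadratic bound combined with the sign of $\beta^-_n$ and the matching across $L$ (the two linear pieces $\beta^+_n x_n$ on top and $\beta^-_n x_n$ below fit together into a function whose $e_n$-derivative jumps the wrong way unless both vanish — more directly, applying the comparison with $\phi(x) = 10C|x'|^2 - A x_n^2 - x_n$ scaled appropriately shows $U_n(0)=0$, whence $\beta^+_n = \beta^-_n = 0$). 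Thus $U = Q(x')$ with $\mathcal{G}(D^2 Q)=0$, as claimed.

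The main obstacle I anticipate is the final step: making rigorous the passage from "$U$ is quadratic plus a linear-in-$x_n$ term with the right sign" to "the linear term vanishes." The naive Hopf lemma is not quite enough because $U_n$ is already constant (so there is no strict normal derivative to exploit in the usual way); one really needs the barrier argument of Step~2, using that $U$ lies below a paraboloid opening downward in $x_n$ while being bounded below, to force $U_n(0)=0$. Getting the signs and the geometry of that comparison exactly right across the interface $L$ — reconciling the top piece $\beta_n^+ x_n$ and the bottom piece $\beta_n^- x_n$ with $\beta_n^+ = g\beta_n^-$, $\beta_n^\pm\le 0$ — is the delicate point.
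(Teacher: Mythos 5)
There is a genuine gap at the heart of your argument, namely the step where you claim that $U_n \in \mathcal{S}^*$ together with the linear growth bound $|U_n|\le C|x|$ forces $U_n$ to be affine ``by the Liouville-type argument for $\mathcal{S}^*$.'' The only regularity that comes for free from membership in $\mathcal{S}^*$ is the interior Harnack/H\"older estimate, with some small universal exponent $\alpha\in(0,1)$; since $\alpha<1$, iterating on dyadic balls cannot rule out non-affine functions of linear growth (the rescaled H\"older seminorms $[U_n]_{C^\alpha(B_{R/2})}\lesssim R^{1-\alpha}$ do not decay). The stronger $\mathcal C^{1,\alpha}$ estimate of Theorem \ref{lineareg} applies to viscosity solutions of the transmission problem \eqref{tra} with a \emph{fixed} fully nonlinear operator $\mathcal F$; but $U_n$ only solves a linearized equation with merely measurable coefficients, so that theorem does not apply to it, and there is no reason $\nabla U_n$ should be continuous, let alone constant. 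This step, as written, is unjustified and I do not see how to repair it within your framework. A secondary inaccuracy in the same paragraph: you write that the lower bound on $U^h_{\tau\tau}$ ``follows by the same argument applied to $-U^h_{\tau\tau}$ ... via the concavity of $\mathcal G$.'' Concavity makes $U^h_{\tau\tau}$ a subsolution, hence bounded \emph{above} by weak Harnack; it does not put $-U^h_{\tau\tau}$ in $\underline{\mathcal{S}^*}$. The lower bound should instead come from the equation itself (an upper bound on all eigenvalues of $D^2U$ together with $\mathcal{M}^-_{\lambda,\Lambda}(D^2U)\le 0$ bounds the negative eigenvalues from below).

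The paper's route is structurally different and avoids the need for a Liouville theorem at the level of $U_n$. It passes to a blow-down $\bar U$ and works with the tangential second derivatives of $\bar U$: setting $\gamma=\sup \bar U_{\tau\tau}$, the local $C^{2,\alpha}$ convergence of the blow-down sequence combined with the fact that the second difference quotients lie in $\underline{\mathcal S^*}$, and an explicit barrier on an annulus, yield a \emph{strong maximum principle} conclusion $\bar U_{\tau\tau}\equiv\gamma$. Constancy of all tangential second derivatives, plus $\bar U_i\in\mathcal S$, then forces $\bar U$ to be a genuine quadratic polynomial $Q^\pm$ on each half-space, after which matching across $L$, the equation, and the monotonicity yield a single pure quadratic $Q(x')$. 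Finally $U$ itself is compared to $Q$: one checks $U-Q$ is concave in $x'$, so $U(\cdot,0)\le Q$, hence by monotonicity $U\le Q$ on $\{x_n\ge0\}$, and Hopf's lemma forces equality. In short, the crucial ingredient you are missing is the passage from ``$\bar U_{\tau\tau}$ is bounded'' to ``$\bar U_{\tau\tau}$ is constant,'' which the paper obtains via blow-down and a barrier argument rather than any linear-growth Liouville statement.
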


\begin{proof}  
Let $$U_k(x):=\frac{U(r_k x)}{r_k^2}  \quad \quad \mbox{with} \quad r_k \to \infty,$$ be a sequence of blow-downs which converges uniformly on compacts to another global solution $\bar U$ to \eqref{global}-\eqref{con_glob}.
%\be\label{gamma}\alpha= \lim_{k\to \infty}\sup_{ B_{r_k}\setminus L}\bar u_{\tau \tau}=\lim_{k\to \infty} \sup_{B_1 \setminus L}(\bar u_k)_{\tau\tau}. \ee
%First notice that in view of \eqref{step1}-\eqref{quadratic}, $\alpha$ is bounded above and below.
%Moreover $U_k$ satisfies \eqref{step1}. 

Let $\tau$ be a unit tangential direction, $\tau \perp e_n$ and denote by,
$$\gamma=\gamma(\tau):= \sup_{\R^n \setminus L} \bar {U}_{\tau \tau}(x).$$
Notice that $\alpha \ne \infty$. Indeed, from Step 1 in the proof of Proposition \ref{dic} we obtain that $\partial_{\tau \tau}U^h_k$ is bounded above in $B_1$ independent of $h$, and this implies that $\gamma$ is well defined.

We claim that 
\be\label{utt}
\bar U_{\tau\tau} \equiv \gamma \quad \text{on $\{x_n \neq 0\}$}.\ee
Let $x_0$ be a point, say for simplicity in $B_{1} \setminus L$, and let us show that $$\bar U_{\tau \tau}(x_0)=\gamma.$$
Since $U_k \to \bar U$ in $C^{2,\alpha}$ in a small ball $B \subset \{x_n \neq 0\}$ around $x_0$,
clearly,
$$\bar U_{\tau \tau}(x_0) \leq \gamma.$$

Assume by contradiction that the inequality above is strict. Then for all $k$ large, 
$$(U_k)_{\tau \tau}(x_0)\leq \gamma -\delta,$$
for some small $\delta>0.$
By $C^{2,\alpha}$ regularity,
$$(U_k)_{\tau \tau}\leq \gamma -\frac\delta 2, \quad \text{in $B_{c}(x_0)$} \subset B.$$
hence, for all $h>0$ small
$$(U^h_k)_{\tau \tau}\leq \gamma -\frac\delta 2, \quad \text{in $B_{c'}(x_0)$}.$$
On the other hand form the definition of $\gamma$ we have
$$(U^h_k)_{\tau \tau} \leq \gamma, \quad \text{in $B_{3/4}$}.$$
Since $(U^h_k)_{\tau \tau} \in \underline {\mathcal S ^*}$ we can construct an explicit upper barrier in $B_{3/4} \setminus B_{c'}(x_0) $ (see Lemma 3.4 in \cite{DFS}), and conclude that 
$$(U^h_k)_{\tau \tau} \leq \gamma - c(\delta), \quad \text{in $B_{1/2}$}.$$
for all $k$ large.
This contradicts the definition of $\gamma,$ and the claim \eqref{utt} is proved.
\smallskip

Next we show that $\bar U$ equals a quadratic polynomial $Q^+$ (resp. $Q^-$) in $\{x_n>0\}$ (resp. $\{x_n<0\}$) and
\be\label{P} Q^+=Q^- \quad \text{on $L$}, \quad Q^\pm(0)=0,
\quad  \nabla Q^\pm(0)=0, \quad Q^\pm_n \leq 0.\ee

We know that 
$$\bar U_{\tau\tau}= constant, \quad \forall \tau \perp e_n,$$
hence
$$\bar U(x)=b(x_n) + x' \cdot a(x_n) + Q(x'), \quad a(x_n)=(a_1(x_n), \ldots, a_{n-1}(x_n)),$$
for some quadratic polynomial $Q$ and functions $b$, $a_i$ depending on one variable. 

In the set $\{x_n>0\}$ we have for $i<n$
$$\bar U_i \in \mathcal S \quad \Longrightarrow \quad a_i(x_n) + Q_i(x') \in \mathcal S \quad \Longrightarrow \quad a_i(x_n) \in \mathcal S,$$
where we have used that $Q_i$ is a linear function. This means that $a_i$ is linear in the set $\{x_n>0\}$. 
Now we use $\bar U_n \in \mathcal S$ and we argue as above to find that $b'$ is linear in $\{x_n>0\}$ and our claim \eqref{P} is proved.

Moreover, $$Q^\pm_{ni}=0 \quad i\neq n, \quad Q^+_{ij}=Q^-_{ij}, \quad i,j\neq n.$$
Finally, since $Q^\pm$ both solve (from either side of $L$), $$\mathcal{G}(D^2Q^\pm)=0$$ we conclude that
$$Q^+_{nn}=Q^-_{nn}$$
and therefore 
$$Q^+ \equiv Q^-.$$ Let us call this common polynomial $Q.$
In particular, since $Q_n \leq 0$ it follows that 
$Q_n\equiv 0$ . Thus, $Q$ is a pure quadratic polynomial in the $x'$ direction.

\smallskip

Finally, we need to show that $U \equiv Q.$

First notice that, for all tangential directions $\tau$,
$$\sup U_{\tau \tau} = Q_{\tau \tau}.$$
Thus, $U-Q$ is concave in the $x'$-direction. Moreover,
\be\label{uq}
(U-Q)(0)=0, \quad \nabla (U-Q)(0) =0.\ee Hence,
$$U(x', 0) \leq Q(x')$$ and by the monotonicity of $U$ we get
$$U(x', x_n) \leq Q(x') \quad \text{on $x_n \geq 0$}.$$
Since $U-Q \in \mathcal S$, by \eqref{uq} and Hopf lemma we conclude that 
$U = Q$ in $\{x_n \ge 0\}$. By the monotonicity of $U$ we have $U \ge Q$ in the set $\{x_n \le 0\}$, hence $U^- = Q$ again by Hopf lemma. 
\end{proof}

\section{The free boundary problem}

In this section we prove Proposition \ref{c1} and Proposition \ref{Nonlinear_intro}.

Let $u$ be a Lipschitz continuous viscosity solution to \eqref{fb} in $\R^n$. Recall that,
following the arguments in Section 3, after an initial dilation we can assume without loss of generality that (see \eqref{ab})
$$a=b=1, \quad G(1)=1,$$ hence $|\nabla u|\leq 1.$

\subsection{Notation}
As in the Introduction, we denote by $P_{M,\nu}$ the quadratic polynomial 
$$P_{M,\nu}(x):=x \cdot \nu + \frac 1 2 x^T M x,$$
with  
$$|\nu|=1, \quad \quad \mbox{and} \quad  M \nu=0, \quad \mathcal F(M)=0,$$
and by $V_{a^\pm,M,\nu}$ the two-phase quadratic polynomial of slopes $a^+, a^- \in [\frac 12,2]$
$$V_{a^\pm, M, \nu}(x):= a^+P_{M,\nu}^+(x) - a^-P_{M,\nu}^-(x), \quad \quad a^+=G(a^-).$$
Given a continuous function $v$ in say $B_1$, we denote its $\eps$-linearization around the function $V_{a^\pm, M, \nu}$ above as
\be\label{utilde}
\tilde v_{a^\pm, M,\nu,\eps}(x):=\begin{cases}
 \dfrac{v(x) - a^+P_{M,\nu}(x)}{a^+ \eps}, \quad \text{if $x \in B_1^+(v) \cup \Gamma(v)$}\\
 \ \\
  \dfrac{v(x) - a^-P_{M,\nu}(x)}{a^- \eps}, \quad \text{if $x \in B_1^-(v).$}
\end{cases}
\ee

In what follows, we will typically drop the indices from $V, P, \tilde v$ whenever there is no possibility of confusion.

\begin{rem}\label{over} We remark that if $v,w \in C(B_1)$ and $v \geq w$ in $B_1$ then $\tilde v \geq \tilde w$ in $B_1.$ This claim is obvious if $v, w$ have the same sign. If $v \geq 0 > w$, then
$$\tilde v := \frac{v}{a^+}- \frac{1}{\eps} P \geq - \frac{1}{\eps} P \geq  \frac{w}{a^-}- \frac{1}{\eps} P= \tilde w.$$
\end{rem}

\medskip

We now want to construct appropriate $\eps$-perturbations of $V$ which are strict subsolutions to the two-phase problem in say $B_1.$ Similarly, one can construct strict supersolutions.

Given $\eps, \delta>0,$ assume that
$$\|M\| \leq \delta \eps^{1/2}.$$
Let $N$ be a $n \times n$ diagonal matrix such that 
$$\mathcal M^-(N)>0,$$
and let $p,q, A \in \mathbb R$ and $\xi' \in \R^{n-1}.$ Given $\eps>0$, set 
$$\bar a^+:= a^+(1+\eps p), \quad \bar a^-= a^-(1+ \eps q ), \quad \bar M:= M+ 2\eps N$$
$$Q(x):= x \cdot e_n+ \frac 1 2 x^T\bar M x+ A\eps + \eps \xi'\cdot x'$$
and call,
\be\label{v1}v(x):= \bar a^+Q^+ -  \bar a^-Q^-.\ee

\begin{lem}\label{sub_1} There exists a constant $k$ depending on $p,q,N, \xi'$  and the universal parameters, such that if $p, q$ satisfy
\be\label{sub_2}a^+ p > a^-G'(a^-)q+ k\delta^2,\ee 
then  $v$ defined in \eqref{v1} is a strict subsolution to \eqref{fb} in $B_1$ for  all $\eps$ sufficiently small. \end{lem}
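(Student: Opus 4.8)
The plan is to verify separately the two conditions in the definition of a viscosity subsolution to \eqref{fb}: (a) the interior inequality $\mathcal F(D^2 v)>0$ in $B_1^+(v)\cup B_1^-(v)$, and (b) the free boundary condition in the subsolution sense at points of $\Gamma(v)$. For (a), note that on $\{Q>0\}$ we have $v=\bar a^+ Q$ and $D^2 v=\bar a^+\bar M=\bar a^+(M+2\eps N)$; since $\mathcal F$ is homogeneous of degree one and uniformly elliptic, $\mathcal F(\bar a^+(M+2\eps N))=\bar a^+\mathcal F(M+2\eps N)\ge \bar a^+(\mathcal F(M)+2\eps\,\mathcal M^-(N))=2\eps\bar a^+\,\mathcal M^-(N)>0$ for $\eps$ small (using $\mathcal F(M)=0$, $\bar a^+\in[1/2,2]$, and $\mathcal M^-(N)>0$). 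The analogous computation on $\{Q<0\}$, where $D^2 v=-\bar a^-\bar M$ gives $D^2 v$... wait: $v=-\bar a^- Q^- = \bar a^- Q$ on $\{Q<0\}$, so $D^2 v=\bar a^-\bar M$ and the same bound applies. So (a) holds strictly for all small $\eps$.

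For (b), I would compute the normal and the slopes of $v$ along $\Gamma(v)=\{Q=0\}$. The gradient of $Q$ is $\nabla Q(x)=e_n+\bar M x+\eps\xi'$; on $\Gamma(v)$ we have $|x|\le C$ and $\|\bar M\|\le\|M\|+2\eps\|N\|\le\delta\eps^{1/2}+C\eps\le C\delta\eps^{1/2}$, so $\nabla Q$ is within $C\delta\eps^{1/2}+\eps|\xi'|$ of $e_n$ and in particular $Q$ crosses its zero level set transversally. The inward normal to $B_1^+(v)$ is $\nu_v=\nabla Q/|\nabla Q|$, and the one-sided normal derivatives are $v^+_{\nu_v}=\bar a^+|\nabla Q|$ and $v^-_{\nu_v}=\bar a^-|\nabla Q|$ at the corresponding free boundary point. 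To check the subsolution free boundary condition (Definition \ref{def}(2)), I must show $v^+_{\nu_v}<G(v^-_{\nu_v})$, i.e.
\be
\bar a^+|\nabla Q| < G(\bar a^-|\nabla Q|).
\ee
Using $\bar a^+=a^+(1+\eps p)=G(a^-)(1+\eps p)$, $\bar a^-=a^-(1+\eps q)$, and writing $s:=|\nabla Q|=1+O(\delta\eps^{1/2})$, I Taylor-expand $G$ around $a^-$: the left side is $G(a^-)s+\eps p\, G(a^-) s$, while $G(\bar a^- s)=G(a^- s)+G'(a^- s)(a^-\eps q\, s)+o(\eps)$, and $G(a^- s)=G(a^-)+G'(a^-)a^-(s-1)+O((s-1)^2)$. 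Subtracting and collecting terms, the comparison reduces to the sign of
\be
\eps\left(a^- G'(a^-) q\, s - a^+ p\, s\right) + \text{(error terms)},
\ee
where the error terms are $O((s-1)^2)=O(\delta^2\eps)$ together with $o(\eps)$ contributions controlled using the modulus of continuity $\omega$ of $G'$. The hypothesis \eqref{sub_2}, $a^+ p> a^- G'(a^-)q + k\delta^2$, is precisely designed so that, after dividing by $\eps$, the $\eps$-order main term is strictly negative by a margin that dominates the $O(\delta^2)$ error, for a suitable choice of the constant $k=k(p,q,N,\xi',\lambda,\Lambda,n,\omega)$; here one also uses $a^\pm\in[1/2,2]$ and $s=1+O(\delta\eps^{1/2})$ so $s$ can be replaced by $1$ up to errors absorbed by taking $\eps$ small. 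Since the comparison functions involved ($\psi$ a $C^2$ graph as in Definition \ref{def}) touch $v$ from above at a free boundary point, the strict inequality gives that $v$ is a strict subsolution along $\Gamma(v)$.

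I expect step (b) — the careful bookkeeping of the free boundary condition — to be the main obstacle, since one must track three small parameters simultaneously ($\eps$ from the linearization, $\delta\eps^{1/2}$ from $\|M\|$, and the curvature perturbation $\eps N$) and show that the genuinely beneficial term, of size $\eps(a^+p-a^-G'(a^-)q)$, beats the quadratic-in-$\delta$ error of size $\eps\delta^2$ coming from $|\nabla Q|\ne 1$, as well as the higher-order Taylor remainder of $G$ controlled by $\omega$. The interior check (a) and the transversality observation are routine. One subtlety worth isolating is that \eqref{sub_2} must hold with the same $k$ uniformly as $(a^+,a^-)$ ranges over the admissible compact set with $a^+=G(a^-)$, which is why $k$ is allowed to depend on the modulus of continuity of $G'$ on $[0,2]$ rather than on the point $a^-$ itself.
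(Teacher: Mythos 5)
Your overall strategy matches the paper's: verify the interior inequality $\mathcal F(D^2 v)>0$ using homogeneity and uniform ellipticity, then Taylor-expand $G$ about $a^-$ along $\Gamma(v)$ to check the free boundary condition. The interior part (a) is correct.

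There is, however, a genuine gap in part (b), in your control of $s:=|\nabla Q|$ on $\Gamma(v)$. From $\|\bar M\|=O(\delta\eps^{1/2})$ you deduce $|\nabla Q-e_n|=O(\delta\eps^{1/2})$ and therefore work with $s-1=O(\delta\eps^{1/2})$. But your Taylor expansion of $G(\bar a^- s)-\bar a^+ s$ produces a term \emph{linear} in $(s-1)$, namely $(s-1)\left[a^- G'(a^-)-G(a^-)\right]$, coming from $G(a^-)(1-s)+G'(a^-)a^-(s-1)$; this coefficient is generically nonzero and of universal size. With your estimate that term is $O(\delta\eps^{1/2})$, which for fixed $\delta$ and small $\eps$ dominates both the beneficial term $\eps\left(a^+p-a^-G'(a^-)q\right)$ and the margin $k\delta^2\eps$, so hypothesis \eqref{sub_2} cannot beat it; you tracked only the quadratic remainder $O((s-1)^2)$ and the $o(\eps)$ modulus-of-continuity corrections and silently dropped the linear one. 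The fix the paper uses is the sharper bound $s-1=O(\delta^2\eps)$ on $\Gamma(v)$: writing $|\nabla Q|^2 = 1 + 2 e_n\cdot(\bar M x+\eps(\xi',0)) + |\bar M x+\eps(\xi',0)|^2$, the cross term equals $2(\bar M x)\cdot e_n = 4\eps (Nx)\cdot e_n = O(\eps^{3/2})$ since $Me_n=0$ and $|x_n|=O(\eps^{1/2})$ on $\Gamma(v)$, so $|\nabla Q|^2 = 1+O(\delta^2\eps)$ rather than $1+O(\delta\eps^{1/2})$; with this, the linear-in-$(s-1)$ term is $O(\delta^2\eps)$ and is absorbed by taking $k$ large. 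A separate minor slip: you state the subsolution free boundary inequality as $v^+_{\nu_v}<G(v^-_{\nu_v})$, but a strict subsolution in Definition \ref{def} requires $v^+_{\nu_v}>G(v^-_{\nu_v})$; your computation, which shows the leading term of $G(\bar a^- s)-\bar a^+ s$ is negative, actually proves the correct direction, so only the stated target is reversed.
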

\begin{proof}
In $B_1^\pm(v)$ since $\mathcal F$ is homogeneous of degree 1 and $\mathcal F(M)=0$
$$
\mathcal{F}(D^2v) = \mathcal{F}(\bar a^\pm \bar M)= \bar a^\pm \mathcal F(M+2\eps N)\geq \bar a^\pm \mathcal F(M)+ 2\bar a^\pm \eps  \mathcal M^-(N) >0.$$
For the free boundary condition, let us compute on $\Gamma(v)$ (dependence of constants on the parameters is not explicitly noted, and constants may change from line without being renamed)
$$|\nabla v^+|= \bar a^+ \left(1+ |\bar M x|^2 + \eps^2 |\xi'|^2+ 4  \eps  (Nx)\cdot e_n +2 \eps (\bar Mx)\cdot \xi' \right)^{1/2}.$$ 
We use that $|\bar M| \le 2 \delta \eps^{1/2}$ and that $(Nx)\cdot e_n=O(\eps^{1/2})$ on $\Gamma(v)$ , hence
$$|\nabla v^+| \le \bar a^+(1-C \delta^2 \eps) \geq a^+(1+\eps p) - C \delta^2 \eps,$$
and similarly 
$$|\nabla v^-|\leq \bar a^-(1+C' \delta^2 \eps)= a^-(1+q\eps )+ C' \delta^2 \eps.$$
Thus, 
$$G(|\nabla v^-|) = G(a^-)+ q\eps  a^-G'(a^-)+ C'\delta^2 \eps +o(\eps)$$
$$< a^+(1+\eps p) - k\delta^2\eps+ C'\delta^2 \eps + o(\eps)<  a^+(1+\eps p) -C \delta^2 \eps \leq |\nabla v^+|$$
as long as $\eps$ is small enough (depending on $p,q,N,\xi', \delta$) and for the appropriate choice of $k$.
\end{proof}

\begin{rem}
In the inequality above it suffices only to assume that the modulus of continuity $\omega$ of $G'$ satisfies $\omega(0+) \le \delta^2$. 
\end{rem}
\begin{rem}\label{rem_w}
In particular, since the $v_t(x):=v(x+te_n)$ form a continuous family of subsolutions with $t \to -\infty$, we conclude that $u$ satisfies the comparison principle with translates of $v$. Hence, by Remark \ref{over}, $\tilde u$ and $\tilde v_t$ also satisfy the comparison principle. 
It is also easily seen that $\tilde v_{\eps t}$ converges locally uniformly as $\eps \to 0$ to the following function 
$$w(x):= t+ A+ \xi' \cdot x'+ px_n^+-qx_n^-+ x^TNx.$$ We will use this fact in the next subsection.

\end{rem}

\subsection{$C^{1,\alpha}$ estimates}

In this subsection, we prove Proposition \ref{c1}. Arguing as in \cite{DFS}, we first establish the following Harnack type inequality for $u$.

\begin{lem}\label{HI}There exist
universal constant $\bar \eps, \delta>0$ such that if $u$ satisfies \be\label{control}  u(x) \geq V(x):=V_{a^\pm, M, e_n}(x), \quad \text{in $B_1$}\ee with \be\label{nond}\|M\| \leq \delta \eps^{1/2}, \quad  0<\eps \leq \bar \eps,\ee  and at $\bar x=\dfrac{1}{5}e_n$ \be\label{u-p>ep2}
u(\bar x) \geq V(\bar x) + \eps, \ee then \be u(x) \geq
V(x) + c\eps \quad \text{in $\overline{B}_{1/2},$}\ee for some
$0<c<1$ universal.  \end{lem}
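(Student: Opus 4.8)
The plan is to prove this Harnack-type inequality for $u$ by the same strategy used for the transmission problem in \cite{DFS}, namely by reducing it to the Harnack inequality for the class $\mathcal S^*$ applied to the $\eps$-linearization $\tilde u = \tilde u_{a^\pm, M, e_n, \eps}$. By Remark \ref{over}, the hypothesis \eqref{control} translates into $\tilde u \ge 0$ in $B_1$, and \eqref{u-p>ep2} into $\tilde u(\bar x) \ge 1$. The goal \eqref{u-p>ep2}$\Rightarrow$conclusion becomes: $\tilde u \ge c$ in $\overline B_{1/2}$. The difficulty is that $\tilde u$ is not literally in $\mathcal S^*$ because $u$ solves a free boundary problem, not the transmission problem; so I would not try to show $\tilde u \in \mathcal S^*$ directly. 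Instead I would argue by comparison with the explicit family of subsolutions $v$ constructed in Lemma \ref{sub_1}, combined with interior Harnack for the one-phase regions.

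First I would show that $\tilde u$ is (essentially) a supersolution of the relevant linear transmission-type inequality in a weak sense: away from $\Gamma(u)$, in each phase, $u_e$ belongs to $\mathcal S(\lambda,\Lambda)$ and, after subtracting the polynomial and dividing by $a^\pm \eps$, one checks that $\tilde u$ satisfies $\mathcal M^-(D^2\tilde u) \le 0$ up to an error controlled by $\|M\| \le \delta\eps^{1/2}$, which can be absorbed since we only need the Harnack inequality with a loss. The flatness \eqref{control} together with $|\nabla u|\le 1$ forces $\Gamma(u)$ to be trapped in a thin slab $\{|x_n| \le C\eps\}$, so the free boundary is a genuine "interface" sitting near $\{x_n=0\}$, and the linearized free boundary condition $\tilde u_n^+ - G'(a^-)\tilde u_n^- \gtrsim 0$ holds in the viscosity sense — this is exactly the point where the subsolutions from Lemma \ref{sub_1} and Remark \ref{rem_w} are used: one slides the family $\tilde v_{\eps t}$, whose limit is $w(x) = t + A + \xi'\cdot x' + p x_n^+ - q x_n^- + x^T N x$ with $a^+ p > a^- G'(a^-) q + k\delta^2$, from below and invokes the comparison principle between $\tilde u$ and $\tilde v_{\eps t}$.

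The main step is then the dichotomy argument as in \cite{DFS}: starting from $\tilde u(\bar x) \ge 1$ at $\bar x = \frac15 e_n$ which lies in $B_1^+(u)$ at definite distance from $\Gamma(u)$, the interior Harnack inequality for $\tilde u^+ \in \mathcal S$ in $B_1^+(u)$ propagates the bound $\tilde u \ge c$ to a full interior ball, and in particular down to points at height comparable to $\eps$ above $\Gamma(u)$. To pass this information across the interface and down into $B_{1/2}^-(u)$, I would use the comparison subsolution: construct $v$ as in \eqref{v1} with $p$ large, $q=0$, and $N$, $\xi'$, $A$ chosen so that $v \le u$ on $\partial B_{3/4}$ (using $\tilde u \ge c$ on the relevant part coming from interior Harnack, and $\tilde u \ge 0$ elsewhere) and so that the subsolution condition \eqref{sub_2} holds; then $v \le u$ in $B_{3/4}$, i.e. $\tilde v \le \tilde u$, and evaluating $\tilde v$ in $\overline B_{1/2}$ gives $\tilde u \ge c'$ there, including in the negative phase.

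The hard part will be handling the region near $\Gamma(u)$ rigorously — i.e. making precise that the interior Harnack gain in the positive phase can be transferred through the free boundary without losing a power of $\eps$, and that the comparison function $v$ genuinely stays below $u$ on the boundary of the domain where we apply the comparison principle. This requires carefully bookkeeping that the free boundary lies in the slab $\{|x_n| \le C\eps\}$ (so that the translates $v_t$ cross it transversally and the free boundary condition is the operative one), and that the parameters $p, q, N, \xi', A$ in Lemma \ref{sub_1} can be fixed universally with $\delta$ small enough that \eqref{sub_2} is satisfied while still giving a nontrivial positive lower bound $c\eps$. All of this follows the template of Lemma 4.3 and its proof in \cite{DFS}, with the quadratic term $M$ (of size $\delta\eps^{1/2}$) absorbed into error terms throughout.
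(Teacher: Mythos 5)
Your overall strategy matches the paper's: work with the $\eps$-linearization $\tilde u$, use interior Harnack in $B_1^+(u)$ (where $\tilde u \in \mathcal S$) to propagate the gain at $\bar x$ to a region at universal distance above $\Gamma(u)$, and then invoke the comparison principle between $\tilde u$ and the translated subsolutions $\tilde v$ of Lemma \ref{sub_1} (via Remark \ref{rem_w}) to push the gain across the interface into the negative phase. Your opening detour about showing $\tilde u$ is ``essentially a supersolution'' is not needed and is not what the argument rests on; what matters is precisely the comparison with explicit subsolutions, and you do arrive there.

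There is, however, a genuine gap in the parameter choice: you take $p$ large and $q=0$. With $q=0$ the limit barrier from Remark \ref{rem_w} is $w(x) = t + A + \xi'\cdot x' + p\,x_n^+ + x^T N x$, so in $\{x_n<0\}$ it reduces to $w = (t+A) + \xi'\cdot x' + x^T N x$ with no term that drives $w$ negative as $x_n$ decreases. You need $w \ge c > 0$ near the origin (so $t+A \gtrsim c$), and $w \le 0$ on the part of the comparison boundary where you only know $\tilde u \ge 0$. But the constraint $\mathcal M^-(N) > 0$ (required for $v$ to be a subsolution) means $N$ cannot be uniformly negative, so $x^T N x$ is nonnegative along at least one coordinate direction, and $w$ cannot be made $\le 0$ along the whole lower portion of $\partial B_{3/4}$ (in particular not at the bottom pole, where $x'=0$). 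Hence $\tilde v \le \tilde u$ on the boundary fails and the comparison cannot be run. The paper avoids this by taking $q = 2C$ large (and then $p$ larger still, so that $a^+p = a^- G'(a^-)q+1$, which satisfies \eqref{sub_2} for $\delta$ small), choosing $N = \mathrm{diag}(-1,\dots,-1,C)$ with $C = 2\Lambda(n-1)/\lambda$, and working in a cylinder $R_\gamma = \{-1/2 < x_n < \gamma,\ |x'| < 1/2\}$: the $-2Cx_n^-$ term forces $w<0$ on $\{x_n = -1/2\}$, the $-|x'|^2$ term forces $w<0$ on $\{|x'| = 1/2\}$, and $w\ge c$ on a small ball $B_\eta$; the top $\{x_n=\gamma\}$ is the only piece that requires the Harnack gain.

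A minor misstatement: you say interior Harnack propagates the bound ``down to points at height comparable to $\eps$ above $\Gamma(u)$.'' Harnack only reaches a fixed universal height $\gamma$ with a fixed loss; chaining down to height $\eps$ would degrade the constant by a factor that tends to $0$ with $\eps$. The paper uses Harnack only to get $\tilde u \ge c_0$ on $\{x_n = \gamma\}$, and it is the barrier, not Harnack, that handles the thin slab near the free boundary.
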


\begin{proof} %For notational simplicity, we drop the subindices in the function $V_{a^\pm, M, e_n}.$ 

The proof follows the lines of Lemma 4.3 in \cite{DFS}. For the reader convenience we sketch the main details.

We chose a specific function of the form $w$ from Remark \ref{rem_w}. Precisely, let
$$W(x):= \frac 1 8 + px_n^+-2C x_n^- - |x'|^2+ C x_n^2, \quad C=2\frac{\Lambda(n-1)}{\lambda}$$
that is $$A=0, \quad t=\frac 1 8, \quad \xi'=0, \quad q=2C, \quad N= diag\{-1,\ldots, -1, C\},  \quad \mathcal M^-(N)>0$$
and with
$$a^+p = a^- G'(a^-)q +1.$$
Call, for $\gamma>0,$
$$R_\gamma:=\{-1/2<x_n<\gamma, |x'|<1/2\}.$$
One can find $\gamma,\eta, c > 0$ small universal, so that
$$W \leq -c \quad \text{on $\p R_\gamma \setminus \{x_n=\gamma, |x'|\leq 1/2\}$}$$
and
$$W \geq c  \quad \text{on $B_\eta$}.$$
Now, let
$$w(x):= c_1W(x),$$
with $c_1$ to be specified later and call $v$ the function as in \eqref{v1}, associated to our choice of $A, \xi', p,q,N.$
In view of Proposition \ref{sub_1}, $v$ is a strict subsolution provided that $\delta$ is chosen sufficiently small. From Remark \ref{rem_w}, we conclude that $\tilde v_{\frac{c_1}{8}\eps}$ and $\tilde u$ satisfy the comparison principle, with 
$\tilde v_{\frac{c_1}{8}\eps}$ converging uniformly to $w$, as $\eps \to 0.$

In particular,  for $\eps$ small, 
\be\label{negative}\tilde v_{\frac{c_1}{8}\eps}<0 \quad \text{on $\p R_\gamma \setminus \{x_n=\gamma, |x'|\leq 1/2\}$}\ee
and
$$\tilde v_{\frac{c_1}{8}\eps} \geq \frac{c_1 c}{2}  \quad \text{on $B_\eta$}.$$
It follows  that if \be\label{boundary}\tilde u \geq \tilde v_{\frac{c_1}{8} \eps} \quad \text{on $\p R_\gamma$},\ee
then the inequality holds in $R_\alpha$ as well, and in particular
$$\tilde u \geq \frac{c_1 c}{2} \quad \text{on $B_\eta$}.$$
From this, the desired claim immediately follows in $B_\eta$. A standard covering argument gives the claim in the full $B_{1/2}.$

We are left with the proof of \eqref{boundary}. From assumption \eqref{control} and \eqref{negative} we have that 
\be\label{negative2}\tilde v_{\frac{c_1}{8}\eps}<0 \leq \tilde u \quad \text{on $\p R_\gamma \setminus \{x_n=\gamma, |x'|\leq 1/2\}.$}\ee

Moreover, by Harnack inequality and assumption \eqref{u-p>ep2}, one can guarantee that 
$$u(x)-V(x) \geq c_0 \eps \quad \text{on $ \{x_n=\gamma, |x'|\leq 1/2\},$}$$
for some $c_0(\alpha)$ universal. Hence, 
\be\label{almost} \tilde u \geq c_0 \geq 2 c_1\sup W   \quad \text{on $ \{x_n=\gamma, |x'|\leq 1/2\},$}\ee
if $c_1$ is chosen appropriately. Again, from the uniform convergence of $\tilde v_{\frac{c_1}{8} \eps}$ we obtain 
$$\tilde u \geq \tilde v_{\frac{c_1}{8}\eps}  \quad \text{on $ \{x_n=\gamma, |x'|\leq 1/2\},$}$$ and our claim is proved.

\end{proof}

{\it Proof of Proposition $\ref{c1}.$} The proof follows the line of Lemma 5.1 in \cite{DFS}. For the reader convenience we sketch the details. 

\smallskip

We divide the proof into 3 steps.

 \

\textbf{Step 1 -- Compactness.} Fix $r_1$ universal to be made precise later in Step 3. Assume by contradiction that we
can find sequences $\eps_k \rightarrow 0, \delta_k \to 0$ and a sequence $u_k$ of
solutions to \eqref{fb} in $B_1$ for a sequence of operators $\mathcal F_k \in \mathcal E(\lambda,\Lambda)$ and free boundary conditions $G_k$ such that
\begin{equation}\label{flat_k}|u_k -V_{a^\pm_k, M_k, e_n}| \leq \eps_k \quad \text{for $x \in B_1$,  $0 \in \Gamma(u_k),$}
\end{equation} with 
$$\|M_k\| \leq \delta_k \eps_k^{1/2}, \quad M_k e_n=0, \quad a^+_k=G_k(a^-_k), \quad a^\pm_k \in [1/2,2],$$
but $u_k$ does not satisfy the conclusion \eqref{flat2flat2} of the proposition.

Let $\tilde u_k := \tilde u_{a^\pm_k,M_k, e_n, \eps_k}$ be defined as in \eqref{utilde}.
Then \eqref{flat_k} gives,
\begin{equation}\label{flat_tilde}|\tilde{u}_{k}|\leq C
\quad \text{for $x \in B_1$}.
\end{equation}

From Lemma \ref{HI} we obtain that the oscillation of $\tilde u$ decreases by a factor $1-c$ as we restrict from $B_1$ to $B_{1/2}$. This result can be iterated $m$-times provided that $\eps_k (2(1-c))^m \le \bar \eps$. By Ascoli-Arzela theorem it follows that, as $\eps_k \rightarrow 0$,
 $\tilde{u}_{k}$  (up to a subsequence)  converge uniformly in $B_{1/2}$ to a H\"older
continuous function $u^*$. Also, up to a subsequence
$$G_k \to G^* \quad \text{uniformly in $C^1([0,2])$}, \quad a^-_k \to \bar a^- \in [1/2,2]$$ and hence $$a^+_k \to \bar a^+ =  G^*(\bar a^-).$$
Notice that $0 \in \Gamma(u_k)$ implies $\tilde u_k(0)=0$ hence $u^*(0)=0$.

\

\textbf{Step 2 -- Limiting Solution.} We now show that $u^*$
solves
\begin{equation}\label{Neumann}
  \begin{cases}
    {\mathcal{F}}^*(D^2u^*)=0 & \text{in $B_{1/2} \cap \{x_n \neq 0\}$}, \\
\ \\
a (u^*_n)^+ - b (u^*_n)^-=0 & \text{on $B_{1/2} \cap \{x_n =0\}$},
  \end{cases}\end{equation}
  with ${\mathcal F}^* \in \mathcal{E}(\lambda,\Lambda)$ concave, and $a=\bar a^+, b=\bar a^- {G^*}'(\bar a^-).$

Set,
\be\label{f*}
{\mathcal{F}}^*_k(N)=\frac{1}{\epsilon_k}\mathcal{F}_k(\epsilon_k N + M_k).
\ee
Then (recall that $\mathcal F_k(M_k)=0$), ${\mathcal{F}}^*_k \in \mathcal{E}(\lambda,\Lambda)$ and it is concave. Thus, up to extracting a subsequence, $${\mathcal F}^*_k \to {\mathcal F}^*, \quad \text{uniformly on compact subsets of $\mathcal S^{n \times n}.$} $$
Moreover, since $\mathcal F_k$ is homogeneous of degree 1, 
  $$
 { \mathcal{F}}^*_k(D^2\tilde{u}_k)= \mathcal F^*_k \left (\frac{1}{a_k^+ \eps_k}D^2 u_k - \frac{1}{\eps_k}M_k\right)= \frac{1}{a_k^+ \eps_k} \mathcal F_k (D^2 u_k) = 0,\quad \mbox{in} \quad B_1^+(u_k),
  $$
  and similarly ${ \mathcal{F}}^*_k(D^2\tilde{u}_k)=0$ in $B_1^-(u_k)$.
 Then, by standard arguments (see Proposition 2.9 in \cite{CC}), we conclude that 
$${\mathcal{F}}^*(D^2 u^*)=0 \quad \text{in  $B_{1/2}\cap \{x_n \neq 0\}$}.$$
Next we verify the transmission condition in the viscosity sense of Definition \ref{def_visc}. Let 
$$w(x):= A+ px_n^+-qx_n^-+ x^T N x+ \xi' \cdot x'$$
with $A \in \R$, $N$ a diagonal matrix with $\mathcal M^-(N)>0,$
and 
$$ap-bq>0.  $$
Assume that $w$ touches $u^*$ strictly by below at a point $x_0=(x_0',0) \in B_{1/2}.$  Set,
$$\bar a^+_k=a^+_k(1+\eps_k p), \quad \bar a^-_k=a^-_k(1+\eps_k q)$$
$$\bar M_k=M_k +2\eps_k N, \quad Q_k:=P_{\bar M_k, e_n} + \eps_k  \xi'\cdot x'+ A\eps_k,$$
and
$$V_k:=\bar a^+_k Q_k^+ - \bar a^-_k Q_k^-.$$

Recall that $\tilde V_k$ converges uniformly to $w$ on $B_{1/2}$ (see Remark \ref{rem_w}). Since $\tilde u_k$ converges uniformly to $u^*$ and $w$ touches $u^*$ strictly by below at $x_0$, we conclude that for a sequence of constants $c_k \to 0$ and points $x_k \to x_0$ the function
$$w_k:= V_k(x_k + \eps_k c_k e_n)$$
touches $u_k$ by below at $x_k$. Proposition \ref{sub_1} gives that $w_k$ is a strict subsolution to our free boundary problem, provided that we first choose $\delta$ small enough, so to guarantee that \eqref{sub_2} holds. We reach a contradiction as we let $k \to \infty$,
thus $u^*$ is a solution to the linearized problem \eqref{Neumann}.

\

\textbf{Step 3 -- Contradiction.} Since $\tilde u_k$ converges uniformly to $u^*$ and $u^*$ enjoys the $C^{1,\alpha}$ estimate of Theorem \ref{lineareg} we have 
\be\label{uu}|\tilde u_k - (x' \cdot \nu'+ \tilde p x_n^+ - \tilde q x_n^-)| \leq C r^{1+\alpha}, \quad x \in B_{r},\ee
with $$a\tilde p-b\tilde q=0, \quad |\nu'| \leq C.$$
Call, 
$$\bar a_k^-=a_k^-(1+\eps_k \tilde q), \quad \bar a_k^+=G_k(\bar a_k^-)=a_k^+(1+\eps_k \tilde p) + O(\eps_k^2),$$ and
$$\nu_k= \frac{e_n+\eps_k(\nu',0)}{\sqrt{1+\eps_k^2|\nu'|^2}}= e_n+\eps_k(\nu',0)+ \eps_k^2\tau, \quad |\tau|\leq C.$$
We claim that we can decompose
\be\label{barnk}M_k= \bar M_k + D_k \ee with 
$$\mathcal F_k (\bar M_k)=0, \quad \bar M_k \nu_k =0, \quad \|D_k\| \leq C \|M_k\||\nu_k-e_n|,$$
hence $\|D_k\|= O(\eps_k^{3/2}).$

Indeed, since $M_k e_n=0$ first we can decompose 
$$M_k= \tilde M_k+L_k,$$ such that $M_k \nu_k=0,$ and $$\|L_k\| \le C \|M_k\|\, |\nu_k - e_n|.$$ 
Since $\mathcal F_k(M_k)=0$ we obtain that 
$$\mathcal F_k(\tilde M_k)= O(\| L_k\|).$$
Then, using ellipticity we can decompose $\tilde M_k$ further, (here $\nu_k^\perp$ is a unit vector perpendicular to $\nu_k$)
$$\tilde M_k=\bar M_k + t_k (\nu_k^\perp \otimes \nu_k^\perp), \quad t_k =O(\mathcal F_k(\tilde M_k)), \quad \bar M_k \nu_k=0, $$
so that 
$$\mathcal F_k(\bar M_k)=0,$$
and the claim \eqref{barnk} is proved.
%Choose $r_1$ so that $$C r_1^{1+\alpha} \leq \frac 1 8 r_1^{1+\alpha_1},$$ say $\alpha_1=\alpha/2.$

Let us show now that for $r = r_1$ universal, (say $\alpha_1= \alpha/2$)
$$|u_k - V_{\bar a_k^\pm, \bar M_k, \nu_k}| \leq \eps_k r^{1+\alpha_1},$$
which contradicts the fact that $u_k$ does not satisfy \eqref{flat2flat2}.
 From \eqref{uu} and the definition of $\tilde u_k$ we get that in $B_r \cap (B_1^+(u_k) \cup \Gamma(u_k))$ (we can argue similarly in the negative part)
$$|u_k -a_k^+P_{M_k,e_n} -\eps_k a^+_k(x' \cdot \nu'+ \tilde p x_n^+ - \tilde q x_n^-)| \leq 2\eps_k C r^{1+\alpha}.$$
Since in this set $x_n \geq - 3 \eps_k^{1/2}$ we conclude that for all $k$ large 
$$|u_k -a_k^+P_{M_k,e_n} -\eps_k a^+_k(x' \cdot \nu'+ \tilde p x_n)| \leq 3\eps_k C r^{1+\alpha},$$
which gives
$$|u_k - \bar a_k^+(x \cdot \nu_k + \frac 12 x^T M_k x)| \le 4\eps_k C r^{1+\alpha}.$$
Finally, from \eqref{barnk} we conclude that 
$$|u_k - V_{\bar a_k^\pm, \bar M_k, \nu_k}| \leq 5C\eps_k r^{1+\alpha},$$
from which the desired bound follows for $r_1$ small enough.
\qed

\subsection{Nonlinear Dichotomy} In this subsection, we prove the Proposition \ref{Nonlinear_intro}. 
The proof is very similar to the one of Proposition \ref{c1} above except that we use Proposition \ref{dic} for the limiting transmission problem. 

\begin{proof}
\textbf{Step 1 -- Compactness and Limiting solution.} Fix $r_0$ universal  to be specified later. Assume by contradiction that we
can find sequences $\eps_k \rightarrow 0, \delta_k \to 0$ and a sequence $u_k$ of
solutions to \eqref{fb} in $B_1$ for a sequence of operators $\mathcal F_k \in \mathcal E(\lambda,\Lambda)$ and free boundary conditions $G_k$ such that
\begin{equation*}\label{flat_kk}|u_k -P_{M_k, e_n}| \leq \eps_k \quad \text{for $x \in B_1$,  $0 \in \Gamma(u_k),$}
\end{equation*} with 
$$\|M_k\| \leq \delta_k \eps_k^{1/2}, \quad M_k e_n=0, \quad \mathcal F_k(M_k)=0,$$
but $u_k$ does not satisfy either of the alternatives \eqref{flat2} (for some small constant $c_0'$ to be specified later) or \eqref{flat3} .

Let $\tilde u_k := \tilde u_{a^\pm_k,M_k, e_n, \eps_k}$ be defined as in \eqref{utilde}.
In Step 1- Step 2 of the proof of Proposition \ref{c1} we showed that 
 as $\eps_k \rightarrow 0$, $\tilde{u}_{k}$ converge uniformly (up to a subsequence) in $B_{1/2}$ to a H\"older
continuous function $u^*$ with $u^*(0)=0$, $\|u\|_{L^\infty} \le C$ and $u^*$ solves the transmission problem
\begin{equation}\label{Neumann_2}
  \begin{cases}
    {\mathcal{F}}^*(D^2u^*)=0 & \text{in $B_{1/2} \cap \{x_n \neq 0\}$}, \\
\ \\
(u^*_n)^+ - b (u^*_n)^-=0 & \text{on $B_{1/2} \cap \{x_n =0\}$},
  \end{cases}\end{equation}
  with ${\mathcal F}^* \in \mathcal{E}(\lambda,\Lambda)$ concave, and $b>0$ bounded by universal constants.

Next we show that $u^*$ is monotone decreasing in the $e_n$ direction,
\be\label{monotone} {(u_n^*)}^\pm \leq 0.\ee
Indeed, $\tilde u_k$ satisfies $\mathcal F^*_k(D^2 \tilde u_k)=0$ in $B_1^+(u_k)$ and $\Gamma(u_k) \subset \{|x_n| \leq  \eps_k^{1/2}\}.$ In particular, if $\bar x \in \{x_n > 0\}$, for $k$ large enough
$$B:=B_{\bar x_n/2}(x) \subset B_1^+(u_k)$$
and
$$|\nabla \tilde u_k| \leq C(\bar x_n) \quad \text{in $B$}.$$  Hence, 
$$\nabla u_k(\bar x)= \nabla(P_{M,e_n} + \eps_k \tilde u_k )(\bar x) = e_n + Mx + \eps_k \nabla \tilde u_k(\bar x).$$
Using that $|\nabla u_k(\bar x)|^2 \leq 1$ and $Me_n=0$ we get 
$$1 \geq 1+ 2 \eps_k (\tilde u_k)_n + O(\eps_k^{3/2}),$$ where the constant is $O(\eps_k^{3/2})$ depends on $\bar x_n.$
By $C^{1,\alpha}$ estimates the $\tilde u_k$'s converge to $u^*$ in $C^1$ in the ball $B$.
Passing to the limit as $k \to \infty$, we get that 
$$u^*_n(\bar x) \leq 0.$$
By continuity, since $u^*$ is $C^{1,\alpha}$ up to $\{x_n=0\}$ we conclude that
$$(u^*_n)^+ \leq 0 \quad \text{in $B_1 \cap \{x_n \geq 0\}$}.$$
We argue similarly for $(u^*_n)^-.$

\medskip

{\bf Step 2 -- Contradiction.} According to Proposition \ref{dic} (since \eqref{monotone} holds and $u^*$ is bounded by a universal constant) there exist universal constants $c_0, r_0>0$ such that either of the following alternative is satisfied:
\begin{enumerate}
\item \be\label{1_1}(u^*)^+_n(0) \leq -c_0\ee 
 \item \be \label{2_2} |u^*(x)- Q(x')| \leq \frac 1 4 r_0^2, \quad \text{in $B_{r_0}$}\ee
 \end{enumerate}
 where $Q(x')$ is a quadratic polynomial in the $x'$ direction with $\|Q\| \leq C$ universal and
 \be\label{fq} \mathcal F^*(D^2Q)=0.\ee
 Since $u^*(0)=0$ we have
 $$Q(x')= \xi' \cdot x' + \frac 1 2 x^T N x, \quad Ne_n=0, \quad \mathcal F^*(N)=0.$$

If \eqref{1_1} is satisfied, by the $C^{1,\alpha}$ estimates for $u^*$ we obtain that
$$|u^* - (\xi' \cdot x' + p x_n^+- q x_n^-)| \leq C r^{1+\alpha}, \quad \text{in $B_r$}$$
with $|p|, |q|, |\xi'| \leq C$ and 
$$p - bq=0, \quad p \leq -c_0.$$
Using the convergence of the $\tilde u_k$ to $u^*$ and the definition of $\tilde u_k$ we immediately get that for $k$ large, 
$$|u_k - (P_{M_k, e_n}+ \eps_k(\xi' \cdot x'+p x_n^+- q x_n^-)|\leq C \eps_k r^{1+\alpha} \quad \text{in $B_r$}.$$
Arguing as in Step 3 of the proof of Proposition \ref{c1} (using the same notation) we conclude that 
for $r \leq r_1$, 
$$|u_k - V_{\bar a_k^\pm, \bar M_k, \nu_k}| \leq \eps_k r^{1+\alpha_1} \quad \quad \mbox{with} \quad \bar a^+=1+\eps_k p.$$
Thus, after rescaling (see the argument after the statement of Proposition \ref{c1}), Corollary \ref{maincor} gives that
$$|\nabla u_k^+ (0)| \leq 1+p\eps_k + C \eps_k r^{\alpha_1} \leq 1- \frac 12 c_0 \eps_k$$
provided that $r$ is chosen small, universal. Thus $u_k$ satisfies alternative \eqref{flat2} and we have reached a contradiction.

If \eqref{2_2} is satisfied,  using the convergence of the $\tilde u_k$ to $u^*$ and the definition of $\tilde u_k$ we get
\be\label{almost_1}|u_k - (x_n + \eps_k \xi' \cdot x' + \frac 1 2 x^T N^*_k x)| \leq \frac {1}{4} \eps_k r_0^2\ee
with
$$N^*_k=M_k+ \eps_k N, \quad N^*_k e_n=0, \quad \mathcal F_k(N^*_k)=o(\eps_k),$$
where the last equality follows from (see \eqref{f*}) 
$$\mathcal F_k(N^*_k)=\eps_k \mathcal F_k^*(N) \quad \mbox{and} \quad \mathcal F^*_k(N) \to \mathcal F^*(N)=0.$$
As before denote 
$$\nu_k= \frac{e_n+\eps_k(\xi',0)}{\sqrt{1+\eps_k^2|\xi'|^2}}= e_n+\eps_k(\xi',0)+ \eps_k^2\tau, \quad |\tau|\leq C.$$

We argue as in Step 3 of Proposition \ref{c1} and decompose
$$N^*_k= \bar M_k+D_k, \quad \bar M_k  \nu_k=0, \quad \|D_k\|=o(\eps_k)$$
with
$$\mathcal F_k(\bar M_k)=0.$$
Thus, \eqref{almost_1} yields for $k$ large,
$$|u_k - P_{\bar M_k, \nu_k}| \leq \frac 1 2 \eps_k r_0^2 \leq \eps_k r_0 ^{2+\alpha_0}$$
for $\alpha_0$ small enough, and again we reached a contradiction.

\end{proof}

\subsection{Proof of Theorem \ref{T2.1}} Here we only sketch the proof and we outline the differences between the case of quasilinear equations 
\be\label{aij}
Q(u):=a_{ij}\left(\frac{\nabla u}{|\nabla u|} \right) \, \, u_{ij}=0,\ee
and the fully nonlinear concave equations case treated in Theorem \ref{T2}. 

We need to check that Lemma \ref{uxn}, Corollary \ref{eeta}, and Propositions \ref{Nonlinear_intro} and \ref{c1} apply to the quasilinear setting. 

The proofs of Lemma \ref{uxn} and Corollary \ref{eeta} are identical since Lipschitz solutions to \eqref{aij} in $B_1$ satisfy interior $C^2$ estimates. This means that in any compact set of $B_1$, directional derivatives $u_e$ solve linear elliptic equations (with first order terms) and bounded coefficients. 

For the proofs of Propositions \ref{Nonlinear_intro} and \ref{c1} we introduce the approximate quadratic polynomials in this setting. Let 
$$P_{M,\nu}:= x \cdot \nu + \frac 1 2 x^T M x,$$
such that $M$ and $\nu$ satisfy the compatibility conditions 
$$a_{ij}(\nu)m_{ij}=0, \quad \quad M \nu=0.$$
Also let $V_{a^\pm,M,\nu}$ and $\tilde u_{a^\pm,M,\nu}$ be defined as in the beginning of Section 5. One can argue as in the proofs above and check that as $\delta,\eps \to 0$ a sequence of $\tilde u$'s converges uniformly to a solution $u^*$ of the transmission problem \eqref{Neumann} with $$\mathcal F^*(D^2 u^*)=a_{ij}(\nu) u^*_{ij}=0.$$
Since $\mathcal F^*$ is linear, the results Theorem \ref{lineareg} and Proposition \ref{dic} from Section 4 apply, and the rest of the arguments follow as in the proofs above.

\qed
 
We remark that the proof of Theorem \ref{T2.1} above is easier than the one of Theorem \ref{T2}. This is because the key results for the transmission problem in Section 4 are straightforward in the case of linear operators $\mathcal F$. 
In fact it can be shown that solutions to a linear transmission problem are of class $C^{\infty}(B_1^\pm)$. It follows that the two-phase problem governed by \eqref{aij} enjoys a quadratic 
improvement of flatness property and then Propositions \ref{Nonlinear_intro} and \ref{c1} can be easily deduced.


\begin{thebibliography}{9999}
\bibitem[ACF]{ACF} Alt H.W., Caffarelli L.A., Friedman A., \emph{Variational problems with two phases and their free boundaries. }Trans. Amer. Math. Soc. {\bf282} (1984), no. 2, 431--461.
\bibitem[C2]{C2} Caffarelli L.A., {\it A Harnack
inequality approach to the regularity of free boundaries. Part II:
Flat free boundaries are Lipschitz}, Comm. Pure Appl. Math.
\textbf{42} (1989), no.1, 55--78.

\bibitem[CC]{CC} Caffarelli L.A., Cabre X., {\it Fully Nonlinear Elliptic
Equations}, Colloquium Publications 43, American Mathematical
Society, Providence, RI, 1995.
\bibitem[CDS]{CDS} Caffarelli L.A., De Silva D., Savin O., \emph{Two-phase anisotropic free boundary problems and applications to the Bellman equation in 2D}, submitted.

 \bibitem[CJK]{CJK} Caffarelli L.A., Jerison D., Kenig C. {\it Global energy minimizers for free boundary problems and full regularity in three dimension}, Contemp. Math.,
350, Amer. Math. Soc., Providence, RI (2004), 83--97.
 \bibitem[DFS]{DFS} De Silva D., Ferrari F.,  Salsa S.,  {\it Free boundary regularity for fully nonlinear non-homogeneous two-phase problems,} J. Math. Pures Appl. (9) 103 (2015), no. 3, 658--694. 
 \bibitem[DS]{DS} De Silva D., Savin O., {\it Lipschitz regularity of solutions to two-phase free boundary problems}, submitted.

 \bibitem[DK]{DK} Dipierro S., Karakhanyan A.,  {\it Stratification of free boundary points for a two-phase variational problem},  arXiv:1508.07447.
 \bibitem[F1]{F1} Feldman M., {\it Regularity of Lipschitz free boundaries in two-phase problems for fully nonlinear elliptic equations}, Indiana Univ. Math. J. 50 (2001), no.3, 1171--1200
 \bibitem[Fe1]{Fe1} Ferrari F., {\it Two-phase problems for a class of fully nonlinear elliptic operators, Lipschitz free boundaries are $C^{1,\alpha}$},
Amer. J. Math. 128 (2006), 541--571.
\bibitem[LW]{LW} Lederman C., Wolanski N.,  {\it A two phase elliptic singular perturbation problem with a forcing term,} J. Math. Pures Appl. (9) {\bf 86} (2006), no. 6, 552--589.
\bibitem[MP]{MP} Matevosyan N., Petrosyan A., {\it Almost monotonicity formulas for elliptic and parabolic operators with variable coefficients}, Comm. Pure Appl. Math 44 (12011), 271-311.
\bibitem[W1]{W1}  Wang P.Y., {\it Regularity of free boundaries of two-phase problems for fully nonlinear elliptic equations of second order. I. Lipschitz free boundaries are $C^{1,\alpha}$}, Comm. Pure Appl. Math. 53 (2000), 799--810.
 \bibitem[W2]{W2}  Wang P.Y., {\it Regularity of free boundaries of two-phase problems for fully nonlinear elliptic equations of second order. II. Flat free boundaries are Lipschitz}, Comm. Partial Differential Equations 27 (2002), 1497--1514.


\end{thebibliography}
\end{document}